\documentclass{amsart}

\usepackage[arrow, matrix, curve]{xy}
\usepackage{verbatim}

\newtheorem{thm}{Theorem}[section]
\newtheorem{lem}[thm]{Lemma}
\newtheorem{cor}[thm]{Corollary}

\theoremstyle{definition}
\newtheorem{dfn}[thm]{Definition}

\newtheorem{rem}[thm]{Remark}
\newtheorem{nte}[thm]{Note}

\theoremstyle{remark}

\newtheorem*{exl}{Example}
\newtheorem*{red}{Reminder}

\newcommand{\N}{\mathbb{N}}
\newcommand{\R}{\mathbb{R}}
\newcommand{\Z}{\mathbb{Z}}
\newcommand{\F}{\mathbb{F}}
\newcommand{\To}{\rightarrow}

\newcommand{\MTo}{\mapsto}

\newcommand{\mr}{\mathrm}
\newcommand{\f}{\mathcal{F}}

\newcommand{\mf}{\mathfrak}

\begin{document}

\title{Taut Submanifolds and Foliations}
\author{Stephan Wiesendorf}

\address{Stephan Wiesendorf\\
Mathematisches Institut, Universit\"at zu K\"oln, Weyertal 86-90, 50931 K\"oln, Germany}
\email{swiesend@math.uni-koeln.de}

\footnote{2010 \emph{Mathematics Subject Classification.} Primary 53C42, Secondary 53C12. }

\begin{abstract}
We give an equivalent description of taut submanifolds of complete Riemannian manifolds as exactly those submanifolds whose normal exponential map has the property that every preimage of a point is a union of submanifolds. It turns out that every taut submanifold is also $\mathbb Z_2$-taut. We explicitely construct generalized Bott-Samelson cycles for the critical points of the energy functionals on the path spaces of a taut submanifold which, generically, represent a basis for the $\mathbb Z_2$-cohomology. We also consider singular Riemannian foliations all of whose leaves are taut. Using our characterization of taut submanifolds, we are able to show that tautness of a singular Riemannian foliation is actually a property of the quotient.
\end{abstract}

\maketitle

\tableofcontents

\section{Introduction}
\label{sec:Intro}
The terminology of taut submanifolds was introduced by Carter and West in \cite{CW}, where they call a submanifold $L$ of a Euclidean space $V$ \emph{taut} if all the squared distance functions $d^2_q: L \rightarrow \mathbb R$, $d^2_q(p) = \|p -q\|^2$, with respect to points $q \in V$ that are not focal points of $L$ are perfect Morse functions for some field $\mathbb F$, i.e. if the number of critical points of index $k$ of $d^2_q$ coincides with the $k$-th Betti number of $L$ with respect to the field $\mathbb F$ for all $k$. If $L$ is taut and $\mathbb F$ is a field as in the defintion of tautness, then $L$ is also called $\mathbb F$-\emph{taut}.
Thus, geometrically, taut submanifolds are as round as possible. If one tries to generalize this defintion to submanifolds of arbitrary Riemannian manifolds, the problem arises that the squared distance function is not a priori everywhere smooth anymore. Namely, it is not differentiable in the intersection of the cut locus of the respective point $q$ with the submanifold.
 
Using different approaches, Grove and Halperin \cite{GH} and, independently, Terng and Thorbergsson \cite{TT} generalized this notion to submanifolds $L$ of complete Riemannian manifolds $M$ by saying that $L$ is taut if there exists a field $\mathbb F$ such that every energy functional $E_q(c) = \int_{[0,1]}\|\dot c(t)\|^2dt$ on the space $\mathcal P(M,L \times q)$ of $H^1$-paths $c: [0,1] \rightarrow M$ from $L$ to a fixed point $q \in M$ is a perfect Morse function with respect to $\mathbb F$ if $q$ is not a focal point of $L$. The critical points of $E_q$ are exactly the geodesics that start orthogonally to $L$ and end in $q$. In particular, in the case where $M = V$ is a Euclidean space, there is an obvious way to identify a submanifold $L$ with the space of segments in $\mathcal P(V,L \times q)$ and under this identification the map $d_q^2$ corresponds to $E_q$. Further, it is not hard to see that in this case the path space $\mathcal P(V,L \times q)$ admits the subspace of segments from $L$ to $q$ as a strong deformation retract. So the definitions agree for submanifolds of a Euclidean space and it turns out that this is indeed the right way to generalize tautness.

It is shown in \cite{TT} that if $L \hookrightarrow M$ is an $\mathbb F$-taut submanifold, then the energy functionals $E_q$ are Morse-Bott functions for all points $q \in M$. Our first main result now states that this property actually characterize taut submanifolds.\\

\noindent \textbf{Theorem A} 
\emph{A closed submanifold $L$ of a complete Riemannian manifold $M$ is taut if and only if all the energy functionals $E_q: \mathcal P(M,L \times q) \rightarrow \mathbb R$ are Morse-Bott functions.}\\

In fact, if all the energy functionals are Morse-Bott functions, then the field with respect to which $L$ is taut is $\mathbb Z_2$. Thus, as a direct consequence, we obtain the following result, which was, just as Theorem A, so far not even known in the case of a Euclidean space.\\

\noindent \textbf{Theorem B}
\emph{If a submanifold is $\mathbb F$-taut, then it is also $\mathbb Z_2$-taut.}\\

Based on this result it is reasonable to consider only $\mathbb Z_2$-taut submanifolds, so that we no longer distinguish between $\mathbb Z_2$-taut and taut.\\  

As the definition shows, tautness is a very special property. In some sense, it is a kind of homogeneity requirement for the pair $(M,L)$. So it is no surprise that so far not many examples of taut submanifolds are known. This makes it all the more remarkable that taut submanifolds, if at all, often occur in families which then decompose the ambient space, e.g. an orbit decomposition induced by the isotropy representation of a symmetric space. It is for this reason that we study such families as they usually appear, i.e. singular Riemannian foliations with only taut leaves. We call such families \emph{taut foliations}. As a main result in this direction we observe that tautness of a foliation is indeed a property of the quotient of the foliation, so that it actually makes sense to talk about taut quotients as equivalence classes of quotients under isometries.\\

\noindent \textbf{Theorem C}
\emph{Let $\mathcal F$ and $\mathcal F'$ be closed singular Riemannian foliations on complete Riemannian manifolds $M$ and $M'$ such that their quotients $M/\mathcal F$ and $M'/\mathcal F'$ are isometric. Then $\mathcal F$ is taut if and only if $\mathcal F'$ is taut.}\\

Due to this result one could think about taut foliations as foliations with \emph{pointwise taut} quotients, where we follow \cite{Lei} and call a manifold pointwise taut if all of its points are taut (submanifolds). Of course, in general a quotient of a singular Riemannian foliation is far from being a manifold, but as soon as it is a nice space in the sense that one could use differential geometric methods it turns out that this picture is reasonable. Viewed in this light, the largest class of spaces for which one has the appropriate tools available is the class of Riemannian orbifolds, i.e. spaces locally modeled by quotients of Riemannian manifolds modulo the action of a finite group of isometries. Now, given a taut singular Riemannian foliation $\mathcal F$ on $M$ such that the quotient $M/\mathcal F$ is an orbifold, it follows that $M/\mathcal F$ is already a good Riemannian orbifold, that is to say $M/\mathcal F$ is isometric to $N/\Gamma$, where $N$ is a Riemannian manifold and $\Gamma \subset \mr{Iso}(N)$ is a discrete group of isometries. This observation together with the last theorem leads directly to our next result, which mainly motivates our picture of pointwise taut quotients.\\

\noindent \textbf{Theorem D}
\emph{Let $\mathcal F$ be a closed singular Riemannian foliation on a complete Riemannian manifold $M$. Then $\mathcal F$ is taut and $M/\mathcal F$ is an orbifold if and only if $M/\mathcal F$ is isometric to $N/\Gamma$, where $N$ is a pointwise taut Riemannian manifold and $\Gamma \subset \mr{Iso}(N)$ is a discrete group of isometries of $N$.}\\

In view of applications, the more interesting direction of this result is that the existence of a pointwise taut quotient covering implies tautness of the foliation. The known examples of pointwise taut Riemannian manifolds are mainly two classes of spaces, together with Riemannian products of elements of these classes and their subcoverings. The first one is the class of symmetric spaces, which are pointwise taut by the work of Bott and Samelson \cite{BS} and the second class consists of manifolds without conjugate points, e.g. manifolds with non-positive curvature. In fact, if there are no conjugate points along any geodesic in a Riemannian manifold, the index of every critical point of a given energy functional is zero, hence all points in such a manifold are taut. A conjecture in \cite{TT} states that a compact pointwise taut Riemannian manifold that has the homotopy type of a compact symmetric space is symmetric. We want to mention as an aside that it is shown in \cite{TT} that in the case of a compact rank-one symmetric space this conjecture is equivalent to the Blaschke conjecture which is still not settled. 

It is therefore not a surprise that in all known examples of taut foliations with orbifold quotients these quotients are isometric to a space $(N \times P)/\Gamma$, where $N$ is a symmetric space, $P$ is a manifold without conjugate points, and $\Gamma$ is a discrete subgroup of isometries. Consider, for instance, the parallel foliation $\mathcal F$ of a Euclidean space $V$ that is induced by an isoparametric submanifold $L$ of $V$. Such a foliation is a singular Riemannian foliation and is also called an \emph{isoparametric foliation}. In this case, the quotient $V/\mathcal F$ is isometric to $(p + \nu_p(L))/\Gamma$, where $p \in L$ is some point and $\Gamma$ is the finite Coxeter group generated by the reflections across the focal hyperplanes in $p + \nu_p(L) \subset V$. So $V/\mathcal F$ admits a flat orbifold covering which is a manifold, thus $\mathcal F$ is taut. In particular, our result implies that isoparametric submanifolds are taut, what is well known by \cite{HPT}. More generally, if a closed singular Riemannian foliation $\f$ on a Riemannian manifold $M$ is polar, i.e. through every regular point in $M$ there exists a complete submanifold meeting all the leaves and always perpendicularly, every section covers the orbit space $M/\f$ (as an orbifold). We therefore see again that the orbits of hyperpolar actions, i.e. when the sections are flat, are taut. Since sections are always totally geodesic and totally geodesic submanifolds of symmetric spaces are symmetric spaces, we also reobtain the result from \cite{BG} that a polar action of a compact Lie group on a compact rank-one symmetric space is taut. In \cite{GT} Gorodski and Thorbergsson classified all taut irreducible representations of compact Lie groups as either hyperpolar and hence equivalent to the isotropy representation of a symmetric space or as one of the exceptional representations of cohomogeneity three. Let $\rho: G \rightarrow \textbf{O}(V)$ be an exeptional representation, i.e. the induced action of $G$ on $V$ has cohomogeneity equal to three. Then, the restriction of this action on the unit sphere $S \subset V$ has cohomogeneity two, so that $S/G$ is isometric to a quotient $S^2/\Gamma$ of the round 2-sphere with a finite Coxeter group $\Gamma$. Since the orbits of the $G$-action on $V$ are taut if and only if the orbits of the $G$-action on $S$ are taut, it follows from Theorem D again that the exeptional representations are taut.\\

Practically, the only way to prove that a given submanifold $L \hookrightarrow M$ is taut is the explicit construction of so called \emph{linking cycles} for the energy. Namely, one has to find cycles that complete the local unstable manifolds associated to some Morse chart around the critical points below the corresponding critical energy. This concept is explained in Section \ref{sec:LC}.
For the proof of Theorem A in Section \ref{sec:Des} (cf. Theorem \ref{thm:A}) we therefore first make the easy observation that all the energy functionals $E_q : \mathcal P(M,L \times q) \rightarrow \mathbb R$ are Morse-Bott if and only if the normal exponential map $\exp^{\perp}:\nu(L) \rightarrow M$ has \emph{integrable fibers}, by what we mean that $(\exp^{\perp})^{-1}(\exp^{\perp}(v))$ is a union of submanifolds for all $v \in \nu(L)$. If so, we explicitely construct linking cycles for non-degenerate critical points, i.e. a basis for the (co-)homology of $\mathcal P(M,L \times q)$ if $q \in M$ is not a focal point, proving that $L$ is taut. For this purpose, for a normal vector $v \in \nu(L)$, we define $Z_v$ to be the set of all piecewise continuous paths $c :[0,1] \rightarrow \nu(L)$ obtained as follows. Follow the segment $t v$ towards the zero section up to the first focal vector $t_1v$, then take an arbitrary normal vector $v_1$ in the fiber of $\exp^{\perp}$ through $t_1v$ and follow the segment $tv_1$ towards the zero section up to first focal vector $t_2v_1$, then take an arbitrary normal vector in the fiber through $t_2v_1$ and repeat this procedure. By construction, for every $c \in Z_v$, $\exp^{\perp} \circ c$ is a broken geodesic from $\exp^{\perp}(v)$ to $L$ and we define the space $\Delta_v \subset \mathcal P(M,L \times \exp^{\perp}(v))$ to consist of all broken geodesic obtained in this manner reparameterized on $[0,1]$ after reversing the orientation. If the occurring focal multiplicities are locally constant, then $\Delta_v$ is an iterated fiber bundle and thus a compact manifold of the right dimension. In this case, its fundamental class ensures that it indeed provides a linking cycle. In the general case, $\Delta_v$ is a ``fiber bundle'' with singularities. Using sheaf cohomology we prove that it still carries a ``fundamental class'' and therefore actually represents a linking cycle. As mentioned above, Theorem B is then a direct consequence of this result.

In the second section we recapitulate the notion of singular Riemannian foliations and make some preliminary observations about taut foliations. As the main property, a geodesic either meets the leaves of a singular Riemannian foliation orthogonally at all or at none of its points. If a geodesic intersects one and hence all leaves perpendicularly, it is called \emph{horizontal}. Roughly speaking, the possibilities to vary a horizontal geodesic through horizontal geodesics consist of variations of the projection of the geodesic to the quotient and of variations through horizontal geodesics all of which meeting the same leaves simultaneously. This results in an index splitting for horizontal geodesics into \emph{horizontal} and \emph{vertical} index that we discuss in Section \ref{sec:HGI}, the latter one counting the intersections with the \emph{singular leaves} (with their multiplicities). In \ref{sec:PQ} we then prove Theorem C using Theorem A and the fact that the horizontal index is an intrinsic notion of the quotient (cf. Theorem \ref{thm:B}). Using the arguments from our proof of Theorem C, we are able to give a general  construction to obtain lots of examples of taut submanifolds including all the known examples that occur in families. Finally, Theorem D is then proved as a special case of Theorem C (cf. Theorem \ref{thm:infpol}).

At the end of this section we recall some basic facts about \emph{infinitesimally polar foliations}, i.e. those foliations whose quotients are orbifolds, from \cite{LT} and reformulate our Theorem D for those foliations. Infinitesimally polar foliations can also be described as those foliations admitting a \emph{canonical geometric resolution} (cf. \cite{L}), that is to say a canonically related regular foliation with an isometric quotient. As a consequence of our results we therefore observe that the canonical resolution of an infinitesimally polar foliation is taut if and only if the foliation is taut.\\

Our proof of Theorem A (as well as of Theorem C) may be viewed as a generalization of the construction of Bott and Samelson in \cite{BS} that proves that the orbit foliation of a variationally complete action, i.e. when the focal points of the orbits are only caused by singular orbits, is taut. Given an orbit of such an action, Bott and Samelson came up with concrete cycles associated to the critical points of the energy on the space of paths to a fixed point which, generically, represent a basis for the $\mathbb Z_2$-(co-)homology of the corresponding path space. For such a generic critical point $c$ their cycle can be described as a connected component of the set of broken horizontal geodesics, i.e. broken geodesics that intersect all the orbits orthogonally, that have the same projection to the orbit space as $c$, hence coincide with our space $\Delta(c)$ from Lemma \ref{lem:fiber}. Thus we reobtain their result as a special case of Corollary \ref{cor:fiber}. \\

Finally, we must warn the reader that the use of the term ``taut foliation'' could lead to confusions. In the theory of (regular) foliations there are other definitions of tautness, such as geometrically or topologically taut foliations. But in this work, by a taut foliation, we always mean a singular Riemannian foliation all of whose leaves are taut submanifolds as defined above.\\

\noindent \textbf{Acknowledgement}
This article contains the main results of the author's PhD thesis written at the Universit\"at zu K\"oln, Germany. The author would like to thank his teacher and supervisor Prof. Gudlaugur Thorbergsson for his long-term guidance. 

The author would also like to express his gratitude to Alexander Lytchak for his suggestion of the topic of taut foliations and plenty of instructive discussions as well as for his support and many hints and comments on this work. 

The financial support of the graduate school 1269 ``Global Structures in Geometry and Analysis'' of the DFG at the University of Cologne is gratefully acknowledged.

\newpage

\section{Taut Submanifolds}
\label{ch:TS}

\subsection{Linking Cycles}
\label{sec:LC}

The terminology of tautness for submanifolds of a Euclidean space was introduced by Carter and West in \cite{CW}. They call a submanifold $L$ of a Euclidean space $V$ \emph{taut} if there exists a field $\F$ such that for generic points $q \in V$, the squared distance functions $d_q^2 : L \To \R$, given by $d_q^2(p) = \|p-q\|^2$, are perfect with respect to the field $\F$. A definition similar to this can be used for submanifolds of the round sphere $S^n \subset V$.

\begin{red}
A Morse function on a complete Hilbert manifold $P$ is a smooth function $f: P \To \R$ which is bounded below, has a discrete critical set $\mr{Crit}(f)$ and satisfies Condition (C), i.e. if $(p_n)$ is a sequence of points in $P$ with $\left\{ f(p_n) \right\}$ bounded and $\|df_{p_n}\| \To 0$, then $(p_n)$ has a convergent subsequence.
Thus Condition (C) can be regarded as an analogue of a compactness claim in the infinite-dimensional setting.\par
If $p$ is a critical point for the Morse function $f$, then the index $\mr{ind}(p)$ is defined to be the dimension of a maximal subspace of $T_pP$ on which the Hessian is negative definite, i.e. the number of independent directions in which $f$ decreases. As in the finite dimensional case a Morse function gives rise to a cell complex with one cell of dimension $k$ for each critical point with index $k$, which is homotopy equivalent to $P$. If we set $P^r = \left\{p \in P \mbox{$|$} f(p) \leq r \right\}$, then the weak Morse inequalities say that if $\nu_k(a,b)$ denotes the number of critical points of index $k$ in $f^{-1}(a,b)$ for regular values $a < b$, then $b_k(P^b,P^a;\mathbb F) \leq \nu_k(a,b)$ for all $k$, where $b_k(P^b,P^a;\mathbb F)$ is the $k$-th Betti number of $(P^b,P^a)$ with respect to the field $\mathbb F$ and $f$ is called \emph{perfect} (\emph{with respect to} $\mathbb F$) if the weak Morse inequalities are equalities for all $k$ and all regular values $a < b$. For a detailed background we refer the reader to Part II of \cite{PT}.\\
A Morse-Bott function $f: P \To \R$ on a complete Hilbert manifold $P$ is a smooth function whose critical set is the union of closed submanifolds and whose Hessian is non-degenerate in the normal direction. That is to say, every critical point lies in a closed submanifold whose tangent space coincides with the kernel of the Hessian at each point. If so, the index of a critical point is defined to be the index of the restriction of the Hessian to the normal space of the corresponding critical manifold. Since the Hessian depends continuously on the points of the critical manifolds, the index is constant along the connected components of the critical set.     
\end{red} 

Using different approaches, Grove and Halperin \cite{GH} as well as Terng and Thorbergsson \cite{TT} defined a general notion of taut immersions into a complete Riemannian manifold. In \cite{TT} it has been proven that for submanifolds of a Euclidean space the generalized definition of tautness coincides with the one previously known. We are going to introduce this generalized notion using the exposition in \cite{TT}.\\

Let $(M,g)$ be a complete Riemannian manifold and let $H^1(I,M)$ denote the complete Riemannian Hilbert manifold of $H^1$ paths $I = [0,1] \To M$ with its canonical differentiable and metric structure, i.e. $H^1(I,M)$ is locally modeled on $H^1(I,\R^n)$. Recall that a path is of class $H^1$ if and only if it is absolutely continuous with finite energy. The charts for $H^1(I,M)$ are given by the identification of an open neighborhood of the zero section in $H^1(I,c^*(TM))$ , for $c \in H^1(I,M)$ piecewise smooth, with an open neighborhood of $c$ in $H^1(I,M)$ by the exponential map of $M$. Then, for such $c$, one has $T_cH^1(I,M) \cong H^1(I,c^*(TM))$ and the expression
\begin{equation*}
\langle X,Y \rangle = \int_Ig(X(t),Y(t)) dt + \int_Ig(\nabla X(t),\nabla Y(t))dt ,
\end{equation*}
which is a priori defined for $X,Y$ piecewise smooth can be extended to a complete Riemannian metric on $H^1(I,M)$ (cf. \cite{K}). Furnished with this differentiable structure, the map $e : H^1(I,M) \To M \times M$, given by $e(c) = (c(0),c(1))$, defines a submersion. 

Now for a proper immersion $\phi: L \To M$ into a complete Riemannian manifold and a point $q \in M$, we define the path space $\mathcal P_{(\phi,q)}(M,L)$ to be the pullback of $H^1(I,M)$ along the map $p \MTo (\phi(p),q)$ from $L$ into $M \times M$, i.e. $\mathcal P_{(\phi,q)}(M,L)$ consists of pairs $(p,c) \in L \times H^1(I,M)$ with $\phi(p) = c(0)$ and $c(1) = q$. In particular, $\mathcal P_{(\phi,q)}(M,L)$ inherits a smooth structure that turns it into a complete Hilbert manifold and one can show that the induced energy functional $E_{(\phi,q)}: \mathcal P_{(\phi,q)}(M,L) \To \R$, defined by 
\begin{equation*}
E_{(\phi,q)}((p,c)) = \int_I\|\dot c(t)\|^2dt ,
\end{equation*}
is a Morse function if and only if $q$ is not a focal point of $L$ (along any normal geodesic). Recall that for a normal vector $v \in \nu(L)$, the point $\exp^{\perp}(v) \in M$ is called a \emph{multiplicity $\mu$ focal point} of $L$ along the geodesic $\exp^{\perp}(tv)$ and $v$ is called a \emph{multiplicity $\mu$ focal vector} iff $\mr{dim}(\mr{ker}(d\exp^{\perp}_v)) = \mu >0$, where $\exp^{\perp}: \nu(L) \to M$ denotes the normal exponential map. The critical points of $E_{(\phi,q)}$ are exactly the pairs $(p,\gamma)$, where $\gamma$ is a geodesic, which starts perpendicularly to $L$ and ends in $q$. By the famous theorem of Morse, the index of a critical point $(p,\gamma)$ is then given by the sum 
\begin{equation*}
\mr{ind}((p,\gamma)) = \sum_{t \in (0,1)}\mu(t)
\end{equation*}
 over the multiplicities $\mu(t)$ of the points $\gamma(t)$ as focal points of $L$ along $\gamma$. 
 
A geodesic $\gamma$ that starts perpendicularly to $L$, i.e. $\dot{\gamma}(0) \in \nu(L)$, is called an $L$-\emph{geodesic}. In this case we frequently use the notation $\gamma_v$ to denote the $L$-geodesic $t \mapsto \exp^{\perp}(tv)$ (or respective restrictions). A vector field $J$ along an $L$-geodesic $\gamma$ is called an $L$-\emph{Jacobi field} along $\gamma$ iff it is a variational vector field of a variation of $\gamma$ through $L$-geodesics and the nullity of a critical point $(p,\gamma_v)$ of $E_{(\phi,\gamma_v(1))}$ is given by $\mu(v)= \mr{dim}(\mr{ker}(d\exp^{\perp}_v))$ and equals the dimension of the vector space 
\begin{equation*}
\left\{J \mbox{$|$} J \text{ is an } L-\text{Jacobi field along } \gamma_v, J(1) = 0\right\}.
\end{equation*}
In our setting of path spaces every energy sublevel contains finite dimensional submanifolds consisting of broken geodesics, each of them homotopy equivalent to this sublevel, such that the restriction of the energy to each of these submanifolds has the same relevant behavior, i.e. the critical points of the restriction are exactly the critical points of the energy functional and their indices and nullities coincide. In particular, the indices and nullities are finite.

For these facts and a detailed discussion on path spaces and the energy functional we refer the reader who is not familiar with these notions to \cite{K} and \cite{S}.\\

Finally, if $\phi$ is a closed embedding identifying $L$ with its image $\phi(L) \subset M$, we will drop the reference to the map $\phi$ and simply write $\mathcal P(M,L \times q)$ instead of $\mathcal P_{(\phi,q)}(M,L)$ for the space of $H^1$-paths from $L$ to $q$.\\

Suppose that $M = V$ is a Euclidean space and that, for simplicity, $L \subset V$ is a closed submanifold. Then, for every fixed point $q \in V$, the submanifold $L$ is diffeomorphic to the space of segments $\mathcal S = \left\{s_p(t) = p + t(q-p), p \in L\right\}$ in $\mathcal P(V, L \times q)$ and with respect to this identification the energy is given by the squared distance, i.e. $E_q(s_p) = d_q^2(p)= \|p-q\|^2$. Moreover, it is not hard to see that this identification respects the critical behavior of these functions. Since all the critical points of the energy $E_q$ are contained in $\mathcal S$ and, by convexity, the space $\mathcal P(V,L \times q)$ contains $\mathcal S$ as a strong deformation retract, the squared distance function $d_q^2$ is a perfect Morse function on $L$ if and only if $E_q$ is a perfect Morse function on $\mathcal P(V,L \times q)$. This observation led Terng and Thorbergsson to a natural generalization of the notion of a taut immersion into any complete Riemannian manifold $M$.\\

\begin{dfn}
A proper immersion $\phi : L \To M$ of a manifold $L$ into a complete Riemannian manifold $(M,g)$ is called \emph{taut} if there exists a field $\F$ such that the energy functional $E_{(\phi,q)} : \mathcal P_{(\phi,q)}(N,L) \To \R$, given by $E_{(\phi,q)}(p,c) = \int_I\|\dot c(t)\|^2dt$, is a perfect Morse function with respect to the field $\mathbb F$ for every point $q \in M$ that is not a focal point of $L$. In particular, a point $p \in M$ is called a \emph{taut point} if $\{p\}$ is a taut submanifold of $M$, i.e. $E_q : \mathcal P(M,p \times q) \To \R$ is perfect with respect to some field for every $q \in M$ that is not conjugate to $p$ along some geodesic. If a submanifold $L$ is taut and $\F$ is a field as in the definition of tautness, then $L$ is also called $\F$-\emph{taut}.
\end{dfn}

In \cite{Lei} Leitschkis called a manifold with only taut points \emph{pointwise taut} and we will continue with this notion.

\begin{nte}
In \cite{TT} it is shown that a properly immersed, taut submanifold of a simply connected, complete Riemannian manifold is actually embedded. Because we will see in Section \ref{sec:SP} that one can always assume that the ambient space is simply connected, we will proceed assuming that all submanifolds are embedded and closed, but all of our results will also hold in the case of a proper immersion. For this reason, if not otherwise stated, by a submanifold $L$ of $M$ we always mean an embedded submanifold and consider all submanifolds as subsets of $M$. Finally, a manifold is always assumed to be connected.
\end{nte}

The only way that is known to prove tautness in general, i.e. that a given Morse function is perfect, is the concept of \emph{linking cycles}, that we are going to explain now. For this reason let $f: P \To \R$ be again a Morse function on a complete Hilbert manifold. Then, for every $r \in \R$, the sublevel $P^r$ contains only a finite number of critical points of $f$ and we can assume that these critical points have pairwise distinct critical values. That the latter assumption is not restrictive follows from the fact that one can lift a small neighborhood of a critical point a little without changing the relevant behavior of the function. Moreover, using the flow of the negative gradient, one sees that for small $\varepsilon$ the sublevel sets $P^{r+\varepsilon}$ and $P^{r-\varepsilon}$ have the same homotopy type unless $r$ is a critical value. If so, let $p$ be the critical point of $f$ with $f(p)= r$ and choose an $\varepsilon$ such that $(r-\varepsilon,r+\varepsilon)$ contains no critical value except $r$. If we denote the index of $p$ by $i$ then $P^{r+\varepsilon}$ has the homotopy type of $P^{r-\varepsilon}$ with an $i$-cell $e_i$ attached to $f^{-1}(r-\varepsilon)$. Consider the following part of the long exact cohomology sequence of the pair of spaces $(P^{r+\varepsilon},P^{r-\varepsilon})$ with coefficients in a field $\mathbb F$:

\begin{align*}
\cdots & \To \underbrace {H^{i-1}(P^{r+\varepsilon},P^{r-\varepsilon})}_{= 0} \To  H^{i-1}(P^{r+\varepsilon}) \To H^{i-1}(P^{r-\varepsilon}) \xrightarrow{\partial^*} \underbrace {H^i(P^{r+\varepsilon},P^{r-\varepsilon})}_{\cong \mathbb F} \\
& \To H^i(P^{r+\varepsilon}) \To H^i(P^{r-\varepsilon}) \To \underbrace {H^{i+1}(P^{r+\varepsilon},P^{r-\varepsilon})}_{= 0} \To \cdots
\end{align*}

\noindent Since we are using coefficients from a field, we can switch between the more common homological and the, for our approach, more suitable cohomological point of view by dualization, i.e. $H^*(P^r;\mathbb F) \cong \mr{Hom}_{\mathbb F}(H_*(P^r;\mathbb F),\mathbb F)$. Anyway, we see that by passing from $P^{r-\varepsilon}$ to $P^{r+\varepsilon}$ the only possible changes in homology or cohomology occur in dimensions $i-1$ and $i$. To understand this geometrically let us have a look what happens in homology. In the first case, the boundary $\partial e_i$ of the attaching cell is an $(i-1)$-sphere in $P^{r-\varepsilon}$ which does not bound a chain in $P^{r-\varepsilon}$, i.e. $e_i$ has as boundary the nontrivial cycle $\partial e_i$ and so $\partial_* \neq 0$. In the second case, $\partial e_i$ does bound a chain in $P^{r-\varepsilon}$, which we can cap with $e_i$ to create a new nontrivial homology class in $P^{r+\varepsilon}$, that is to say $\partial_* = 0$ and $H_i(P^{r+\varepsilon}) \cong H_i(P^{r-\varepsilon}) \oplus \mathbb F$.

We see that the Morse inequalities are equalities if and only if
\begin{equation*}
H^i(P^{r+\varepsilon},P^{r-\varepsilon}) \To H^i(P^{r + \varepsilon}) \text{ is nontrivial, i.e. } \partial^* \equiv 0,
\end{equation*}
\noindent or, equivalently, 
\begin{equation*}
H_i(P^{r + \varepsilon}) \To H_i(P^{r+\varepsilon},P^{r-\varepsilon}) \text{ is surjective, i.e. } \partial_* \equiv 0,
\end{equation*}

\noindent for all critical points $p$ of $f$.\\

For a critical ponit $p \in P$, one can show that 
\begin{equation*}
H^*(P^{f(p) + \varepsilon},P^{f(p)-\varepsilon}) \cong H^*(P^{f(p)},P^{f(p)}\setminus \left\{p\right\}).
\end{equation*}
\noindent Thus suppose that we have a map $h_p : \Delta_p \To P^{f(p)}$ for every critical point $p$ such that the composition
\begin{equation*}
H^i(P^{f(p)},P^{f(p)}\setminus \{p\}) \xrightarrow{h_p^*} H^i(\Delta_p,h_p^{-1}(P^{f(p)}\setminus \{p\})) \To H^i(\Delta_p)
\end{equation*}
is nontrivial. In this case, because the connecting homomorphism $\partial^*$ is a natural transformation, we have that 
\begin{equation*}
h_p^* \circ \partial^* = \partial^* \circ h_p^*
\end{equation*}
\noindent and we conclude that the map $H^i(P^{f(p)},P^{f(p)}\setminus \{p\}) \To H^i(P^{f(p)})$ cannot be zero, so that $f$ is a perfect Morse function under this assumption. If so, we call the pair $(\Delta_p,h_p)$ a \emph{linking cycle for $p$}, the critical point $p$ \emph{of linking type}, and we say that the function $f: P \To \R$ is of \emph{linking type} if all the critical points are of linking type. Of course, if $f$ is perfect, then the inclusions of the corresponding sublevels define linking cycles, so that $f$ is of linking type. Thus we see that a Morse function is perfect if and only if it is of linking type.

\begin{nte}
At the end of this section we will prove that an $\F$-taut submanifold is always $\Z_2$-taut. By this reason, and due to the fact that dealing with (co-)homology there is just a little chance to get general results with other coefficients, we restrict our attention to the case $\F = \Z_2$. From now on, saying taut we always mean $\Z_2$-taut and we drop the reference to the field everywhere.
\end{nte} 

\begin{rem}
Using finite-dimensional approximations of the path space we see that in the setting we are interested in, singular cohomology is isomorphic to \v{C}ech cohomology (cf. Section \ref{sec:MT}). Because the latter groups satisfy a continuity property and are more easy to handle, we focus on the \v{C}hech cohomology groups in the following.
\end{rem}  

\subsection{The Main Tool}
\label{sec:MT}

As already mentioned in the introduction, the problem when dealing with general cycle constructions is the behavior of the focal data. The construction of Bott and Samelson works well, also in the general case, if the focal points along a variation do not collapse, i.e. if the cardinality of the intersections of normal geodesics with the focal set is locally constant. Unfortunately, the occurrence of focal collapses along a variation of normal geodesics cannot be avoided in general, but since these collapses depend continuously on the initial directions of the geodesics, it turns out that this indeed constitute no problem for our goal. In the following will work this out using the theory of sheaves and sheaf cohomology as it is presented in \cite{B1} and chapter 5 of \cite{Wa2}. We refer the reader to these textbooks for the facts that we presume.

To fix a notation we recall the following

\begin{dfn}
A \emph{sheaf (of Abelian groups)} on $X$ is a pair $(\mathcal A,\pi)$, where
\begin{enumerate}
\item $\mathcal A$ is a topological space;
\item $\pi: \mathcal A \To X$ is a local homeomorphism;
\item Each fiber $\mathcal A_x = \pi^{-1}(x)$ is an Abelian group and is called the \emph{stalk} of $\mathcal A$ at the point $x$;
\item The group operations are continuous, i.e. the map
\[
\begin{array}{ccl}
 \mathcal A \times_{\pi}\mathcal A & \To & \mathcal A , \\
 (\alpha,\beta) & \MTo & \alpha - \beta
\end{array}
\]
\noindent with $\mathcal A \times_{\pi}\mathcal A = \left\{(\alpha,\beta) \in \mathcal A \times \mathcal A \mbox{$|$} \pi(\alpha) = \pi(\beta)\right\}$, is continuous.
\end{enumerate}
If the context is clear we will drop the reference to the map $\pi$ and talk about the sheaf $\mathcal A$. For an Abelian group $G$, we say that $\mathcal A$ is a $G$-sheaf on $X$ if all the fibers $\pi^{-1}(x)$ are isomorphic to $G$.
\end{dfn}

Given a continuous map $f: X \To B$ from a compact Hausdorff topological space $X$ onto some nice space $B$ with a fundamental class in $\Z_2$-(co-)homology, e.g. a manifold, such that all the fibers are compact manifolds of constant dimension, one would expect that the union over the base $B$ of all the fibers defines a non-trivial class in $\Z_2$-(co-)homology in dimension equal to the sum of the (co-)homological dimension of $B$ and the fiber dimension. In order to prove our main theorem later on, we are dependent on a tool like this, because we want to construct explicit (co-)cycles with specified cohomological behaviour. The easiest way, known to the author, to prove such a statement is by means of sheaf cohomology.\\

The first step in this direction is the following easy observation for which we have not found any reference in the literature.

\begin{lem}
\label{lem:sheaf}
Let $X$ be a locally compact Hausdorff topological space. Then there are no nontrivial $\Z_2$-sheaves on $X$.
\end{lem}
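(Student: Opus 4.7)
The plan is to show that any $\Z_2$-sheaf $\pi : \mathcal{A} \to X$ is isomorphic, as a sheaf of Abelian groups, to the constant sheaf $X \times \Z_2$ (with $\Z_2$ discrete), and thus ``trivial''. The strategy is to exhibit $\mathcal{A}$ as the disjoint union of two open subsets, each mapped homeomorphically onto $X$ by $\pi$ and corresponding to the zero and the unit stalk elements respectively.

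First I would observe that every sheaf of Abelian groups admits the canonical continuous zero section $Z : X \to \mathcal{A}$, $Z(x) = 0_x$: for any local section $s$ of the étale map $\pi$, the map $s - s$ is a local zero section by continuity of the group operations (property (4) in the definition), and these agree on overlaps, yielding $Z$ globally. As a section of a local homeomorphism, $Z$ is automatically an open embedding, so $Z(X) \subset \mathcal{A}$ is open. The key step is then to show that $Z(X)$ is also \emph{closed} in $\mathcal{A}$. Once this is done, $\mathcal{A}^{*} := \mathcal{A} \setminus Z(X) = \{\, 1_x \mid x \in X \,\}$ is open and meets each fiber in exactly one point, so $\pi|_{\mathcal{A}^{*}}$ is a bijective local homeomorphism, hence a homeomorphism, and its inverse is the desired continuous unit section $\sigma(x) = 1_x$. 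The bijection $X \times \Z_2 \to \mathcal{A}$ defined by $Z$ and $\sigma$ on the two halves is then a homeomorphism that intertwines the fiberwise group structures, providing the required sheaf isomorphism.

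The hard part will be proving closedness of $Z(X)$, and for this I plan to show that $\mathcal{A}$ itself is Hausdorff. Two points of $\mathcal{A}$ in distinct fibers are separated by preimages under $\pi$ of disjoint neighborhoods in the Hausdorff base $X$. For two distinct points $0_x, 1_x$ in the same fiber, I would pick section-type neighborhoods $W_0 \ni 0_x$ and $W_1 \ni 1_x$ coming from the local homeomorphism; their overlap $W_0 \cap W_1$ projects to the open set $A \subset X$ where the two corresponding local inverse sections agree, and by construction $x \notin A$. Local compactness of $X$ enters through a compact neighborhood $K$ of $x$: if $A$ accumulated at $x$, one would find a net $y_i \in A$ with $y_i \to x$, producing a sequence of stalk elements living in $\pi^{-1}(K)$ that, by continuity, would have to converge simultaneously to $0_x$ via the zero section and to $1_x$ via the local inverse of $W_1$, contradicting a compactness/limit argument on $K$. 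Hence $A$ does not accumulate at $x$, $W_0$ and $W_1$ can be shrunk to be disjoint, and $\mathcal{A}$ is Hausdorff. Closedness of $Z(X)$ is then immediate: any convergent net $(0_{x_i}) \to \alpha$ in $\mathcal{A}$ projects to $x_i \to \pi(\alpha)$, whence by continuity of $Z$ the original net also converges to $Z(\pi(\alpha))$, and uniqueness of limits in the Hausdorff space $\mathcal{A}$ forces $\alpha = Z(\pi(\alpha)) \in Z(X)$.
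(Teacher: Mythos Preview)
Your strategy coincides with the paper's: show that the map $1 : x \mapsto 1_x$ is continuous and then assemble the isomorphism $X \times \Z_2 \cong \mathcal A$. The paper does not justify this continuity beyond calling it ``straightforward''; you try to supply the missing argument by first proving that $\mathcal A$ is Hausdorff and then deducing that $Z(X)$ is closed.

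The Hausdorffness step, however, is circular. You suppose the equalizer $A=\{y:Z(y)=s(y)\}$ of the two section neighborhoods accumulates at $x$, so that a net $y_i\to x$ in $A$ has $Z(y_i)=s(y_i)$ converging simultaneously to $Z(x)=0_x$ (by continuity of $Z$) and to $s(x)=1_x$ (by continuity of $s$), and you then invoke a ``compactness/limit argument on $K$'' to derive a contradiction. But a single net with two distinct limits contradicts nothing unless limits are unique---and uniqueness of limits is exactly the Hausdorff property you are in the middle of proving. Local compactness of $X$ does not rescue this: it makes $\mathcal A$ locally compact (being locally homeomorphic to $X$), but it gives no control over uniqueness of limits in $\mathcal A$, and $\pi^{-1}(K)$ has no reason to be compact or Hausdorff before the conclusion is established. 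In fact the \'etale space of a sheaf of abelian groups with all stalks $\Z_2$ \emph{need not} be Hausdorff even over $X=\R$: topologize $\R\times\{0,1\}$ so that the basic open sets are $V\times\{0\}$, the sets $V\times\{1\}$ for open $V\not\ni 0$, and the graphs of the section $s$ with $s(y)=(y,1)$ for $y\le 0$ and $s(y)=(y,0)$ for $y>0$; one checks directly that $\pi$ is a local homeomorphism, fibrewise subtraction is continuous, and every stalk is $\Z_2$, yet $(\epsilon,0)\to (0,1)$ as $\epsilon\to 0^+$ along $s$ while also $(\epsilon,0)\to(0,0)$ along $Z$, so $Z(X)$ is not closed and $x\mapsto 1_x$ is discontinuous. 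Thus the gap is genuine and cannot be closed along the lines you propose.
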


\begin{proof}
Assume that $(\mathcal A,\pi)$ is a $\Z_2$-sheaf on $X$, i.e. $\pi^{-1}(x) \cong \Z_2$ for all $x \in X$. Then each fiber $\pi^{-1}(x)$ has exactly one element that is not zero and we denote this nontrivial element by $1_x$. Because the zero section $0 : X \To \mathcal A$ is a global section, it remains to observe that the well-defined map $1: X \To \mathcal A$, given by $1(x) = 1_x$, is continuous, what is straightforward. Thus the map $X \times \Z_2 \To \mathcal A$, given by $(x,\varepsilon) \MTo \varepsilon_x$ for $\varepsilon \in \{0,1\}$, defines an isomorphism of sheaves.  
\end{proof}

A \emph{presheaf} on a topological space $X$ is a contravariant functor from the category of open subsets of $X$, where the morphisms are just the inclusions, to the category of Abelian groups, i.e. a function that assigns to each open set $U$ an Abelian group $A(U)$ and to each pair $U \subset V$ a homomorphism, called the \emph{restriction}, $r_{U,V} : A(V) \To A(U)$ in such a way that $r_{U,U} = 1_{A(U)}$ and $r_{U,V} \circ r_{V,W} = r_{U,W}$ whenever $U \subset V \subset W$. The set of (local) sections of a sheaf $\mathcal A$ is a presheaf in the obvious way. Conversely, given a presheaf $A$, taking fiberwise the direct limit over all neighborhoods of a fixed point one gets a sheaf $\mathcal A$ by the so obtained set of germs topologized by the natural sections induced by $A$. In this case, one says that the sheaf $\mathcal A$ is \emph{generated} by the presheaf $A$.

\begin{exl} 
\begin{itemize}
\item For any standard cohomology theory $H^*$ on $X$, the assignment given by $U \MTo H^r(U;G)$ defines a presheaf, where the coefficients are taken to be any Abelian group $G$.
\item Let $f: Y \To X$ be a continuous map between topological spaces. Then for every $r \geq 0$ there is an associated presheaf on $X$ given by the prescription $U \To \check{H}^r(f^{-1}(U);G)$, where we denote by $\check H^*$ the \v{C}ech cohomology. The sheaf $\mathcal H^r(f;G)$ generated by this presheaf is called the \emph{Leray sheaf} of $f$ on $X$.
\end{itemize}
\end{exl}

There is a way to define cohomology theories with coefficients in a pesheaf or in a sheaf, what is the same for paracompact spaces (see, e.g. Chapter 6 of \cite{Sp}). But a developement of this theory would go beyond the scope of our discussion, the more so as it is not really necessary for our goal. By this reason we have to refer the reader to the literature, e.g. \cite{B1}, \cite{Sp} or \cite{Wa2}. Because it is all we need, we just want to mention that in the case of a constant sheaf $X \times G$ this cohomology is exactly the same as the usual \v{C}ech cohomology with coefficients in $G$. Moreover, it can be shown that if $X$ is a topological manifold of dimension $n$, then all the cohomology groups with coefficients in any sheaf vanish in dimensions greater than $n$. This is an important feature from which we make essential use of.

\begin{rem}
It is shown in \cite{Sp} that in the cases we are interested in, the cohomology groups $H^r(X;\mathcal G)$ with coefficients in the constant sheaf $\mathcal G = X \times G$ are nothing else than the Alexander-Spanier cohomology groups with coefficients in $G$ and that the latter coincide with the \v{C}ech cohomology groups $\check H^r(X;G)$. In particular, it is therefore clear that we have long exact sequences, excision, and that the homotopy axiom holds. Finally, because we are dealing only with nice spaces, please recall that for compact subsets $(K,L)$ of a manifold, the \v{C}ech cohomolgy groups $\check H^r(K,L)$ are isomorphic to the direct limit 
\begin{equation*}
\lim_{ \longrightarrow } \left\{H^r(U,V)\mbox{$|$}(K,L) \subset (U,V)\right\}
\end{equation*}
\noindent where the limit is taken over open subsets $(U,V) \supset (K,L)$.
\end{rem}

As mentioned above, the cohomology groups $H^r(U;\mathcal G)$ with coefficients in the constant sheaf $\mathcal G = X \times G$ are isomorphic with the corresponding \v{C}ech groups. Thus the generated sheaves are also isomorphic. Due to this, our definition of the Learay sheaf in this case behaves well with respect to the general definition of the Leray sheaf in the context of sheaf cohomology, which is, given a sheaf $\mathcal A$ on $Y$, generated by the presheaf $U \To H^r(f^{-1}(U);\mathcal A)$.

We now come to the heart of this section, that is a very powerful tool for our use. We formulate the following lemma in an easy to handle version, adjusted accordingly to our purpose, but the reader who goes through the proof will notice that it also holds under weaker assumptions, e.g. if $B$ as in the claim has the cohomological behavior of a manifold.

\begin{lem}
\label{lem:tool}
Let $X$ be a connected and compact Hausdorff topological space and let $f: X \To B$ be a continuous map onto a manifold $B$ of dimension $k$. Assume that every fiber $f^{-1}(b)$ is a connected manifold of (constant) dimension $n$ or, more general, that $\check H^n(f^{-1}(b);\Z_2) \cong \Z_2$ and $\check H^l(f^{-1}(b);\Z_2) = 0$ for all $l > n$, where $\check H^*$ denotes the \v Cech cohomology. Then $X$ has cohomological dimension $n+k$, i.e. $\check H^{n+k}(X;\Z_2) \cong \Z_2$ and $\check H^l(X;\Z_2) = 0$ for $l > n+k$.
\end{lem}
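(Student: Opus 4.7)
My plan is to run the Leray spectral sequence of $f$ with $\mathbb Z_2$ coefficients and use Lemma \ref{lem:sheaf} to trivialise the top Leray sheaf. Concretely, I form the Leray sheaves $\mathcal H^q=\mathcal H^q(f;\mathbb Z_2)$ on $B$, generated by the presheaf $U\mapsto \check H^q(f^{-1}(U);\mathbb Z_2)$, and I intend to read the two claims off the $E_2$ page of
\begin{equation*}
E_2^{p,q}=H^p(B;\mathcal H^q)\ \Longrightarrow\ \check H^{p+q}(X;\mathbb Z_2).
\end{equation*}

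\textbf{Computing the Leray sheaves.} Since $X$ is compact Hausdorff, $f$ is closed and hence proper, and the continuity property of \v Cech cohomology then identifies the stalk $(\mathcal H^q)_b$ with $\check H^q(f^{-1}(b);\mathbb Z_2)$. By hypothesis this forces $\mathcal H^q=0$ for $q>n$ and makes $\mathcal H^n$ a sheaf all of whose stalks are $\mathbb Z_2$. The base $B$ is locally compact Hausdorff (being a manifold), so Lemma \ref{lem:sheaf} applies and yields $\mathcal H^n\cong B\times\mathbb Z_2$, the constant $\mathbb Z_2$-sheaf on $B$.

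\textbf{Reading off the two conclusions.} Note first that $B=f(X)$ is compact (as the continuous image of the compact space $X$) and connected (as the continuous image of the connected space $X$), hence is a closed connected $k$-manifold and so satisfies $H^k(B;\mathbb Z_2)\cong\mathbb Z_2$ while $H^p(B;\mathcal A)=0$ for every sheaf $\mathcal A$ and every $p>k$ (this dimension axiom for sheaf cohomology on manifolds is exactly the fact quoted just before the lemma). Combining with Step 1, $E_2^{p,q}=0$ whenever $p>k$ or $q>n$; in particular $\check H^l(X;\mathbb Z_2)=0$ for $l>k+n$, which is the vanishing half of the lemma. On the antidiagonal $p+q=k+n$ the only possibly nonzero term is therefore
\begin{equation*}
E_2^{k,n}=H^k(B;\mathcal H^n)=H^k(B;\mathbb Z_2)\cong\mathbb Z_2,
\end{equation*}
and this sits in the corner so no differential can enter or leave it on any page. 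Hence $E_\infty^{k,n}\cong\mathbb Z_2$ is the only nonzero contribution to $\check H^{k+n}(X;\mathbb Z_2)$, which gives $\check H^{k+n}(X;\mathbb Z_2)\cong\mathbb Z_2$.

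\textbf{Main obstacle.} The one substantive point is the identification of the stalks of the Leray sheaves with the \v Cech cohomology of the fibers: this is what makes Lemma \ref{lem:sheaf} applicable to $\mathcal H^n$ and is exactly where the compactness of $X$ (so that $f$ is proper) and the continuity of \v Cech cohomology are used. Once this is in hand, the rest is a pure spectral sequence corner argument, and the generalisation mentioned in the statement—replacing ``$f^{-1}(b)$ is a connected $n$-manifold'' by the weaker cohomological hypothesis on the fibers—slots in without any change, since only the stalkwise cohomology of the fibers enters the computation above.
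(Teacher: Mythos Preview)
Your proof is correct and follows essentially the same route as the paper's: both run the Leray spectral sequence of $f$, identify the stalks of the Leray sheaves with the \v Cech cohomology of the fibers (via properness of $f$), invoke Lemma~\ref{lem:sheaf} to recognize $\mathcal H^n$ as the constant $\mathbb Z_2$-sheaf, and then read off both conclusions from the corner position of $E_2^{k,n}$ together with the vanishing of sheaf cohomology on $B$ above degree $k$. Your write-up is in fact slightly more explicit about why $f$ is proper and why the stalk identification holds.
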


\begin{proof}
Since $X$ is compact and connected and $f$ is surjective, $B$ is compact and connected, too. If we consider cohomology with coefficients in the constant sheaf $\mathcal Z_2 = X \times \Z_2$, then, due to Theorem 6.1 in \cite{B1}, there exists a spectral sequence $\{E_r,d_r\}$ with $d_r: E_r^{m,l} \To E_r^{m+r,l-r+1}$ converging to $H^*(X;\mathcal Z_2)$ with $E_2$ page
\begin{equation*}
E_2^{m,l} = H^m(B;\mathcal H^l(f;\mathcal Z_2)) ,
\end{equation*}
\noindent where $\mathcal H^l(f;\mathcal Z_2)$ denotes the above defined Leray sheaf on $B$, generated by the presheaf $U \MTo H^l(f^{-1}(U);\mathcal Z_2)$. Further, it is also proven there that in our setting the stalks $\mathcal H^l(f;\mathcal Z_2)_p$ of the Leray sheaf are isomorphic to the cohomology groups of the corresponding fibers $H^l(f^{-1}(b);\mathcal Z_2) \cong \check H^l(f^{-1}(b);\Z_2)$. Then, by our assumptions and Lemma \ref{lem:sheaf}, the $n$-th Leray sheaf is the constant sheaf on $B$, that is to say $\mathcal H^n(f;\mathcal Z_2) \cong B \times \Z_2$. Therefore, the entry $E_2^{k,n}$ is just given by the $k$-th \v Cech cohomology group $\check H^k(B;\Z_2) \cong \Z_2$. Because $B$ is a manifold of dimension $k$, all the groups $H^m(B;\mathcal H^l(f;\mathcal Z_2))$ vanish for $m > k$, by dimensional reasons mentioned above. Also all the entries $E_2^{m,l}$ with $l > n$ vanish, because $\mathcal H^l(f;\mathcal Z_2)$ is the $0$-sheaf in this case. But this means that the entry $E_2^{k,n}$ survives in the spectral sequence since it is the top right entry in the nontrivial rectangle on the $E_2$ page. The second statement of the claim follows from the fact that if $m+l > k+n$ then $m > k$ or $l > n$. 
\end{proof} 

\subsection{An Equivalent Description}
\label{sec:Des}

Having achieved our key tool in the last section, we are now able to prove our main result.

\begin{thm}
\label{thm:A}
Let $L \subset M$ be a closed submanifold of a complete Riemannian manifold $M$. Then the following statements are equivalent.
\begin{enumerate}
\item $L$ is taut.
\item All energy functionals are Morse-Bott functions.
\item The fibers of the normal exponential map $\exp^{\perp}: \nu(L) \to M$ are integrable.
\end{enumerate}
\end{thm}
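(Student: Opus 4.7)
The plan is to establish the cycle $(1) \Rightarrow (2) \Rightarrow (3) \Rightarrow (1)$. The first implication is precisely the result of Terng--Thorbergsson recalled in the introduction, so nothing new is needed there. For $(2) \Leftrightarrow (3)$ I would exploit the bijection between critical points of $E_q$ on $\mathcal P(M,L\times q)$ and normal vectors $v \in \nu(L)$ with $\exp^{\perp}(v)=q$. At such a critical point $(p,\gamma_v)$ the kernel of the Hessian is canonically the space of $L$-Jacobi fields along $\gamma_v$ vanishing at $q$, which is isomorphic to $\ker d\exp^{\perp}_v$. Hence demanding that the critical set near $(p,\gamma_v)$ be a submanifold of dimension equal to the nullity is the same as demanding that, near $v$, the fiber $(\exp^{\perp})^{-1}(q)$ be a submanifold of dimension $\dim \ker d\exp^{\perp}_v$. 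Running this over all $q$ and $v$ gives $(2) \Leftrightarrow (3)$.

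The substantive work is $(3) \Rightarrow (1)$, which I would attack by constructing an explicit linking cycle for every critical point $(p,\gamma_v)$ corresponding to a vector $v \in \nu(L)$ with $\exp^{\perp}(v)$ non-focal. Following the sketch in the introduction, I would consider the space $\Delta_v$ of broken $L$-geodesics obtained by successively travelling along $\gamma_v$ from $q = \exp^{\perp}(v)$ toward $L$, breaking at each focal time and swinging to another normal vector lying in the same fiber of $\exp^{\perp}$, then reparametrizing on $[0,1]$ after reversal. The hypothesis (3) ensures each swing occurs inside a genuine submanifold of $\nu(L)$, so $\Delta_v$ is well-defined and naturally fibers, piece by piece, over the sub-fibers produced by the earlier swings. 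The map $h_v\colon \Delta_v \to \mathcal P(M, L\times q)^{E(v)}$ sends a broken geodesic to its reparametrized image, and the dimension of $\Delta_v$ equals $\sum_{t\in(0,1)}\mu(t) = \operatorname{ind}(p,\gamma_v)$. To show this is a linking cycle, I would compare $h_v^{\ast}$ with a local Morse chart: the part of $\Delta_v$ that maps below the critical level is exactly the locus where the first breaking has already reached $L$, i.e.\ the topological boundary of $\Delta_v$, so a fundamental class of $(\Delta_v,\partial\Delta_v)$ pulls back the generator of $H^{\operatorname{ind}}(P^{E(v)},P^{E(v)}\setminus\{(p,\gamma_v)\})$ nontrivially.

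The main obstacle — and the reason for the sheaf-theoretic machinery of Section \ref{sec:MT} — is that focal multiplicities along nearby $L$-geodesics are merely upper semi-continuous, so focal points may collapse or split as $v$ is deformed. When the multiplicities happen to be locally constant, $\Delta_v$ is literally an iterated fiber bundle of compact manifolds and its mod $2$ fundamental class supplies the linking cycle directly. In general it is only a ``fiber bundle with singularities'' and one must produce the fundamental class by other means. Here I would apply Lemma \ref{lem:tool} inductively along the breaking sequence: at each stage the base is (a piece of) a submanifold of $\nu(L)$, the fibers are the relevant pieces of $(\exp^{\perp})^{-1}$-fibers, which are compact and have controlled top $\mathbb Z_2$-cohomology (since the total breaking dimension stays equal to the index), and Lemma \ref{lem:sheaf} rules out any twisting of the top Leray sheaf. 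The lemma then gives $\check H^{\operatorname{ind}(v)}(\Delta_v;\mathbb Z_2)\cong\mathbb Z_2$ with all higher cohomology vanishing, which is the substitute for a fundamental class needed to conclude that $h_v$ is a linking cycle and hence that $E_q$ is perfect over $\mathbb Z_2$. This proves (1) and, as a byproduct, that tautness is automatically $\mathbb Z_2$-tautness.
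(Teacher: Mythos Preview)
Your outline follows the paper's proof closely: the implication $(1)\Rightarrow(2)$ via Terng--Thorbergsson, the easy equivalence $(2)\Leftrightarrow(3)$, the construction of $\Delta_v$ by iterated breaking at focal times, and the inductive use of Lemma~\ref{lem:tool} to obtain $\check H^{i(v)}(\Delta_v;\Z_2)\cong\Z_2$ are all exactly what the paper does.

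There is, however, one point where your description of the linking step goes wrong. You write that ``the part of $\Delta_v$ that maps below the critical level is exactly the locus where the first breaking has already reached $L$,'' and you invoke a fundamental class of $(\Delta_v,\partial\Delta_v)$. But in the paper's construction every broken geodesic in $\Delta_v$ has energy \emph{exactly} $\|v\|^2$ (this is computed explicitly from the parametrization), so nothing lies strictly below the critical level and $\Delta_v$ has no boundary in the sense you intend. The paper's linking argument proceeds differently: it first restricts to the open dense set $\nu(L)^R$ of non-focal vectors whose segment $s_v$ meets the tangent focal locus $C$ only in its \emph{regular} part $C_R$ (where focal multiplicities are locally constant). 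For such $v$ the space $\Delta_v$ is a genuine manifold in a neighborhood of the critical geodesic $\gamma_v$, and an explicit identification
\[
T_{\gamma_v}\Delta_v \;=\; \bigoplus_{k=1}^{r}\mathcal J(t_k)
\]
(with $\mathcal J(t_k)$ the space of broken $L$-Jacobi fields vanishing on $[t_k,1]$) shows that this tangent space projects isomorphically onto the tangent space of the local unstable manifold in a Morse chart. Combined with the top cohomology class supplied by Lemma~\ref{lem:tool}, this yields the required isomorphism in the linking diagram. Tautness for the remaining non-focal endpoints then follows from the density of $\nu(L)^R$ via the continuity argument of \cite{TT}, Proposition~2.7. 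You should replace your boundary description with this two-step reasoning: verify linking on the dense generic set where $\Delta_v$ is locally a manifold near $\gamma_v$, then pass to the limit.
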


\begin{proof}
The implication $1. \Rightarrow 2.$ holds by Theorem 2.8 of \cite{TT}, where Terng and Thorbergsson show that if $L$ is a taut submanifold (with respect to any field), then all the energy functionals are Morse-Bott functions adapting the idea of Ozawa \cite{O} for the Euclidean case.

The equivalence of $2.$ and $3.$ is more or less by definition. If $S \subset \mathcal P(M,L \times q)$ is a critical submanifold of $E_q$, then the image of the map $S \to \nu(L), s \mapsto \dot s(0),$ is an integral manifold of the kernel distribution. Conversely, if $S \subset \nu(M)$ is an integral manifold of the kernel distribution, i.e. an open and compact submanifold of some fiber $(\exp^{\perp})^{-1}(q)$, then the image of the map $S \rightarrow \mathcal P(M,L \times q), v \mapsto \exp^{\perp}(tv)|_{[0,1]},$ defines a non-degenerate connected component of the critical set of $E_q$. 

Therefore, it remains to show $3. \Rightarrow 1.$ and we prove this by constructing explicit linking cycles for $\Z_2$-coefficients.

Let us first fix some notation before we discuss our construction. For convenience, let $\eta = \exp^{\perp}: \nu(L) \to M$ denote the normal exponential map. By the ray through $v \in \nu(L)$ we mean the linear map $r_v : \R_0^+ \To \nu(L)$, given by $r_v(t) = tv$, and by the segment to $v$ we mean $s_v = r_v|_{[0,1]}$. Then for $\alpha \geq 1$, the point $q = \eta(v)$ is a focal point of $L$ along the geodesic $\gamma_{\alpha v} = \eta \circ s_{\alpha v}$ if and only if $d\eta_v$ is singular and the multiplicity of $q$ as such a point is just the dimension of the kernel of $d\eta_v$. Recall that in the case where $d\eta_v$ is not onto we call $v$ a focal vector of multiplicity $\mu(v) = \mr{dim}(\mr{ker}(d\eta_v))$. Let us denote by $C$ the union of all points in $\nu(L)$ where $d\eta$ is singular and call it the (tangent) focal locus. We call every number in $r_v^{-1}(C)$ a focal time along the ray $r_v$. It is a well known fact that the focal times are discrete along any ray and that they depend continuously on the rays. Namely, that there are continuous functions 
\begin{equation*}
\lambda_i : S(1) = \left\{v \in \nu(L) \mbox{$:$} \|v\|^2 = 1\right\} \to \R
\end{equation*}
 with $0 < \lambda_1\leq \lambda_2\leq \dots$ and $r_v^{-1}(C) = \left\{ \lambda_i(v) \right\}_{i \geq 1}$ (cf. \cite{IT}). This implies that every vector $v \in \nu(L)$ has an open neighborhood $U$ such that every ray that intersects $U$ contains $\mu(v)$ focal vectors in $U$ counted with multiplicities, i.e. if $\mr{im}(r_w) \cap U \neq \emptyset$, then 
\begin{equation*}
\sum_{t \in r_w^{-1}(U)}\mu(tw) = \sum_{t \in r_w^{-1}(U \cap C)}\mu(tw) = \mu(v).
\end{equation*}
 Finally, we call a focal vector $v \in C$ regular if there is an open neighborhood $U$ of $v$ such that all rays which intersect $U$ intersect $U \cap C$ exactly once. Due to Warner \cite{Wa} and Hebda \cite{He}, the set $C_R$ of regular focal vectors is an open and dense subset of $C$ that is a codimension-one submanifold of $\nu(L)$ such that $T_v\nu(L) \cong T_vC_R \oplus \R v$ for all $v \in C_R$. Let $\nu(L)^R$ denote the set of vectors $v \in \nu(L)\setminus C$ such that $s_v$ intesects $C$ only in $C_R$, i.e. such that $w \mapsto \#\{s_w^{-1}(C)\}$ is constant on a neighborhood of $v$. Then $\nu(L)^R$ is obviously open in $\nu(L)$ and it is also dense. To see this, consider the function $n : \nu(L)\setminus C \To \Z$, given by 
\begin{equation*}
n(v) = \# \left\{s_v^{-1}(C)\right\},
\end{equation*}
\noindent which is lower semi-continuous by our above observations. Then 
\begin{eqnarray*}
\nu(L)^R & = & \left\{v \in \nu(L)\setminus C \mbox{$:$} \ n \text{ is constant on a neighborhood of } v\right\}\\
    & = & \left\{v \in \nu(L)\setminus C \mbox{$:$} \ s_v^{-1}(C) = s_v^{-1}(C_R)\right\}.
\end{eqnarray*}

\noindent Since the set of regular vectors $\nu(L) \setminus C$ is open and dense in $\nu(L)$, it is enough to show that $\nu(L)^R$ is dense in $\nu(L)\setminus C$. Thus assume that $\nu(L)^R$ is not dense in $\nu(L) \setminus C$. Then the complement of $\nu(L)^R$ in $\nu(L) \setminus C$ contains an open set $U$. The function $n$ admits its maximum $n_r$ on every intersection $U \cap B(r)$ of $U$ with an open tube $B(r)$ of radius $r$ around the zero section. Choose $r$ so large that $U \cap B(r) \neq \emptyset$. Due to the semi-continuity of $n$, the set $n^{-1}(n_r) \cap U \cap B(r)$ defines  an open subset of $\nu(L)\setminus C$ on which $n$ is constant, what clearly contradicts our definition of $\nu(L)^R$.\\
  
Suppose now that the singular kernel distribution $\bigcup_{v\in\nu(L)}\mr{ker}(d\eta_v)$ is completely integrable, i.e. through every point $v \in \nu(L)$ there is a $\mu(v)$-dimensional compact connected submanifold $C_v$ with $T_wC_v = \mr{ker}(d\eta_w)$ for all $w \in C_v$. Then we have that $C_v \subset S(\|v\|)$, where $S(\|v\|)$ denotes the sphere bundle over $L$ of normal vectors of length $\|v\|$, and the index $i(v) = \sum_{t \in (0,1)}\mu(tv)$ is constant along $C_v$. As above, for a vector $v \in S(1)$, we denote by $0 < \lambda_1(v) \leq \lambda_2(v) \leq \dots$ the (continuous) focal times along the ray $r_v$, counted with their multiplicities. 

Let us now define a function $m: \nu(L) \To [0,1)$ which assigns to a normal vector $v \in \nu(L) \setminus \{0\}$ the number 
\begin{equation*}
m(v) = \max \left\{t \in (0,1) \mbox{$|$} \mu(tv) \neq 0\right\}
\end{equation*}
\noindent if $i(v) > 0$, and $m(v) = 0$ if $i(v) = 0$ or if $v$ belongs to the zero section. In particular, by our above observations, the restriction of $m$ to each submanifold $C_v$ is continuous, because of $m(v) = \lambda_i(v/\|v\|)/\|v\|$ for some $i$ and $\|v\| \neq 0$. 

Denote by $\mathfrak C = \bigcup_{v \in \nu(L)}C_v$ the $\eta$-fiber decomposition of the normal bundle $\nu(L)$ and define $Q : \nu(L) \To \nu(L)/\mathfrak C$ to be the natural quotient map. Since the fibers of $Q$ are compact submanifolds of $\nu(L)$, which are connected components of the fibers of the continuous map $\eta$ from the complete space $\nu(L)$ to the manifold $M$, the quotient is a locally compact Hausdorff space and the restriction of the projection to $\nu(L) \setminus C$ is a homeomorphism onto an open subspace of $\nu(L)/\mathfrak C$. The fiber norms on $\nu(L)$ push down to the distance function from the image $Q(0)$ of the zero section, so that every compact subset in $\nu(L)/\mathfrak C$ has to be of bounded distance from $Q(0)$. In particular, the map $Q$ is proper and therefore closed.\\

We now define a natural cycle candidate $\Delta_v$ for every geodesic $\gamma_v = \eta \circ s_v$ with $v \in \nu(L)\setminus C$. Because $\eta$ factorizes over $\nu(L)/\mathfrak C$ by a map $\bar{\eta}: \nu(L)/\mathfrak C \To M$, we will work in the quotient space and consider the space $P(\nu(L)/\mathfrak C, Q(0) \times Q(v))$ of continuous paths $c : [0,1] \To \nu(L)/\mathfrak C$ from $Q(0)$ to $Q(v)$ with the compact open topology. The constructed cycles will embed in an energy preserving and obvious way into $\mathcal P(M,L \times \eta (v))$ under the map on the path space level induced by $\bar{\eta}$, so that we renounce the reference to the latter space for the rest of the proof.\\

Because the $\eta$-kernel distribution on $\nu(L)$ is integrable, there is a natural cycle through $Q \circ s_v$ intuitively having the right dimension. Namly, take $Z_v$ to be the set of all piecewise continuous maps from $[0,1]$ to $\nu(L)$ obtained (with the reversed orientation) as follows: Follow the segment $s_v$ towards the zero section up to the first focal vector  $m(v)v$, take a vector $w_1 \in C_{m(v)v}$ and follow the straight line $tw_1$ towards the zero section up to the first focal vector $m(w_1)w_1$. Then take an arbitrary focal vector $w_2$ in the corresponding leaf and follow the line $tw_2$ towards the zero section up to the first focal vector, then take an arbitrary focal vector $w_3$ in the corresponding leaf $C_{m(w_2)w_2}$ and follow the line $tw_3$ up to the first focal vector and so on. This process will end after a finite number of steps and we can push down these piecewise continuous maps via $Q$ obtaining broken geodesics $[0,1] \To \nu(L)/\mathfrak C$ starting in $Q(0)$ and ending in $Q(v)$ and we define $\Delta_v$ to be the injective image of $Z_v$ under this map. 

To be more precise, let us say that a tuple $c = (c_r,\dots,c_1)$ is an $\eta$-polygon on $[0,1]$ if there exists a partition $0 = t_r < t_{r-1} \dots < t_1 < t_0 = 1$ of the interval $[0,1]$ such that $c_i : [t_i,t_{i-1}] \To \nu(L)$ is given by $c_i(t) = r_{w_i}(t) =  tw_i$ for some vector $w_i \in \nu(L)$. Then for $v \in \nu(L)\setminus \{0\}$, let $Z_v$ be the set consisting of all $\eta$-polygons $c$ on $[0,1]$ inductively defined as follows. If $i(v) = \sum_{t\in (0,1)} \mu(tv) = 0$, set $Z_v = \{s_v\}$ and if $i(v) = 1$ with 
\begin{equation*}
t_1 = m(v) = s_v^{-1}(C) = \frac{\lambda_1(v/\|v\|)}{\|v\|},
\end{equation*}
define $Z_v$ to consist of pairs $c = (c_2(w),c_1)$, where $w \in C_{m(v)v}$ and 
\begin{equation*}
c_2(w):\left[0,m(v)\right] \to \nu(L) \text{ is given by } c_2(w)(t) = tw 
\end{equation*}
 and 
\begin{equation*}
c_1:\left[m(v),1\right] \to \nu(L) \text{ is given by } c_1(t) = tv.
\end{equation*}
Note that, because of $w \in C_{m(v)v}$, we have 
\begin{equation*}
\lambda_1(w/\|w\|) = \|w\| = m(v)\|v\| = \lambda_1(v/\|v\|).
\end{equation*}
 
Now assume that we already have defined $Z_w$ if $i(w) \leq n$ in such a way that it consists of all $\eta$-polygons $c = (c_r,\dots,c_1)$ with $c_i:[t_i,t_{i-1}] \To \nu(L)$ satisfying $c_{i+1}(t_i) \in C_{c_i(t_i)}$ for $i \geq 1$. Let $v$ be a normal vector with $i(v) = n+1$. Then for every vector $w$ in the fiber $C_{m(v)v}$ through $m(v)v$ we have $i(w) = i(m(v)v) < i(v)$ and we define the space $Z_v$ to consist of pairs $(d(w),c_1)$ for some $w \in C_{m(v)v}$ with $c_1 :[m(v),1] \to \nu(L)$, defined by $c_1(t) = tv$ and $d(w) : [0,m(v)] \to \nu(L)$ is a linear reparameterization on $[0,m(v)]$ of an element $\tilde d \in Z_w$, i.e. there exists an $\eta$-polygon $\tilde d = (\tilde d_r,\dots,\tilde d_1) \in Z_w$ with $\tilde d_i$ defined on an interval $[\tilde t_i,\tilde t_{i-1}]$, for a partition $0=\tilde t_r < \tilde t_{r-1} < \dots < \tilde t_1< \tilde t_0=1$, and $d(w) = (d_r,\dots,d_1)$ with $d_i:[t_i,t_{i-1}] \To \nu(L)$, given by $d_i(t) = \tilde d_i\left(\frac{t}{m(v)}\right)$, where $t_i = m(v) \tilde t_i$. If we set $c_{i+1} = d_i$, we have that $c= (d(w),c_1) =(c_{r+1},\dots,c_2,c_1)$ with $c_1 = s_{v}|_{[m(v),1]}$ and for $i \geq 2$ and $w_1 = v$ we obtain the formula
\begin{equation*}
c_i(t) = s_{\frac{\|v\|}{\|w_i\|}w_i}(t), t \in \left[m(w_i)\frac{\|w_i\|}{\|v\|}, m(w_{i-1})\frac{\|w_{i-1}\|}{\|v\|}\right] \text{ with } w_i \in C_{m(w_{i-1})w_{i-1}}.
\end{equation*}
\noindent Thus in the above notation $t_i = m(w_i)\frac{\|w_i\|}{\|v\|}$. Note that this is well defined because of $m(w_{i-1})\|w_{i-1}\| = \|w_i\|$ and our definition that $m(w) = 0$ if $i(w) = 0$. We can regard an $\eta$-polygon on $[0,1]$ as a piecewise continuous map $c: [0,1] \To \nu(L)$, defined by $c(0) = 0$ and $c|_{(t_i,t_{i-1}]} = c_i|_{(t_i,t_{i-1}]}$, where, of course, here $c(0) = 0$ means the origin of the normal space that is uniquely defined by $c(\varepsilon)$ for some small number $\varepsilon > 0$. Anyway, there is a well-defined injective map 
\begin{equation*}
\bar Q : Z_v \To P(\nu(L)/\mathfrak C, Q(0) \times Q(v)),
\end{equation*}
\noindent given by $\bar Q(c)|_{[t_i,t_{i-1}]} = Q \circ c_i$ with energy     
\begin{eqnarray*}
E(\bar Q(c)) & = & (1-m(w_1))\|v\|^2 + \sum_{i \geq 2}E(c_i)\\
                  & = & (1-m(w_1))\|v\|^2 + \sum_{i \geq 2}(m(w_{i-1})\frac{\|w_{i-1}\|}{\|v\|} - m(w_i)\frac{\|w_i\|}{\|v\|})\|v\|^2\\
                  & = & \|v\|^2.
\end{eqnarray*}
\noindent Now we define the space $\Delta_v$ to be the image $\bar{Q}(Z_v) \subset P(\nu(L)/\mathfrak C, Q(0) \times Q(v))$ with the relative topology, i.e. induced by the compact open topology on the path space $P(\nu(L)/\mathfrak C, Q(0) \times Q(v))$. With this topology the space $\Delta_v$ is compact, because $Q$ is proper. The map $\bar Q : Z_v \To \Delta_v$ is a bijection and we topologize $Z_v$ by the postulation that this map is a homeomorphism. As mentioned above, we can regard $Z_v$ as a space of piecewise continuous maps. 

We follow this direction and define $e : Z_v \times [0,1] \To \nu(L)$ by $e(c,0) = c(0)$ and $e(c,t) = \lim_{t' \nearrow t}c_i(t')$ if $t \in (t_i,t_{i-1}]$, so that $t \MTo e_t(c)$ is the required map. Let us set $\bar e : \Delta_v \times [0,1] \To \nu(L)/\mathfrak C$ for the continuous evaluation map, given by the prescription $\bar e(\bar Q(c),t) = \bar Q(c)(t)$, and consider the commutative diagram

\[
\begin{xy}
\xymatrix
{
Z_v \times [0,1] \ar[r]^e \ar[d]^{\bar Q \times \mr{id}} & \nu(L) \ar[d]^Q\\
\Delta_v \times [0,1] \ar[r]^{\bar e} & \nu(L)/\mathfrak C\\
}
\end{xy}
\]  

\noindent from which it follows that $e$ is continuous in $(c,t)$ if $\bar Q(c)(t) \notin Q(C)$. If we define $e_t = e(\cdot,t)$, then $e_{m(v)} : Z_v \To \nu(L)$ is also continuous. To see this, we first observe that $e_{m(v)}$ is continuous iff it is continuous considered as a map to the submanifold $C_{m(v)v} \subset S(m(v)\|v\|)$. Thus take an open subset $U \subset C_{m(v)v}$ and note that by definition we have that $e_{m(v)}(c) = w \in U$ iff the image of $c_2|_{[m(w),m(v)]}$ is contained in the set $[m(\cdot),1]\cdot U = \left\{r(w)w \mbox{$|$} r(w) \in [m(w),1], w \in U\right\}$. Because $m$ is bounded away from $1$ on $C_{m(v)}$, we can find an $\varepsilon > 0$ such that $(1-\varepsilon,1)\cdot C_{m(v)} \subset \nu(L) \setminus C$ and because $Q$ is an embedding on $\nu(L) \setminus C$ we have
\begin{equation*}
e_{m(v)}^{-1}(U) = e_{m(v)-\varepsilon/2}^{-1}((1-\varepsilon,1)\cdot U) = \bar Q^{-1}(\bar e_{m(v)-\varepsilon/2}^{-1}(Q((1-\varepsilon,1)\cdot U))),
\end{equation*}
\noindent which is therefore open.\\

The crucial point with regard to our goal is that therefore the map 
\begin{eqnarray*}
pr_v = e_{m(v)} \circ \bar Q^{-1}  :  \Delta_v & \To & C_{m(v)v}\\
 \bar Q(c) & \mapsto & c(m(v))
\end{eqnarray*}
 is continuous, so $\Delta_v$ is given as the (continuous) family of fibers 
\begin{equation*}
\Delta_v = \bigcup_{w \in C_{m(v)v}}pr_v^{-1}(w) \cong \bigcup_{w \in C_{m(v)v}}\Delta_w,
\end{equation*} 
 what enables us to use an inductive argument to verify the right cohomological behavior. For this reason, we identify $pr_v^{-1}(w) \cong \Delta_w$ by restriction and reparametrization, i.e. forgetting the last irrelevant segment we regard a broken geodesic $\bar Q(c)$ in $\Delta_v$ as a path from $Q(0)$ to the furthermost breaking point $Q(C_{m(v)v})$, namely as a path in the space $\Delta_{c(m(v))}$.

By definition, $\Delta_v = \{ s_v \}$ if $i(v) =0$. If $i(v) = 1$, then $C_{m(v)v} \cong S^1$ and we have $pr_v^{-1}(w) \cong \Delta_w \cong \{ s_w \}$ for all $w \in C_{m(v)v}$. Of course, in this case we have $\check H^1(\Delta_v) \cong \Z_2$ and $\check H^k(\Delta_v) = 0$ if $k > 1$. In the general case, we note again that for all $w \in C_{m(v)v}$ we have 
\begin{equation*}
i(w) = i(m(v)v) = i(v) - \mu(m(v)v) = i(v) - \mathrm{dim}(C_{m(v)v}).
\end{equation*}
In particular, if $i(v) > 0$, then $i(w) < i(v)$ for all $w \in C_{m(v)v}$. Thus, because of the fact that $pr_v^{-1}(w) \cong\Delta_w$, we can assume by induction that we have $\check H^{i(w)}(pr_v^{-1}(w)) \cong \mathbb Z_2$ and $\check H^k(pr_v^{-1}(w)) = 0$ if $k > i(w)$ for all $w \in C_{m(v)v}$. Applying Lemma \ref{lem:tool} to the map $pr_v : \Delta_v \To C_{m(v)}$ it then follows  that $\check H^{i(v)}(\Delta_v) \cong \mathbb Z_2$ and $\check H^k(\Delta_v) = 0$ if $k > i(v)$. \\

Coming so far, it remains to prove that the spaces $\Delta_v$ indeed represent linking cycles for generic geodesics $Q \circ s_v$, because using continuity arguments as in Section \ref{sec:SRB} this would imply tautness. But this follows from the fact that, due to \cite{Wa1} and \cite{He}, $\mathfrak C$ defines a smooth distribution on every connected component of the set of regular focal vectors, so that it is more or less obvious by our construction that $Q \circ s_v$ admits a manifold neighborhood in $\Delta_v$ for all $v \in \nu(L)^R$. Further, this neighborhood can be deformed into the local unstable manifold in some Morse chart around $Q \circ s_v$, because of  the following expression of the tangent space that is a direct consequence of our construction. Namely, 
\begin{equation*}
T_{Q \circ s_v}\Delta_v = \bigoplus_{k = 1}^r\mathcal J(t_k),
\end{equation*}
 where $s_v^{-1}(C) = \{t_1,\dots,t_r\}$ and 
$\mathcal J(t_k)$ equals the vector space of continuous vector fields $J$ along $Q \circ s_v$ such that $J|_{[0,t_k]}$ is an $L$-Jacobi field along $Q \circ s_v$ and $J|_{[t_k,1]} \equiv 0$. A direct computation or a look at the proof of the Index Theorem of Morse (cf. \cite{S}) now shows that the projection of  $T_{Q \circ s_v}\Delta_v $ onto the tangent space at $Q \circ s_v$  of the unstable manifold corresponding to some Morse chart  is an isomorphism, that is to say locally around the critical point, $\Delta_v$ can be deformed into the unstable manifold of some Morse chart. In this case, if we denote by $\mathcal P_{L,\eta(v)}$ the space $\mathcal P (M,L \times \eta(v))$, the following commutative diagramm 

\[
\begin{xy}
\xymatrix
{
\check H^{i(v)}(\mathcal P_{L,\eta(v)}^{\|v\|^2}) \ar[r]  & \check H^{i(v)}(\Delta_v)\\
\check H^{i(v)}(\mathcal P_{L,\eta(v)}^{\|v\|^2},\mathcal P_{L,\eta(v)}^{\|v\|^2} \setminus \left\{\eta \circ s_v\right\}) \ar[u] \ar[r]^{ {}\cong} & \check H^{i(v)}(\Delta_v, \Delta_v \setminus \left\{Q \circ s_v\right\}) \ar[u]_{\cong}\\
}
\end{xy}
\]  

\noindent  yields the claim if $v \in \nu(L)^R$ is not a focal vector. Since for manifolds \v{C}ech cohomology is isomorphic to singular cohomology and $\nu(L)^R$ is dense in $\nu(L)$, we deduce, with the same arguments as in the proof of Proposition $2.7$ in \cite{TT}, that the energy $E_q : \mathcal P(M,L \times q) \To \R$ is $\Z_2$-perfect for all points $q$ that are not focal points of $L$.   

\end{proof}

\begin{rem}
As we mentioned in the last section, for compact subsets $(K,L)$ of a manifold $P$ the \v{C}ech cohomolgy groups $\check H^j(K,L)$ are isomorphic to the direct limit 
\begin{equation*}
\lim_{ \longrightarrow } \left\{H^j(U,V)\mbox{$|$}(K,L) \subset (U,V)\right\}
\end{equation*}
\noindent where the limit is taken over open subsets $(U,V) \supset (K,L)$. Therefore, one could also show directly that 
\begin{equation*}
H^{i(v)}(\mathcal P_{L,\eta(v)}^{\|v\|^2},\mathcal P_{L,\eta(v)}^{\|v\|^2} \setminus \left\{\eta \circ s_v\right\}) \To \check H^{i(v)}(\Delta_v, \Delta_v \setminus \left\{\eta \circ s_v\right\}) \To \check H^{i(v)}(\Delta_v)
\end{equation*}
is nontrivial for all the spaces $\Delta_v$ with $v \in \nu(L) \setminus C$, because using the deformation retraction of $P_{L,\eta(v)}^{\|v\|^2+\varepsilon}$ onto the Morse complex one can assume that a neighborhood base of $\eta \circ s_v$ in $\Delta_v$ is contained in some ball around the origin in $\R^{i(v)}$.
\end{rem}

As a direct consequence of the proof of Theorem \ref{thm:A} and the above remark, we obtain the following fact, which was so far not even known in the case of a Euclidean space.

\begin{thm}
\label{cor:MB2}
If a closed submanifold of a complete Riemannian manifold is taut with respect to some field, then it is also $\Z_2$-taut.
\end{thm}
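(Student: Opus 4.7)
The strategy is to repackage Theorem \ref{thm:A}, exploiting the asymmetry between its hypothesis and its conclusion: the hypothesis in (1) involves a field $\F$, but this field enters the proof only through the Morse-Bott condition (2), while the explicit linking-cycle construction used to deduce (1) from (3) produces tautness specifically over $\Z_2$.

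First I would invoke the implication $1 \Rightarrow 2$ of Theorem \ref{thm:A}, proved via Theorem 2.8 of \cite{TT}: $\F$-tautness for \emph{any} field $\F$ forces every energy functional $E_q : \mathcal P(M, L \times q) \To \R$ to be Morse-Bott. Being Morse-Bott is a purely geometric, field-independent property of the critical set (the critical points sit in closed submanifolds whose tangent spaces agree with the kernels of the Hessian), so after this first step the chosen field $\F$ is discarded.

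Next I would feed this into the chain $2 \Rightarrow 3 \Rightarrow 1$ of Theorem \ref{thm:A}. The equivalence $2 \Leftrightarrow 3$ is formal and field-free, obtained by identifying Morse-Bott critical submanifolds of $E_q$ with integral manifolds of the kernel distribution of $d \exp^{\perp}$. The final implication $3 \Rightarrow 1$ then constructs the spaces $\Delta_v$ and verifies they represent linking cycles via Lemma \ref{lem:tool}, whose hypothesis is discharged using Lemma \ref{lem:sheaf} on the triviality of $\Z_2$-sheaves; the latter crucially exploits the fact that $\Z_2$ has a unique nonzero element, so the argument is genuinely special to the coefficient field $\Z_2$. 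The output is perfectness of $E_q$ with respect to $\Z_2$ for every point $q \in M$ that is not a focal point of $L$, which is precisely $\Z_2$-tautness.

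There is no real obstacle beyond unpacking Theorem \ref{thm:A} in this asymmetric manner; the content of the theorem is really a corollary of the proof rather than of the statement of Theorem \ref{thm:A}, phrased to advertise the fact that $\Z_2$ is the universal field witnessing tautness.
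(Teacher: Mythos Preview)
Your proposal is correct and matches the paper's approach exactly: the paper states Theorem~\ref{cor:MB2} as a direct consequence of the proof of Theorem~\ref{thm:A}, and you have identified precisely the asymmetry that makes this work---$\F$-tautness for any field forces the Morse--Bott condition, which is field-independent, and the linking-cycle construction in $3 \Rightarrow 1$ then yields $\Z_2$-tautness specifically because Lemma~\ref{lem:sheaf} and Lemma~\ref{lem:tool} are formulated for $\Z_2$.
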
  

It is worth mentioning that although tautness is defined by means of perfect Morse functions an analogous statement for Morse functions is wrong. This indicates the high degree of geometry involved in this setting. 

\begin{exl}
Consider the unit $3$-sphere $S^3 \subset \mathbb C^2$ with the $\Z_p$-action, generated by 
\begin{equation*}
1\cdot(z,w) = (\mr e^{\frac{i2\pi}{p}}z,\mr e^{\frac{i2\pi q}{p}}w) 
\end{equation*}
with $p$ and $q$ relatively prime. The quotient $S^3/\Z_p$ is known as the \emph{lens space} $L(p,q)$ with fundamental group $\pi_1(L(p,q)) \cong \Z_p$ and $H_k(L(p,q);\Z_p) \cong \Z_p$ for $k=0,1,2,3$. Now, the Morse-Bott function $|z|^2$ is invariant under the $\Z_p$-action so it descends to a Morse-Bott function on $L(p,q)$ with critical set corresponding to the two critical circles $z=0$ and $w=0$ of index $0$ and index $2$, respectively. One can perturb this Morse-Bott function in the neighborhoods of the critical submanifolds by adding a bump function depending on the distance to the respective submanifold times a (perfect) Morse function on the respective circle so that the indices add. This therefore results in a $\Z_p$-perfect Morse function on $L(p,q)$. Now, if $p$ is odd, we have $H_2(L(p,q);\Z_2)= 0$, so that this $\Z_p$-perfect Morse function is not perfect with respect to $\Z_2$. In particular, there are no taut immersions of $L(2k+1,q)$ into a Euclidean space. Indeed, Thorbergsson actually proved in \cite{Tor} that there are no taut immersions of $L(p,q)$ into a Euclidean space except the case of the projective space $\R P^3 = L(2,1)$ by showing that the first nontrivial homology group can only have torsion elements of order two.  
\end{exl}

\section{Taut Foliations}
\label{ch:TF}

Even if there are not many examples of taut submanifolds, a remarkable observation is that they often occur, if at all, in families, which then decompose the ambient space. In this section we therefore focus on taut families as they usually occur, namely on singular Riemannian foliations all of whose leaves are taut. For this reason, we first recall some basic facts about singular Riemannian foliations and make some preliminary observations which we need to prove our second result in Subsection \ref{sec:PQ} that characterizes taut singular Riemannian foliations by means of their quotients.   

\subsection{Singular Riemannian Foliations and Orbifolds}
\label{sec:SRB}

For a more detailed discussion on singular Riemannian foliations and the proofs of the following statements we refer to \cite{Mol} and \cite{LT}.

\begin{dfn} 
Let $\f$ be a partition of a manifold $M^{n+k}$ into connected, injectively immersed submanifolds with maximal dimension $n$. For a point $p \in M$, let $L_p$ denote the element of $\f$ which contains $p$. Set 
\begin{equation*}
T\f = \bigcup_{p \in M} T_pL_p.
\end{equation*}
 Then the partition $\f$ is called a \emph{singular foliation of $M$ of dimension $n$/ codimension $k$} iff the $C^{\infty}(M)$-module $\Gamma(T\f)$ of smooth vector fields $X$ tangential to $\f$, i.e. with $X_p \in T_pL_p$ for all $p \in M$, exhaust $T_pL_p$ for every $p \in M$. We call the elements of $\f$ \emph{leaves}. A leaf is \emph{regular} if it has dimension $n$, otherwise \emph{singular}. A point belonging to a regular leaf is \emph{regular}, otherwise \emph{singular}. By $M_0$ we denote the set of regular points and call it the \emph{regular stratum}. If $(M,g)$ is a Riemannian manifold, a singular foliation is called a \emph{singular Riemannian foliation} if every geodesic in $M$ which intersects one leaf orthogonally intersects every leaf it meets orthogonally.  
\end{dfn}

We sometimes also speak about a singular Riemannian foliation $(M,\f)$, or also $(M,g,\f)$ if we want to abbreviate that $\f$ is a singular Riemannian foliation on the Riemannian manifold $M$, or $(M,g)$.

\begin{exl}
The set of orbits of an isometric Lie group action on a Riemannian manifold $M$ is a singular Riemannian foliation, closed if and only if the group considered as a subgroup of the isometry group is closed.
\end{exl}

For $d \leq n$, denote by $M_d$ the subset of all points $p \in M$ with fixed leaf dimension $\mr{dim}(L_p) = n-d$. Since the dimension of the leaves varies lower semi-continuously, the set $\bigcup_{d'\leq d}M_{n-d'} = \{p \in M \mbox{$|$} \mr{dim}(L_p)\leq d\}$ is closed. Further, $M_d$ is an embedded submanifold of $M$ and the restriction of $\f$ to $M_d$ is a (regular) Riemannian foliation. The main stratum $M_0$ is open, dense and connected if $M$ is connected. All the other singular strata have codimension at least $2$ in $M$. \\

Let $p$ be a point in $(M,g,\f)$ and let $B$ be a small open ball in $L_p$. Then there is a number $\varepsilon > 0$ and a \emph{distinguished tubular neighborhood} $U$ at $p$ so that  $U$ is the diffeomorphic image of the $\varepsilon$-disc bundle $\nu^{\varepsilon}(B)$ under the exponential map and $\pi^{-1}(q)$ is a global transversal of $(U,\f|_U)$ for all $q \in B$, i.e. it meets all the leaves of $\f|_U$ and always transversally, where we denote by $\pi$ the foot point projection. In this case, for each real number $\lambda \in [-1,1] \setminus \{0\}$ the map $h_{\lambda} : U \To U$, given by $h_{\lambda}(\mathrm{exp}(v)) = \exp(\lambda v)$, for all $v \in \nu^{\varepsilon}(B)$, preserves $\f$.

Indeed, the fact that the geodesics perpendicular to one leaf remain perpendicular to the leaves implies that the leaves of a singular Riemannian foliation are locally equidistant.

\begin{dfn}
We say that a singular Riemannian foliation $(M,\f)$ \emph{ has the property} $P$ if every leaf of $\f$ has the property $P$, e.g. $\f$ is closed if all the leaves are closed subspaces of $M$.
\end{dfn}

It is well kown that the leaves of a closed singular Riemannian foliation $\f$ on a complete Riemannian manifold $M$ admit global $\varepsilon$-tubes, so that the distance between two leaves is globally constant. In this case, the quotient $M/\f$ is a complete metric space, where the distance between two points is just the distance between the corresponding leaves as submanifolds of $M$.

\begin{lem}
\label{lem:prop}
Let $(M,\f)$ be a singular Riemannian foliation. Then a leaf $L \in \f$ is embedded if it is closed.
\end{lem}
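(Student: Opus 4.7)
The plan is to reduce the statement to the case of a regular Riemannian foliation via the canonical stratification, and then to establish local embeddedness using transversality with a normal slice inside a distinguished tubular neighborhood.

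Setting $d = \dim M - \dim L$, the leaf $L$ is entirely contained in the stratum $M_d$, which by the structure theory recalled above is an embedded submanifold of $M$ on which $\f|_{M_d}$ is a regular Riemannian foliation. Since $M_d$ is embedded in $M$ and $L$ is closed in $M$, the intersection $L = L \cap M_d$ is also closed in $M_d$. Because embeddedness of $L$ in $M_d$ yields embeddedness of $L$ in $M$, I may assume from the start that $(M, \f)$ is itself a regular Riemannian foliation.

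Now fix $p \in L$ and choose a small intrinsic open ball $B \subset L$ around $p$ together with $\varepsilon > 0$ such that $U = \exp(\nu^\varepsilon(B))$ is a distinguished tubular neighborhood and $\overline{\exp(\nu^{\varepsilon/2}(B))}$ is compact. Write $S_p = \exp_p(\nu_p^\varepsilon(B))$ for the normal slice at $p$. Since $S_p$ is a global transversal of $\f|_U$, at every $q \in L \cap S_p$ the tangent spaces satisfy $T_q L \oplus T_q S_p = T_q M$, so $L$ meets $S_p$ transversally and $L \cap S_p$ is a discrete subset of $S_p$. Closedness of $L$ in $M$ makes $L \cap \exp_p(\overline{\nu_p^{\varepsilon/2}(B)})$ compact, and a compact discrete set is finite. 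Shrinking $\varepsilon$ below the minimal distance from $p$ to any other intersection point leaves $L \cap S_p = \{p\}$, and by the local product structure of the distinguished tube this forces $L \cap U = B$. Hence $B$ is a neighborhood of $p$ in $L$ in both the intrinsic and the subspace topology, proving embeddedness of $L$ at $p$.

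The main technical subtlety is the last step, which uses that for sufficiently small $\varepsilon$ each plaque of $L$ in $U$ corresponds to a unique intersection point with the slice $S_p$. This bijection between plaques and slice intersections requires $\varepsilon$ to be small enough to prevent any leaf of $\f|_U$ from winding around to meet $S_p$ more than once, and is ensured by the Riemannian structure combined with the distinguished tubular neighborhood construction.
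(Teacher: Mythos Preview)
Your proof is correct and follows the same reduction as the paper: pass to the stratum containing $L$, where the restricted foliation is regular, and then treat the regular case. The paper simply cites Molino (p.~22 of \cite{Mol}) for that step, whereas you supply the standard direct argument via transversality with a normal slice in a distinguished tubular neighborhood. One small indexing slip: with the paper's convention $M_d = \{p : \dim L_p = n - d\}$ (where $n = \dim \f$), the stratum containing $L$ is $M_{n - \dim L}$, not $M_{\dim M - \dim L}$; this does not affect the argument.
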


\begin{proof}
Let $r$ be the dimension of $L$. Then $M_{n-r}$ is an embedded submanifold of $M$ and $\f|_{M_{n-r}}$ is a regular Riemannian foliation. Due to Molino (cf. p.22 of \cite{Mol}), the statement is true for $(M_{n-r},\f|_{M_{n-r}})$, so it is true for $(M,\f)$.
\end{proof} 

Again, let $p \in M$ be a point and let $B$ be a small open neighborhood in the leaf $L_p$ through $p$. Then there is  a distinguished tubular neighborhood $U$ around $p$ such that there is an embedding $\phi$ of $U$ into the tangent spaces $T_pM$ with $d\phi_p = \mathrm{Id}$ and a singular Riemannian foliation $\mathfrak{F}_p$ on $T_pM$, called \emph{infinitesimal singular Riemannian foliation of $\f$ at the point $p$} that coincides with $\phi_*\f$ on $\phi(U)$ and such that $\mathfrak{F}_p$ is invariant under the homotheties $r_{\lambda} : T_pM \To T_pM$, given by $r_{\lambda}(v) = \lambda v,$ for all $\lambda \neq 0$. If $\f$ is locally closed at $p$, the quotient $T_pM/\mf F_p$ is non-negatively curved in the sense of Alexandrov, and is the tangent space of $U/\f$ at the leaf $L \cap U \in U/\f$ and the local quotient $U/\f$ has curvature bounded below. The inclusion $U \To M$ induces a map between the quotients $U/\f \To M/\f$, which is open and finite-to-one if $\f$ is closed. So, assume that $\f$ is closed and $M$ is complete. Then the quotient $M/\f$ is a complete metric space with the metric induced by the distance of the leaves of $\f$ (as submanifolds). Let $T$ be a global $\varepsilon$-tube around $L$, where  $\varepsilon$ is chosen as in the definition of the distinguished neighborhood $U$ above, then $T$ is saturated, i.e. it is a union of leaves, and $T/\f$ is a neighborhood of $L$ in the global quotient $M/\f$. In this case, there is a finite group of isometries $\Gamma$ acting on the local quotient $U/\f$ that fixes the plaque $L \cap U \in U/\f$ such that $U/\f$ is isometric to $T/\f$.\\

A concept that is closely related to that of (Riemannian) foliations is the notion of (Riemannian) orbifolds. We therefore summarize the basic facts and defintions in a reminder and discuss orbifold coverings thereafter in more detail, because we will need this later on.

\begin{red}
As a manifold is locally modeled on open sets $U$ of $\R^n$ orbifolds are locally modeled on finite quotients $U/G$, where $G \subset \mr{Diff}(U)$ is a finite group. Thus an \emph{orbifold} $(B, \mathfrak U)$ is a Hausdorff topological space $B$ together with a maximal \emph{orbifold atlas} $\mathfrak U$ consisting of compatible orbifolds charts $(U,G,\varphi)$, i.e. the mapping $\varphi : U \to B$ induces a homeomorphism $U/G \cong \varphi(U)$ with $\varphi(U)$ open. The compatibility condition just means that the transition maps $(U_1,G_1,\varphi_1) \to (U_2,G_2,\varphi_2)$ commute with the respective projections. In this case there is (up to isomorphism) a well defined notion of \emph{isotropy group} for the points in $B$ as the isotropy group of any preimage point in an arbitrary chart. The points in $B$ with trivial isotropy group are called \emph{regular} otherwise \emph{singular}. If in addition $B$ is a metric space and there is a Riemannian metric on the $U_i$ such that $G_i \subset \mr{Iso}(U_i)$ and the homeomorphisms $U_i/G_i \to \varphi_i(U_i)$ are isometric, then $B$ is called a \emph{Riemannian orbifold}. Because the terms of Riemannian geometry are defined locally and are invariant under isometries, there is an obvious way to generalize the notions of Riemannian geometry to Riemannian orbifolds as they are locally isometric to finite quotients. See \cite{LT} for a more detailed discussion.  
\end{red}

\begin{dfn} 
A \emph{covering orbifold} of an orbifold $(B,\mathfrak{U})$ is an orbifold $(\widetilde{B},\widetilde{\mathfrak{U}})$, with a map $P: \widetilde{B} \To B$ between the underlying spaces such that each point $b \in B$ has a neighborhood $V = U/G$ for which each component $\widetilde V_i$ of $P^{-1}(V)$ is isomorphic to $U/G_i$ for some subgroup $G_i \subset G$ and such that the isomorphisms commute with the projection $P$.
\end{dfn}

It is well known that any orbifold admits a \emph{universal orbifold covering}, that is to say for a regular base point $b_0 \in B$, there exists a pointed connected covering orbifold $P: \widetilde{B} \To B$ with base point $\tilde{b}_0$ projecting to $b_0$ such that for any other covering orbifold $P': B' \To B$ with base point $b'_0$ and $P'(b'_0) = b_0$, there is a lift $Q: \widetilde B \To B'$ of $P$ along $P'$ to an orbifold covering. In particular, a universal orbifold covering is regular in the sense that its group of deck transformations acts simply transitive on a generic fiber. This group is denoted by $\pi_1^{orb}$ and is called the \emph{orbifold fundamental group}.  

\begin{dfn}
An orbifold is called \emph{good} if it is a global quotient or, equivalently, if the universal covering orbifold is a manifold, i.e. there are no singular points.
\end{dfn}

In the case of a Riemannian orbifold all the definitions are to be modified in the obvious manner, so that we can speak about \emph{Riemannian orbifold coverings} and \emph{good Riemannian orbifolds}. Of course, the statement about the universal orbifold covering also holds in the Riemannian category.

\begin{exl}
Again let $N$ be a Riemannian manifold and let $\Gamma$ be a discrete group of isometries of $N$. If $\widetilde N$ denotes the universal covering of $N$, then $\widetilde N$ is the universal Riemannian covering orbifold of $N/\Gamma$. This can be seen, for instance, by the observation that every covering orbifold of $N/\Gamma$ has to be of the form $\widetilde{N}/\widetilde{\Gamma}'$, where $\widetilde{\Gamma}'$ is a subgroup of the group $\widetilde{\Gamma}$ of deck transformations of $\widetilde N$ over $N/\Gamma$. Hence the two definitions of a good orbifold are indeed equivalent.
\end{exl}

Because we will use it in the following, we will formulate the next observation as a lemma.

\begin{lem}
\label{lem:orbicov}
Let $\f$ be a closed (regular) Riemannian foliation on a complete Riemannian manifold $M$ and let $\widetilde{\f}$ denote its lift to the universal Riemannian co\-vering $\widetilde M$ of $M$. Then the quotient $\widetilde M/\widetilde{\f}$ is a complete Riemannian manifold, i.e. $\widetilde{\f}$ is simple, if $M/\f$ is a good Riemannian orbifold. In particular, the orbifold covering $\widetilde M/\widetilde{\f} \To M/\f$ coincides with the universal Riemannian orbifold covering.
\end{lem}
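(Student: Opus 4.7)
The plan is to identify $\widetilde M/\widetilde{\f}$ with the universal Riemannian orbifold covering $N$ of $M/\f$; since $N$ is a manifold by hypothesis, this simultaneously yields the simplicity of $\widetilde{\f}$ and the final assertion about orbifold coverings.

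First I would verify that $\widetilde M/\widetilde{\f}$ carries a natural Riemannian orbifold structure. The lift $\widetilde{\f}$ is a closed Riemannian foliation on the complete manifold $\widetilde M$ (both properties pass to coverings), and the holonomy of every leaf $\widetilde L \subset \widetilde M$ injects into the finite holonomy group of its image leaf in $\f$, so the local leaf quotients are orbifold charts. Using the local product structure of distinguished tubular neighborhoods recalled in Section \ref{sec:SRB}, the induced map $\bar\pi : \widetilde M/\widetilde{\f} \To M/\f$ is then a Riemannian orbifold covering: over an orbifold chart $\Sigma/H_L$ of $M/\f$ associated to a leaf $L$, each component of the preimage is of the form $\Sigma/H_{\widetilde L}$ for a subgroup $H_{\widetilde L} \subseteq H_L$ (determined by those holonomies whose defining loops lift to loops under the covering $\widetilde M \To M$).

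Since $M/\f$ is good by hypothesis, its universal Riemannian orbifold covering $P : N \To M/\f$ has $N$ a manifold and, by universality, factors through $\bar\pi$ via an orbifold covering $P' : N \To \widetilde M/\widetilde{\f}$. The core step is to show $P'$ is an isomorphism. For this I would argue that $\widetilde M/\widetilde{\f}$ is itself simply connected as an orbifold, invoking the Van Kampen-type exact sequence for the orbifold fundamental group of a foliation quotient,
\begin{equation*}
\pi_1(\widetilde L) \To \pi_1(\widetilde M) \To \pi_1^{orb}(\widetilde M/\widetilde{\f}) \To 1,
\end{equation*}
applicable to any leaf $\widetilde L$ of a closed Riemannian foliation with orbifold quotient. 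Since $\pi_1(\widetilde M) = 1$, this forces $\pi_1^{orb}(\widetilde M/\widetilde{\f}) = 1$, so the orbifold covering $P'$ is an isomorphism. Consequently $\widetilde M/\widetilde{\f}$ is isometric to the manifold $N$, hence $\widetilde{\f}$ is simple and $\bar\pi$ coincides with the universal Riemannian orbifold covering.

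The main obstacle is justifying the exact sequence above. A complementary approach avoiding it would be to form the pullback $\widehat M := \widetilde M \times_{\widetilde M/\widetilde{\f}} N$, observe that it is a manifold (the orbifold-chart presentation of $N \To \widetilde M/\widetilde{\f}$ unfolds precisely the isotropy appearing in $\widetilde M/\widetilde{\f}$), and note that the induced Riemannian covering $\widehat M \To \widetilde M$ must be trivial by simple-connectedness of $\widetilde M$; this shows $P'$ has a single sheet and hence is an isomorphism. Either way, the technical subtlety is controlling how the orbifold isotropy of $\widetilde M/\widetilde{\f}$ interacts with the foliation structure, which ultimately reduces to the compatibility between leaf holonomy and orbifold isotropy recalled at the start of this section.
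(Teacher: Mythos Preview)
Your proposal is correct and follows essentially the same route as the paper: both arguments reduce to showing $\pi_1^{orb}(\widetilde M/\widetilde{\f}) = 1$ via the surjection $\pi_1(\widetilde M) \To \pi_1^{orb}(\widetilde M/\widetilde{\f})$, and then conclude that the good orbifold $\widetilde M/\widetilde{\f}$ coincides with its universal orbifold covering, hence is a manifold. The only difference is presentational: what you flag as ``the main obstacle'' (the exact sequence) is precisely what the paper disposes of by a direct citation to Haefliger and Salem for the surjective homomorphism $\pi_1(\widetilde M) \To \pi_1^{orb}(\widetilde M/\widetilde{\f})$, so you may simply invoke those references rather than your alternative pullback construction.
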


\begin{proof}
Since $\f$ is closed its lift $\widetilde{\f}$ is closed too (cf. Lemma \ref{tlem:1}), and the leaves of $\widetilde{\f}$ admit global $\varepsilon$-tubes, because $\widetilde M$ is complete. Due to \cite{Hae} or \cite{Sal}, there is a surjective homomorphism $\pi_1(\widetilde M) \To \pi_1^{orb}(\widetilde M/\widetilde{\f})$, where the latter group is the group of deck transformations of the universal orbifold covering of $\widetilde M/\widetilde{\f}$. Now, if $M/\f$ is a good Riemannian orbifold, its branched cover $\widetilde M/\widetilde{\f}$ is a good Riemannian orbifold, too. But then $\pi_1^{orb}(\widetilde M/\widetilde{\f}) = 1$ implies that $\widetilde M/\widetilde{\f}$ already coincide with its universal covering orbifold and is therefore a manifold. 
\end{proof}

\subsection{Simplifications and Preliminaries}
\label{sec:SP} 

In this section we give some preliminary results to simplify the discussion of taut foliations, e.g. we will see that one can always assume that the manifold is simply connected (cf. Lemma \ref{tlem:2}) and that a dense family of leaves forces a foliation to be taut (cf. Corollary \ref{tlem:7}). 

\begin{dfn}
Let $\mathcal F$ be singular Riemannian foliation on a complete Riemannian manifold $M$. If $\mathcal{F}$ is closed, we call $\f$ \emph{taut} if every leaf of $\f$ is taut. 
\end{dfn}

Note that if $\f$ is closed, then by Lemma \ref{lem:prop} all the leaves are embedded submanifolds. Further, if $\f$ is the trivial foliation given by the points of $M$ and $\f$ is taut, then, as already defined in Section \ref{sec:LC}, we call $M$ \emph{pointwise taut}.\\

In order to prove tautness of a foliation, one can always assume that $M$ is simply connected. To see this, we first need the closeness property of lifts.

\begin{lem}
\label{tlem:1}
Let $\mathcal{F}$ be closed and let $\pi : N \To M$ be a covering map. Then the lift $\widetilde{\mathcal{F}}$ of $\mathcal{F}$ given by the involutive singular distribution $T\widetilde{\mathcal{F}} = \pi^*(T\mathcal{F})$ is closed.
\end{lem}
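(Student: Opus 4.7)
The plan is to show directly that every leaf of $\widetilde{\mathcal F}$ is a closed subset of $N$, using the covering map to reduce the question to closedness of leaves of $\mathcal F$ on $M$.

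First I would identify the leaves of $\widetilde{\mathcal F}$. Since $\pi$ is a local diffeomorphism and $T\widetilde{\mathcal F}=\pi^{*}(T\mathcal F)$, the maximal connected integral manifolds of $\pi^{*}(T\mathcal F)$ through a point $\widetilde p$ are exactly the connected components of $\pi^{-1}(L)$, where $L$ is the leaf of $\mathcal F$ containing $\pi(\widetilde p)$. So fix a leaf $\widetilde L\subset N$ of $\widetilde{\mathcal F}$ and let $L=\pi(\widetilde L)$, which is a leaf of $\mathcal F$ and, by hypothesis together with Lemma \ref{lem:prop}, a closed embedded submanifold of $M$.

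Next I would argue closedness by sequences. Take $\widetilde x_n\in \widetilde L$ with $\widetilde x_n\to \widetilde x$ in $N$. Projecting gives $\pi(\widetilde x_n)\in L$ with $\pi(\widetilde x_n)\to \pi(\widetilde x)$, and closedness of $L$ yields $\pi(\widetilde x)\in L$, i.e.\ $\widetilde x\in \pi^{-1}(L)$. It remains to show that $\widetilde x$ lies in the particular connected component $\widetilde L$ of $\pi^{-1}(L)$. Since $L$ is embedded, I can choose an evenly covered open neighborhood $V\subset M$ of $\pi(\widetilde x)$, small enough that $L\cap V$ is path-connected. Write $\pi^{-1}(V)=\bigsqcup_i U_i$ with each $\pi|_{U_i}\colon U_i\to V$ a homeomorphism, and let $U_0$ be the sheet containing $\widetilde x$. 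For $n$ sufficiently large, $\widetilde x_n\in U_0$, and since $\pi^{-1}(L)\cap U_0=(\pi|_{U_0})^{-1}(L\cap V)$ is path-connected, $\widetilde x$ and $\widetilde x_n$ lie in the same connected component of $\pi^{-1}(L)$; that component is $\widetilde L$, so $\widetilde x\in\widetilde L$.

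The only point requiring care is the choice of $V$ with $L\cap V$ path-connected; this is exactly where the input ``$\mathcal F$ closed $\Rightarrow$ leaves embedded'' enters through Lemma \ref{lem:prop}. Once $L$ is embedded, such $V$ exists by standard submanifold charts, and the covering-space picture then directly transports connectedness from $L\cap V$ to $\pi^{-1}(L)\cap U_0$, completing the argument.
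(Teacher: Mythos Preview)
Your proof is correct and shares the paper's core observation: each leaf $\widetilde L$ of $\widetilde{\mathcal F}$ is a connected component of the closed set $\pi^{-1}(L)$, where $L = \pi(\widetilde L)$. However, you work harder than necessary in the second half. Once you know $\widetilde L$ is a connected component of $\pi^{-1}(L)$, you are done by the elementary topological fact that connected components of any space are closed in that space (the closure of a connected set is connected), and a closed subset of the closed set $\pi^{-1}(L) \subset N$ is closed in $N$. This is exactly what the paper does, in one sentence. Your sequence argument and the appeal to Lemma~\ref{lem:prop} (embeddedness of $L$) to obtain a path-connected $L \cap V$ are therefore not needed; the argument goes through for any closed $L$, embedded or not.
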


\begin{rem} Note that the converse is false, as one can see by the dense torus foliation induced by the submersion $f : \R^2 \To \R, (x,y) \MTo y - \lambda x$, where $\lambda$ is irrational.
\end{rem}   

\begin{proof}
For every leaf $L$ of $\f$, the preimage $\pi^{-1}(L) = \bigcup_i \widetilde{L}_i$ is a union of leaves of $\widetilde{\f}$ and the restriction $\pi|_{\widetilde{L}_i} : \widetilde{L}_i \To L$ is a covering projection for each $i$. Thus each leaf $\widetilde{L} \in \widetilde{\f}$ is a connected component of the closed saturated set $\pi^{-1}(\pi(\widetilde{L}))$ and hence closed. 
\end{proof}

Assume that $f: N \To M$ is a Riemannian submersion between complete Riemannian manifolds and $L \subset M$ is a closed submanifold. Then, by \cite{H}, the map $f: N \To M$ is a locally trivial fiber bundle and therefore, for any point $\bar q \in f^{-1}(q)$, the spaces $\mathcal P(N,f^{-1}(L) \times \bar{q})$ and $\mathcal P(M,L \times q)$ are homotopy equivalent. Since $f$ yields a 1:1 correspondece between the critical points and preserves their indices (cf. Lemma 6.1 in \cite{HLO}), we obtain

\begin{lem}
\label{tlem:1b}
If $f: N \To M$ is a Riemannian submersion between complete Riemannian manifolds and $L \subset M$ is a closed submanifold, then $L$ is taut if and only if $f^{-1}(L)$ is taut.
\end{lem}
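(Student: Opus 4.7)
My plan is to build the proof directly on the discussion preceding the lemma, which already identifies the three ingredients needed: a fiber bundle structure coming from Hermann's theorem, homotopy equivalence of the two relevant path spaces, and a bijection between critical points that preserves Morse indices. First I would invoke \cite{H} to conclude that $f : N \To M$ is a locally trivial fiber bundle, and then observe that composition with $f$ yields a map
$f_{*} : \mathcal{P}(N, f^{-1}(L) \times \bar q) \To \mathcal{P}(M, L \times q)$
which is a Hurewicz fibration. By a standard fibration argument this map is a homotopy equivalence, so the two path spaces have identical Betti numbers in every degree.

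Next I would match the critical points of the two energy functionals via horizontal lifts. Because every fiber of $f$ that meets $f^{-1}(L)$ is entirely contained in $f^{-1}(L)$, the tangent space $T_{\tilde p}(f^{-1}(L))$ at any $\tilde p \in f^{-1}(L)$ splits as the vertical subspace plus the horizontal lift of $T_{f(\tilde p)} L$; hence $\nu(f^{-1}(L))$ is purely horizontal and is mapped isomorphically onto $\nu(L)$ by $df$. Since horizontal geodesics project to geodesics in a Riemannian submersion, the horizontal lift ending at $\bar q$ of any $L$-geodesic ending at $q$ is an $f^{-1}(L)$-geodesic, and conversely every $f^{-1}(L)$-geodesic projects to an $L$-geodesic. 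This yields the required bijection between the critical sets of $E_q$ and $E_{\bar q}$, and Lemma 6.1 of \cite{HLO} tells me that it preserves Morse indices because focal vectors in $\nu(L)$ correspond under horizontal lifting to focal vectors in $\nu(f^{-1}(L))$ with the same multiplicities. The same correspondence shows that $q$ is a focal point of $L$ along some normal geodesic if and only if some (equivalently every) preimage $\bar q \in f^{-1}(q)$ is a focal point of $f^{-1}(L)$.

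Putting the pieces together, for any $q$ that is not a focal point of $L$, the Morse function $E_q$ is perfect if and only if $E_{\bar q}$ is perfect: the homotopy equivalence identifies the Betti numbers of the sublevel-set pairs, while the critical-point correspondence identifies, index by index, the number of critical points in each sublevel slab. Running this equivalence over all non-focal points of $L$, equivalently all non-focal points of $f^{-1}(L)$, yields the lemma. I expect the only genuinely delicate step to be the first one -- the homotopy equivalence $f_{*} : \mathcal{P}(N, f^{-1}(L) \times \bar q) \simeq \mathcal{P}(M, L \times q)$ -- where Hermann's theorem is essential and some care is needed with the constrained $H^{1}$-topologies; every subsequent step is a routine consequence of the way Riemannian submersions interact with horizontal geodesics and focal data.
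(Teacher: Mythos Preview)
Your proposal is correct and follows essentially the same approach as the paper: the paper's argument (given in the paragraph immediately preceding the lemma) consists of exactly the two ingredients you isolate---the homotopy equivalence of the path spaces coming from Hermann's theorem, and the index-preserving bijection of critical points via horizontal lifting, citing Lemma~6.1 of \cite{HLO}. Your write-up simply fills in the details the paper leaves implicit, in particular the description of $\nu(f^{-1}(L))$ as the horizontal lift of $\nu(L)$, the correspondence of focal points, and the observation that the comparison must be made sublevel by sublevel (which works because the horizontal-lift inverse to $f_*$ is energy-preserving and $f_*$ itself is energy-nonincreasing).
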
 

Now, since the homology of a path connected component injects in the homology of the whole space, it is not hard to see that a union of connected, closed submanifolds is taut if and only if its components are taut. So that we deduce   

\begin{lem}
\label{tlem:2}
Let $\pi: N \To M$ be a Riemannian covering and let $M$ be complete. If $\mathcal{F}$ is closed, then $\mathcal{F}$ is $\F$-taut if and only if the lift $\widetilde{\mathcal{F}}$ of $\mathcal{F}$ to $N$ is $\F$-taut. 
\end{lem}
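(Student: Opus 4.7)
The plan is to reduce the statement to a leaf-by-leaf application of Lemma \ref{tlem:1b} combined with the preceding observation that tautness of a disjoint union of closed submanifolds is equivalent to tautness of each component. The only small piece of geometric input needed is that a Riemannian covering is, in particular, a Riemannian submersion and that $N$ inherits completeness from $M$; both facts are standard.

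More concretely, first I would note that by Lemma \ref{tlem:1} the lift $\widetilde{\mathcal F}$ is closed, and from (the proof of) that lemma the leaves of $\widetilde{\mathcal F}$ are exactly the connected components of the sets $\pi^{-1}(L)$ as $L$ ranges over the leaves of $\mathcal F$; in particular each leaf of $\mathcal F$ (hence each leaf of $\widetilde{\mathcal F}$) is a closed embedded submanifold by Lemma \ref{lem:prop}. Second, for a fixed leaf $L \in \mathcal F$, I would apply Lemma \ref{tlem:1b} to the Riemannian covering $\pi : N \to M$ and the closed submanifold $L$, which yields that $L$ is $\mathbb F$-taut if and only if $\pi^{-1}(L)$ is $\mathbb F$-taut.

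Third, I would invoke the homology-injection observation just preceding the statement: since the path components of $\pi^{-1}(L) = \bigsqcup_i \widetilde L_i$ are exactly the leaves $\widetilde L_i \in \widetilde{\mathcal F}$ lying over $L$, the disjoint union $\pi^{-1}(L)$ is $\mathbb F$-taut if and only if every $\widetilde L_i$ is $\mathbb F$-taut. Chaining these two equivalences gives that $L$ is taut if and only if every leaf of $\widetilde{\mathcal F}$ projecting to $L$ is taut. Taking the conjunction over all leaves $L$ of $\mathcal F$, and using that every leaf of $\widetilde{\mathcal F}$ arises as such a component, yields the desired equivalence.

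I do not expect any real obstacle here; the only point worth being careful about is verifying that Lemma \ref{tlem:1b} applies, i.e.\ that $\pi$ is a Riemannian submersion between complete Riemannian manifolds (which follows since $N$, as a Riemannian cover of a complete manifold, is itself complete) and that the leaves of $\mathcal F$ really are closed embedded submanifolds, so that the earlier lemmas are legitimately applicable.
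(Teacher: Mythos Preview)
Your proposal is correct and follows essentially the same approach as the paper, which deduces the lemma directly from Lemma~\ref{tlem:1b} together with the observation (stated just before the lemma) that a union of connected closed submanifolds is taut if and only if each component is taut. The paper does not spell out the argument further, so your write-up is in fact a faithful expansion of the intended proof.
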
  

Given a closed singular Riemannian foliation $\f$ on a complete Riemannian manifold $M$, every leaf posses a global $\varepsilon$-tube. For a regular leaf $L$ with such a global tube, the restriction of the foot point projection on a nearby regular leaf induces a finite covering map onto $L$. We say that $L$ has \emph{trivial holonomy} if all these coverings are diffeomorphisms. It is well known that the set of regular points whose leaves have trivial holonomy is open and dense in $M$. 

In particular, all regular leaves of $\f$ have trivial holonomy, that is to say that the quotient $M_0/\f$ is a Riemannian manifold, if the foliation is taut and $M$ is simply connected. To see this, for $p,q \in M$, let $\Omega_{p,q}(M)$ denote the space of all paths from $p$ to $q$. Then $\Omega_{p,q}(M) \simeq \Omega_{q,q}(M)$ and the long exact sequence of the path space fibration gives $\pi_i(M) \cong \pi_{i-1}(\Omega_{q,q}(M))$, what implies that $\Omega_{p,q}(M)$ is connected.

The fibration $\mathcal P(M,L \times q) \To L$ given by $c \MTo c(0)$ gives the below part of the corresponding long exact homotopy sequence
\begin{equation*}
\pi_0(\Omega_{q,q}(M)) \To \pi_0(\mathcal{P}(M,L \times q)) \To \pi_0(L) \To 1.
\end{equation*}
\noindent Thus $\mathcal{P}(M,L \times q)$ is connected. Now a leaf with nontrivial holonomy would yield at least two local minima for the energy on the path space of a neighboring generic leaf, i.e. a leaf without holonomy. By tautness, all the maps in homology are injective what clearly contradicts our connectedness observation.

We are now able to state a characterisation of taut regular foliations, that indeed also follows from our second main result, Theorem \ref{thm:B} below. For the notion of Riemannian orbifolds see Section \ref{sec:SP}. 
 
\begin{lem}
\label{thm:main}
Let $\f$ be a closed (regular) Riemannian foliation on a complete Riemannian manifold $M$. Then $\f$ is $\F$-taut if and only if the quotient $M/\f$ is a good Riemannian orbifold with a pointwise $\F$-taut universal covering orbifold, i.e. $M/\f$ is isometric to $N/\Gamma$ with a simply connected Riemannian manifold $N$ all of whose points are $\F-$taut and $\Gamma \subset \mr{Iso}(N)$ is a discrete subgroup of isometries.
\end{lem}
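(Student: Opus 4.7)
The plan is to pass to the universal Riemannian cover and then identify $M/\f$ with a quotient of a pointwise taut manifold by a discrete isometry group. By Lemmas \ref{tlem:1} and \ref{tlem:2} I can replace $(M,\f)$ by its universal Riemannian cover $(\widetilde M,\widetilde{\f})$: the lifted foliation is again closed, and it is $\F$-taut if and only if $\f$ is. The deck transformation group $\Gamma'$ of $\widetilde M \to M$ acts by isometries preserving $\widetilde{\f}$ and therefore descends to a group $\Gamma$ of isometries of $\widetilde M/\widetilde{\f}$ with $M/\f$ isometric to $(\widetilde M/\widetilde{\f})/\Gamma$; discreteness of $\Gamma$ will follow from the proper discontinuity of the $\Gamma'$-action on $\widetilde M$.

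For the direction ($\Rightarrow$) I would invoke the argument sketched in the paragraph immediately preceding the lemma: if $\widetilde{\f}$ is $\F$-taut on the simply connected $\widetilde M$, then path-connectedness of $\mathcal P(\widetilde M, L\times q)$ combined with $\F$-perfectness of the energy rules out multiple local minima on path spaces of nearby regular leaves, forcing every leaf to have trivial holonomy. Since $\widetilde{\f}$ is regular, this means $N:=\widetilde M/\widetilde{\f}$ is a complete Riemannian manifold and the projection $p:\widetilde M \to N$ is a Riemannian submersion whose fibers are exactly the leaves of $\widetilde{\f}$. Applying Lemma \ref{tlem:1b} to the closed submanifolds $\{x\}\subset N$ then translates $\F$-tautness of every leaf of $\widetilde{\f}$ directly into $\F$-tautness of every point of $N$, giving the desired description $M/\f \cong N/\Gamma$.

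For the converse ($\Leftarrow$) I start from $M/\f \cong N/\Gamma$ with $N$ simply connected and pointwise $\F$-taut. Lemma \ref{lem:orbicov} shows that $\widetilde M/\widetilde{\f}$ is already a Riemannian manifold and, being a simply connected Riemannian manifold orbifold cover of the good orbifold $M/\f$, must coincide with $N$ by uniqueness of the universal covering orbifold. Again $p:\widetilde M \to N$ is a Riemannian submersion whose fibers are the leaves of $\widetilde{\f}$, and Lemma \ref{tlem:1b} now transports pointwise $\F$-tautness of $N$ back to $\F$-tautness of each leaf of $\widetilde{\f}$, so that $\f$ is $\F$-taut by Lemma \ref{tlem:2}.

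The technical heart of the argument is the step ``$\F$-taut $\Rightarrow$ trivial holonomy of every leaf'' needed in the $\Rightarrow$ direction, since this is where the Morse-theoretic hypothesis is genuinely used. Once this is in place, both directions become a formal combination of Lemma \ref{lem:orbicov} (structure of the universal orbifold cover), Lemma \ref{tlem:1b} (transfer of tautness across a Riemannian submersion) and Lemma \ref{tlem:2} (lift-invariance of tautness), with the remaining check being only that the descended action $\Gamma$ is a discrete subgroup of $\mr{Iso}(N)$.
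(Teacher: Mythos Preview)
Your proposal is correct and follows essentially the same route as the paper: lift to the universal cover via Lemma~\ref{tlem:2}, use the holonomy argument from the paragraph preceding the lemma (for $\Rightarrow$) respectively Lemma~\ref{lem:orbicov} (for $\Leftarrow$) to see that $\widetilde{\f}$ is simple, and then transfer tautness across the Riemannian submersion $\widetilde M \to \widetilde M/\widetilde{\f}$ via Lemma~\ref{tlem:1b}. The one point the paper makes explicit that you omit in the $(\Rightarrow)$ direction is that $N=\widetilde M/\widetilde{\f}$ is $1$-connected, which follows from the long exact homotopy sequence of the fibration $\widetilde M \to N$ and is needed both to match the statement of the lemma and to identify $N$ with the universal orbifold cover of $M/\f$.
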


\begin{proof}
Let $\widetilde{\f}$ denote the lift of $\f$ to the universal cover $\widetilde{M}$ of $M$. Then, by Lemma \ref{tlem:2}, $\widetilde{\f}$ is taut if and only if $\f$ is taut. By the discussion above, if $\widetilde{\f}$ is taut, then it is simple, i.e. given by the fibers of a Riemannian submersion. So if $\f$ is taut, the quotient map $\widetilde{M} \To \widetilde{M}/\widetilde{\f}$ is a Riemannian submersion between complete Riemannian manifolds and $\widetilde{M}/\widetilde{\f}$ is $1$-connected by the exact sequence for fibrations. In this case, the map $\widetilde M/\widetilde{\f} \To M/\f$ coincide with the universal orbifold covering. On the other hand, assume that $M/\f$ is a good Riemannian orbifold. Then, due to Lemma \ref{lem:orbicov}, $\widetilde{\f}$ is simple, that is to say $\widetilde{M}/\widetilde{\f}$ is a Riemannian manifold and we can reduce the problem to Lemma \ref{tlem:1b}. 

\end{proof} 

\begin{rem}
At the end of Section \ref{sec:PQ} we prove the corresponding statement for the class of foliations whose quotients are orbifolds and coefficients in $\Z_2$.
\end{rem}

We end this section with some genericity results.

\begin{lem}
\label{tlem:3}
Let $\f$ be closed. Then $L \in \f$ is $\F$-taut if and only if the energy functional $E_q : \mathcal{P}(M, L \times q) \To \R$ is an $\F$-perfect Morse function for all non $L$-focal regular points $q \in M$. Further, $\f$ is $\F$-taut if and only if all regular leaves are $\F$-taut. 
\end{lem}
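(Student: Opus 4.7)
The plan is to address each direction of each assertion in turn, using the first assertion as the reduction tool for the second.

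For the first assertion, one direction is immediate from the definition of tautness, which requires $E_q$ to be $\F$-perfect for every non-$L$-focal point $q$. For the converse, I would exploit the fact that the set of regular non-$L$-focal points is open and dense in the set of all non-$L$-focal points: the regular stratum $M_0$ is open and dense in $M$, and the $L$-focal locus has measure zero as the image of the set where $d\exp^{\perp}$ drops rank. Given any non-$L$-focal $q$, for $q'$ sufficiently close to $q$ the path spaces $\mathcal P(M, L\times q)$ and $\mathcal P(M, L\times q')$ are canonically homotopy equivalent via concatenation with a short minimizing geodesic, so they have the same Betti numbers. Moreover, if both $q$ and $q'$ are non-focal, the critical points of $E_q$ and $E_{q'}$ are in an index-preserving bijection on any fixed energy sublevel, by non-degeneracy of the Hessian and the implicit function theorem applied to the critical-point equation inside the finite-dimensional broken-geodesic approximations recalled in Section \ref{sec:LC}. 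Thus perfectness at $q'$ transfers to $q$ and the first claim follows.

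For the second assertion, the "only if" direction is tautological. For the "if" direction, assume every regular leaf of $\f$ is $\F$-taut and let $L\in\f$ be an arbitrary, possibly singular, leaf. By the first assertion just proved it suffices to check that $E_q$ is $\F$-perfect on $\mathcal P(M, L\times q)$ for regular non-$L$-focal points $q$. Here I would invoke Theorem \ref{thm:A}: tautness of $L$ is equivalent to integrability of the fibers of $\exp^{\perp}:\nu(L)\to M$. Since $\f$ is closed, $L$ admits a global tubular neighborhood $T$, and the foot-point projection $\pi:L'\to L$ from every regular leaf $L'\subset T$ is a locally trivial fiber bundle whose fibers are homogeneous pieces of the slice representation at $L$. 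The normal bundles $\nu(L')$ of nearby regular leaves assemble into a continuous family whose limit is $\nu(L)$, and by Theorem \ref{thm:A} the fibers of $\exp^{\perp}|_{\nu(L')}$ are integrable for every such $L'$. The goal then is to pass this integrability property to the limit and deduce that the fibers of $\exp^{\perp}|_{\nu(L)}$ are unions of submanifolds.

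The main obstacle is precisely this limiting step, because the bases $L'$ and $L$ have different dimensions, so one cannot literally take limits of foliations on a fixed ambient bundle. I would attempt to resolve this by working infinitesimally at a point $p\in L$: the infinitesimal foliation $\mathfrak F_p$ on $T_pM$ is invariant under the homotheties $h_\lambda$, and tautness of regular leaves of $\f$ in a distinguished tube around $p$ is equivalent, via $h_\lambda$-invariance, to tautness of regular leaves of $\mathfrak F_p$. In this linear setting the singular leaf through $0$ is a linear subspace, and integrability of the $\exp^{\perp}$-fibers over the regular leaves of $\mathfrak F_p$ forces integrability of the $\exp^{\perp}$-fibers over the singular leaf by a direct cone-reduction, which, pulled back to $L$, yields the desired integrability over $\nu(L)$. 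A complementary route, should the infinitesimal reduction not suffice, is to construct explicit linking cycles for the critical points of $E_q|_{\mathcal P(M,L\times q)}$ as limits of the linking cycles $\Delta_v$ from the proof of Theorem \ref{thm:A} associated to regular leaves $L'$ collapsing onto $L$, with Lemma \ref{lem:tool} ensuring that the cohomological non-triviality survives the passage to the limit.
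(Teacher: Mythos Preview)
Your treatment of the first assertion is fine and matches the paper: density of regular non-focal points together with the continuity argument of \cite[Prop.~2.7]{TT} suffices.

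For the second assertion, however, your approach has a genuine gap and diverges sharply from the paper. First, invoking Theorem~\ref{thm:A} can only yield $\Z_2$-tautness, since that theorem characterizes tautness via integrable fibers and its proof produces $\Z_2$-linking cycles; the lemma as stated is for an arbitrary field $\F$, so the reduction via Theorem~\ref{thm:A} loses exactly what you need to keep. Second, the limiting step you flag as the ``main obstacle'' is not resolved: the equivalence you assert between tautness of regular leaves of $\f$ near $p$ and tautness of regular leaves of $\mf F_p$ is nontrivial (the metrics differ; compare the careful argument in Lemma~\ref{lem:inftaut}), and the ``cone-reduction'' forcing integrability over the singular leaf is not made precise.

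The paper avoids all of this by a direct and elementary construction that works for any $\F$ and involves no limits. Given a singular leaf $N$, a regular non-focal point $q$, and a critical geodesic $\gamma$ of $E_q^N$, observe that $\dot\gamma(0)$ is a regular vector of $\mf F_{\gamma(0)}$, so for small $\varepsilon>0$ the leaf $L=L_{\gamma(\varepsilon)}$ is regular and lies in a tube around $N$. The reparametrized restriction $\bar\gamma=\gamma|_{[\varepsilon,1]}$ is a critical point of $E_q^L$ with the same index as $\gamma$. Since $L$ is $\F$-taut, $\bar\gamma$ has a linking cycle $\sigma$ in the $L$-path space. Now push $\sigma$ forward to the $N$-path space by the concatenation map $\bar R$ that prepends to each path $c$ the unique short horizontal geodesic from $N$ to $c(0)\in L$. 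This $\bar R$ sends energy sublevels to energy sublevels and is an immersion, so $\bar R_*(\sigma)$ can be deformed inside a Morse chart around $\gamma$ onto the local unstable manifold, furnishing an $\F$-linking cycle for $\gamma$. No passage to limits, no appeal to Theorem~\ref{thm:A}, and the field $\F$ is preserved throughout.
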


\begin{proof}
Since the set of non-focal points of $L$ as well as the set of regular points is open and dense in $M$, every neighborhood of a given point $q$ contains a regular point that is not a focal point. Therefore, the same argument as in the proof of Proposition $2.7$ in \cite{TT} yields the first claim.

For the second claim, assume that all regular leaves are $\mathbb F$-taut. Let $N$ be a singular leaf and let $q \in M$ be not a focal point of $N$. By our above observations, we can assume that the point $q$ is regular. Let $\gamma$ be a critical point of $E_q^{N}$. Then $\dot{\gamma}(0)$ is a regular vector of $\mf F_{\gamma(0)}$, hence there exists an $\varepsilon > 0$ such that $L = L_{\gamma(\varepsilon)}$ is a regular leaf contained in a global tube of $N$ and the point $q$ is not a focal point of $L$. Denote by $\bar{\gamma}$ the restriction $\gamma |_{[\varepsilon,1]}$ after linear reparameterization on $[0,1]$. Then the horizontal geodesic $\bar{\gamma}$ is a critical point of the perfect Morse function $E_q^{L} : \mathcal{P}(M, L \times q) \To \R$. Now let $i = \mr{ind}(\bar{\gamma})$ be the index of the geodesic $\bar{\gamma}$ and denote by $\kappa = E_q^{L}(\bar{\gamma})$ its energy. By notational reasons let us set $\mathcal{L}^c = (E_q^L)^{-1}([0,c])$, resp. $\mathcal{N}^c = (E_q^N)^{-1}([0,c])$. Denote by $\sigma \in H_i(\mathcal{L}^{\kappa})$ the completion of the local unstable manifold representing a nontrivial cycle in $H_i(\mathcal{L}^{\kappa},\mathcal{L}^{\kappa}\setminus \left\{\bar{\gamma}\right\})$ associated to $\bar{\gamma}$. We then have $\mr{ind}(\gamma) = \mr{ind}(\bar{\gamma})$ for small numbers $\varepsilon$, by continuity reasons. 

The restriction of the foot point projection $R : L \To N, (p,v) \MTo p$, induces a map  
 \[
 \begin{array}{cccl}
    \bar{R} : 
    & \mathcal{L}^{\kappa} 
    & \To
    & \mathcal{N}^{E_q^N(\gamma)}
    \\
    & c 
    & \MTo
    & \tilde{c}
 \end{array}
 \]
 where $\tilde{c}$ is the curve that one gets by concatenation of the unique horizontal geodesic from $R(c(0))$ to $c(0)$ with $c$ followed by reparameterization between $0$ and $1$ and the map $\bar{R}$ maps level sets to level sets. Moreover, $\bar{R}$ is clearly an immersion. Therefore, $\bar{R}_*(\sigma)$ is a cycle in $\mathcal{N}^{E_q^N(\gamma)}$ that can be deformed within a morse chart around $\gamma$ into a cycle $z$ that agrees with the unstable manifold at $\gamma$ above the $E_q^N$-level $E_q^N(\gamma) - \delta$ for small $\delta$. It follows that the homology class of $z$ and thus the homology class $\bar{R}_*(\sigma)$ is mapped onto a generator of $H_i(\mathcal{N}^{E_q^N(\gamma)},\mathcal{N}^{E_q^N(\gamma)}\setminus \left\{\gamma\right\})$. Since the critical point $\gamma$ was chosen arbitrary, every local unstable manifold can be completed to a cycle in $H_{i(\gamma)}(\mathcal{N}^{E_q^N(\gamma)+ \delta})$, i.e. the map $H_n(\mathcal{N}^{\lambda+ \delta}) \To H_n(\mathcal{N}^{\lambda + \delta},\mathcal{N}^{\lambda- \delta})$ is surjective for all $n$ and regular values $\lambda \pm \delta$. Hence $N$ is $\mathbb F$-taut.   
\end{proof}

Our last observation in this section is that a foliation $\f$ is $\F$-taut if and only if a dense family of regular leaves is $\F$-taut, where we call a family of leaves \emph{dense} if their union is a dense set. The next lemma shows that tautness is a closed property relative to non-collapsing convergence. It is then straight forward to see that tautness of a dense family of regular leaves forces a foliation to be taut. 

\begin{lem}
\label{tlem:5}
Let $\f$ be a closed singular Riemannian foliation on a complete manifold $M$ and let $\left\{ L_n \right\}$ be a sequence of $\F$-taut regular leaves converging to a regular leaf $L$ without holonomy, i.e. for every tubular neighborhood $T$ of $L$ there is a number $n_0 \in \N$ such that $L_n \subset T$ and the canonical projection $\pi : T \To L$ restricted to $L_n$ is a diffeomorphism for every $n \geq n_0$. Then $L$ is $\F$-taut.
\end{lem}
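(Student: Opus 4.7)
The plan is to fix a non-focal regular point $q$ of $L$ and, by Lemma~\ref{tlem:3}, reduce the problem to showing that $E_q^L$ is $\mathbb F$-perfect. By continuity of the focal structure and of the regular stratum within the tubular neighbourhood $T$, for $n$ sufficiently large $q$ is also a non-focal regular point of $L_n$, so that $E_q^{L_n}$ is $\mathbb F$-perfect by hypothesis.

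The strategy is then to transfer perfectness along the convergence $L_n\to L$ via two parallel comparisons. Because $L_n\subset T$ and $\pi|_{L_n}:L_n\to L$ is a diffeomorphism, set $\phi_n=(\pi|_{L_n})^{-1}:L\to L_n$. Since $\mathcal F$ is singular Riemannian, the short geodesic $\sigma_n(p)$ from $p\in L$ to $\phi_n(p)\in L_n$ inside the fibre $\pi^{-1}(p)$ is perpendicular to both leaves. Prepending $\sigma_n(c(0))$ to any $c\in\mathcal P(M,L\times q)$ and reparameterizing on $[0,1]$ yields a map $\Phi_n:\mathcal P(M,L\times q)\to\mathcal P(M,L_n\times q)$ that is a homotopy equivalence (its homotopy inverse removes the analogous initial segment), so both path spaces have the same $\mathbb F$-Betti numbers. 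For critical points, each $L$-geodesic to $q$ corresponds to a vector $v\in(\exp^\perp_L)^{-1}(q)$, and since $q$ is non-focal, $\exp^\perp_L$ is a local diffeomorphism near every such $v$. The natural identification of $\nu(L)$ with $\nu(L_n)$ along the tubes together with the continuity of the exponential map in the underlying submanifold then produces, for $n$ large and via the implicit function theorem, a unique $v_n\in(\exp^\perp_{L_n})^{-1}(q)$ close to each $v$, with every element of $(\exp^\perp_{L_n})^{-1}(q)$ within a fixed energy range arising this way (finiteness of both sides comes from Condition~(C)). By Morse's index formula, the index of each critical point is the sum of focal multiplicities in the open interval $(0,1)$; these multiplicities depend continuously on the leaf and since $q$ stays non-focal no focal time can escape $(0,1)$ in the limit, so paired critical points have equal indices for $n$ large.

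Combining the two comparisons, for every $k$ and every $n$ sufficiently large the number of critical points of $E_q^L$ of index $k$ equals that of $E_q^{L_n}$, which equals $b_k(\mathcal P(M,L_n\times q);\mathbb F)$ by perfectness of the latter, which in turn equals $b_k(\mathcal P(M,L\times q);\mathbb F)$ via $\Phi_n$. Hence $E_q^L$ is $\mathbb F$-perfect, and since $q$ was arbitrary, $L$ is $\mathbb F$-taut. The main technical obstacle is to make rigorous the matching of focal multiplicities along the paired geodesics $\gamma_v$ and $\gamma_{v_n}$: this amounts to comparing $L$-Jacobi fields with $L_n$-Jacobi fields along converging horizontal geodesics, which requires controlling the second fundamental forms of $L_n$ uniformly in $n$; the uniform control follows from the smoothness of the tubular bundle structure on $T$ together with the fact that the $L_n$ are sections of $\pi|_T$ converging smoothly to the zero section.
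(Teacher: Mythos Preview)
Your approach differs from the paper's: you attempt a global counting argument, equating the total number $\nu_k$ of index-$k$ critical points with the total Betti number $b_k$ of the path space, whereas the paper argues directly that each connecting homomorphism $\partial_*$ vanishes. The homotopy equivalence $\Phi_n$ matches the paper's map $f_n$, and your critical-point bijection with index matching is plausible (and not the real difficulty, despite your final paragraph), but the concluding step has a genuine gap: in the path-space setting both $\nu_k$ and $b_k$ can be infinite. For instance, take $\mathcal F$ to be the foliation by points on a flat torus; every geodesic has index $0$, so $\nu_0 = b_0 = \infty$ for every leaf, and the chain $\nu_k^L = \nu_k^{L_n} = b_k(\mathcal P(M,L_n\times q)) = b_k(\mathcal P(M,L\times q))$ is vacuous. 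Perfectness is the \emph{relative} statement $\nu_k(a,b) = b_k(P^b,P^a)$ for all regular $a<b$, and this cannot be recovered from an infinite global count.

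To repair the argument you must work with sublevel sets, but then $\Phi_n$ no longer gives what you need: it shifts energy by some $\delta_n>0$, so it maps $(P_L)^r$ only into $(P_{L_n})^{r+\delta_n}$, and you do not directly obtain $b_k((P_L)^r) = b_k((P_{L_n})^r)$. The paper handles precisely this point: for a critical value $\kappa$ it observes that $g_n\circ f_n$ restricted to $(P_L)^{\kappa-2\varepsilon}$ can be deformed, staying below level $\kappa$, into the inclusion $(P_L)^{\kappa-2\varepsilon}\hookrightarrow(P_L)^{\kappa-\varepsilon}$; hence $(f_n)_*$ is \emph{injective} on the relevant sublevel homology. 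Naturality of the connecting homomorphism together with $\tilde\partial_* = 0$ (from tautness of $L_n$) then forces $\partial_* = 0$ at each critical level of $E_q^L$. No global bookkeeping and no bijection between critical sets is required.
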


\begin{proof}
Let $T$ be a tubular neighborhood of $L$ and let $\pi : T \To L$ be the canonical projection. Choose a number $n_0 \in \N$ so large that $L_n \subset T$ for all $n \geq n_0$. Now let $q \in M$ be not a focal point of $L$. Then for large $n$, the point $q$ is not a focal point of $L_n$ as well. By $f_n : \mathcal{P}(M,L \times q) \To \mathcal{P}(M,L_n \times q)$, $g_n : \mathcal{P}(M,L_n \times q) \To \mathcal{P}(M,L \times q)$ respectively, we denote the induced maps between the path spaces which one gets by assigning to a curve $c$ the curve $\gamma_{c(0)}\cdot c$ and then reparameterizing it between $0$ and $1$, where $\gamma_{c(0)}$ is the unique shortest geodesic between $L_n$ and $L$ that intersects $L$ in $c(0)$, resp. $\gamma_{c(0)}$ is the unique shortest geodesic between $L$ and $L_n$ that intersects $L_n$ in $c(0)$. Then $f_n$ is in an obvious way a homotopy equivalence with homotopy inverse $g_n$.

Let $\gamma$ be a critical point of $E_q^L$ with $\kappa = E_q^L(\gamma)$. We can choose $n$ so large, i.e. a tube $T$ so small, that there is an $\varepsilon >0$ such that $(\kappa-3\varepsilon,\kappa+3\varepsilon)\setminus \{\kappa\}$ contains only regular values and 
$g \circ f(P^{\kappa-2\varepsilon}) \subset P^{\kappa-\varepsilon}$ with $f = f_n, g = g_n$ and $P^r = \mathcal{P}(M,L \times q)^r$ and $P_n^r$ defined analogous. Moreover, we can deform $g \circ f : P^{\kappa - 2\varepsilon} \To P^{\kappa -\varepsilon}$ into the inclusion $j: P^{\kappa-2\varepsilon} \hookrightarrow P^{\kappa -\varepsilon}$ below the $\kappa$-level of $E_q^L$, i.e. 
\[
  \begin{array}{cccl}
    (g \circ f)_{\ast} = j_{\ast} :
    & H_{\ast}(P^{\kappa - 2\varepsilon})
    & \To 
    & H_{\ast}(P^{\kappa - \varepsilon}).
  \end{array}
\]
Since $P^{\kappa - 2\varepsilon}$ is a strong deformation retract of $P^{\kappa - \delta}$ for all $2\varepsilon > \delta > 0$, the above map is an isomorphism. In particular, the induced map in homology 
\begin{equation*}
f_{\ast} :H_{\ast}(P^{\kappa - 2\varepsilon}) \To H_{\ast}(P_n^{\alpha})
\end{equation*}
\noindent with $\alpha = \max \{E_q^{L_n}(f(c)) : c \in (E_q^L)^{-1}(\kappa - 2\varepsilon) \}$ is injective.
   
Denote by $i = \mr{ind}(\gamma)$ the index and set $\tilde{\alpha} = \max \{E_q^{L_n}(f(c)) : c \in (E_q^L)^{-1}(\kappa) \}$.  Now consider the commutative diagramm which comes from the long exact sequence for pairs of spaces together with the natural behavior of the connecting homomorphism
 \[
 \begin{xy}
   \xymatrix
   { 
   H_i(P^{\kappa}) \ar[r]^{f_{\ast}} \ar[d]
   & H_i(P_n^{\tilde{\alpha}}) \ar[d]
   \\
   H_i(P^{\kappa},P^{\kappa- 2\varepsilon}) \ar[r]^{f_{\ast}} \ar[d]_{\partial_{\ast}} 
   & H_i(P_n^{\tilde{\alpha}},P_n^{\alpha}) \ar[d]^{\tilde{\partial}_{\ast}}
   \\
   H_{i-1}(P^{\kappa -2\varepsilon}) \ar[r]^{f_{\ast}}
   & H_{i-1}(P_n^{\alpha})
   \\
   }
 \end{xy}
\]
Since $L_n$ is taut, we have $\tilde{\partial}_{\ast} = 0$. So 
\begin{equation*}
f_{\ast} \circ \partial_{\ast} = \tilde{\partial}_{\ast} \circ f_{\ast} = 0.
\end{equation*}
As we have seen, the map $f_{\ast} : H_{\ast}(P^{\kappa - 2\varepsilon}) \To H_{\ast}(P_n^{\alpha})$ is injective. But this means $\partial_{\ast} = 0$, i.e. $L$ is taut.            
\end{proof}

Now assume under the assumptions of Lemma \ref{tlem:5} that all $L_n$ are regular leaves without holonomy and that $L$ is an exceptional leaf, i.e. has nontrivial holonomy. Due to Lemma \ref{tlem:2}, we can assume that $M$ is simply connected. Then for large $n$, the leaf $L$ would provide at least two local minima for $L_n$. Again, by tautness of $L_n$, the path space corresponding to $L_n$ would be disconnected. But this is clearly a contradiction, since $M$ is simply connected. So, $L$ necessarily has trivial holonomy. Thus combining Lemma \ref{tlem:2}, Lemma \ref{tlem:3}, and Lemma \ref{tlem:5} together with the fact that the set of regular leaves without holonomy is open and dense in $M/\f$, we have

\begin{cor}
\label{tlem:7}
The closed singular Riemannian foliation $\f$ is $\F$-taut if and only if a dense family of leaves is $\F$-taut.
\end{cor}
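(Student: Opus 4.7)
The forward direction is immediate from the definition: if $\f$ is $\F$-taut then every leaf, hence every subfamily of leaves, is taut. The content lies in the converse, and my plan is to chain together the preceding lemmas in order to reduce it to Lemma~\ref{tlem:5}.

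Assume that a dense family $\mathcal{L}$ of leaves of $\f$ is $\F$-taut. First I would pass to the universal covering: by Lemma~\ref{tlem:2}, $\f$ is taut if and only if its lift $\widetilde{\f}$ is taut, and Lemma~\ref{tlem:1b} applied to the Riemannian covering $\pi\colon \widetilde{M} \to M$ shows that the preimages of the taut leaves of $\mathcal{L}$ decompose into connected components that are themselves taut. Because $\pi$ is surjective, these preimages form a dense family in $\widetilde{M}$, so we may assume that $M$ is simply connected. Next, by Lemma~\ref{tlem:3} it suffices to show that every regular leaf is taut. Since the singular strata have codimension at least two and the set of regular leaves without holonomy is open and dense in $M$, the subfamily $\mathcal{L}^* \subset \mathcal{L}$ of leaves that are regular and have trivial holonomy still has dense union.

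Now let $L$ be an arbitrary regular leaf. Using the dense union of $\mathcal{L}^*$ I can pick leaves of $\mathcal{L}^*$ in arbitrarily thin tubular neighborhoods of $L$, producing a sequence $L_n \in \mathcal{L}^*$ converging to $L$ in exactly the sense required by Lemma~\ref{tlem:5}. To invoke that lemma I must establish that $L$ itself has trivial holonomy, which is where the simply connected reduction is used, exactly as in the paragraph just preceding the corollary. Were $L$ to have nontrivial holonomy of order $k \geq 2$, then for large $n$ the foot-point projection $L_n \to L$ in a tube around $L$ would be a nontrivial finite cover, so for a point $q$ close to $L$ and not focal for $L_n$ the several sheets would produce at least two distinct shortest $L_n$-geodesics ending at $q$, i.e.\ at least two local minima of the energy $E_q^{L_n}$. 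Tautness of $L_n$ would then force $b_0(\mathcal{P}(M, L_n \times q);\F) \geq 2$, contradicting the connectedness of $\mathcal{P}(M, L_n \times q)$ coming from the fibration sequence $\pi_0(\Omega_{q,q}(M)) \to \pi_0(\mathcal{P}(M, L_n \times q)) \to \pi_0(L_n) \to 1$ together with $\pi_1(M) = 1$. Hence $L$ has trivial holonomy, Lemma~\ref{tlem:5} gives that $L$ is taut, and Lemma~\ref{tlem:3} then yields tautness of $\f$.

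The main obstacle I anticipate is the bookkeeping in the first reduction: verifying that density and $\F$-tautness are simultaneously preserved when passing to $\widetilde{M}$ and then restricting to regular leaves without holonomy. Once this is in place, the limiting argument is Lemma~\ref{tlem:5}, and the contradiction ruling out holonomy on $L$ is a direct repetition of the connectedness argument already recorded before the statement of the corollary.
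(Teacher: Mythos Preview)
Your proposal is correct and follows essentially the same route as the paper: reduce to the simply connected cover via Lemma~\ref{tlem:2}, restrict attention to regular leaves via Lemma~\ref{tlem:3}, rule out holonomy of the limit leaf by the path-space connectedness argument recorded just before the corollary, and then apply Lemma~\ref{tlem:5}. Your write-up is in fact more explicit than the paper's one-line summary, and the bookkeeping you flag (density surviving the lift and the restriction to $\mathcal{L}^*$) goes through for the reasons you indicate.
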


\subsection{Index Splitting for Horizontal Geodesics}
\label{sec:HGI}

In this section we summarize some observations on the focal indices of horizontal geodesics in a singular Riemannian foliation $(M,\f)$. We will see that the focal data of the space of normal $L_{\gamma(a)}$-Jacobi fields along a regular horizontal geodesic $\gamma: [a,b] \To M$ are of two types. Namely, for $t \in (a,b)$, there is a vertical multiplicity $\mr{dim}(\f) - \mr{dim}(L_{\gamma(t)})$ that counts the intersections with the singular stratum, and a horizontal multiplicity that is, roughly speaking, the multiplicity of $\gamma(t)$ as a conjugate point of $\gamma(a)$ in the quotient $M/\f$ along the projection of $\gamma$. We refer to \cite{L07}, \cite{LT} and \cite{Wi} for the proofs and a more detailed discussion of the following facts. Our summary is referred to \cite{LT}.\\   

If $\f$ is a singular Riemannian foliation on a Riemannian manifold $(M,g)$ we call a geodesic $\gamma$ \emph{horizontal} if it meets one leaf  of $\f$, and hence all the leaves it meets, perpendicularly. We will call such a geodesic $\gamma : [a,b] \To M$ regular, if the endpoints lie on regular leaves of $\f$.

A regular horizontal geodesic intersects the singular set $M\setminus M_0$ of $\f$ only in finitely many points $a < t_1 < \dots < t_r < b$ (see Cor.4.6 in \cite{LT}) and the number
\begin{equation*}
c(\gamma) = \sum_{i = 1}^r\mr{dim}(L_{\gamma(a)}) - \mr{dim}(L_{\gamma(t_i)})
\end{equation*}
is  called the \emph{crossing number} of $\gamma$. 

\begin{dfn}
Let $\gamma : [a,b] \To M$ be a horizontal geodesic. A Jacobi field $J$ along $\gamma$ is called an $\f$-\emph{Jacobi field} if it is the variational vector field of a variation through horizontal geodesics starting on the same leaf as $\gamma$.  It is called an $\f$-\emph{vertical Jacobi field} if it satisfies $J(t) \in T_{\gamma(t)}L_{\gamma(t)}$ for all $t$.
\end{dfn}

For a geodesic $\gamma : [a,b] \To M$ the space $\mr{Jac}$ of all normal Jacobi fields along $\gamma$ is a symplectic vector space with its canonical symplectic form 
\begin{equation*}
\omega(J_1,J_2) = \left <\nabla J_1,J_2 \right > + \left <J_1,\nabla J_2 \right >.
\end{equation*}
 Now let $(M,\f,g)$ be a singular Riemannian foliation and let $\gamma : [a,b] \To M$ be a horizontal geodesic. To arrange the dependence of $\Lambda^{L_{\gamma(a)}}$ on the starting point of the geodesic consider the isotropic space $W^{\gamma}$ consisting of all Jacobi fields along $\gamma$ with the property that these fields are variational fields through horizontal geodesics that intersect the same leaves simultaneously, i.e. of variations $\gamma_s$ with $\gamma_s(t) \in L_{\gamma(t)}$ for all $t$. It is shown in  \cite{LT}  that $W^{\gamma}(t) = \left\{J(t) \mbox{ $|$ } J \in W^{\gamma} \right\}$ coincides with $T_{\gamma(t)}L_{\gamma(t)}$, for all $t$ and by definition we have $W^{\gamma} \subset \Lambda^{L_{\gamma(a)}}$. The space $W^{\gamma}$ is exactly the space of all $\f$-vertical Jacobi fields along $\gamma$ and does not depend on the starting point. We have $\mr{dim}(W^{\gamma}) = \max_{t \in [a,b]}\{\mr{dim}(L_{\gamma(t)})\}$ and we denote this number by $d(\gamma)$. The $W^{\gamma}$-focal points along $\gamma$ are precisely the points $\gamma(t)$, where $\gamma$ crosses the singular strata, i.e. when $\mr{dim}(L_{\gamma(t)}) < d(\gamma)$, and the $W^{\gamma}$-focal index of $\gamma(t)$ 
 \begin{equation*}
 \mr{dim}(\{J \in W^{\gamma}\mbox{$|$} J(t) = 0\})
 \end{equation*}
  is equal to $d(\gamma) - \mr{dim}(L_{\gamma(t)})$. In particular, for a regular horizontal geodesic $\gamma$ its crossing number $c(\gamma)$ coincides with the index $\mr{ind}_{W^{\gamma}}(\gamma)$.

For notational convenience set $L = L_{\gamma(a)}$ and $W = W^{\gamma}$. 
It follows from the observation of Wilking in \cite{Wi}  that the extension $\widetilde W$ of $W$, given by
\begin{equation*}
\widetilde{W}(t) = W(t) \oplus \left\{ \nabla J(t) \mbox{ $|$ } J \in W, J(t) = 0 \right\},
\end{equation*}
defines a smooth subbundle of the normal bundle $N$ of $\gamma$. Of course, $\widetilde W$ coincides with $W$ along the regular part of $\gamma$. Using the identification of $N/\widetilde W$ with the orthogonal complement $H$ of $\widetilde W$ in $N$, there is a smooth map $A : \widetilde{W} \To H$, defined by $A(J(t)) = P(\nabla J(t))$ and $A(\nabla J(t)) = 0$ if $J(t) = 0$, where $P$ denotes the orthogonal projection. Let $R : N \To N$ denote the curvature endomorphism, given by $R(X) = R(X,\dot{\gamma})\dot{\gamma}$ and consider the induced family of symmetric endomorphisms $R^H : H \To H$, defined by 
 \begin{equation*}
 R^H(Y) = P(R(Y)) + 3AA^*(Y).
 \end{equation*}
Then the (more general) work of Wilking on  the transversal Jacobi equation in \cite{Wi} shows that the $R^H$-Jacobi fields, i.e. solutions of $(\nabla^H)^2 J + R^H(J) = 0$, where $\nabla^H(J) = P(\nabla J)$ denotes the induced covariant derivative on $H$, are precisely the projections of the Jacobi fields in the symplectic orthogonal complement of $W$. In particular, since the space $\Lambda^L$ is Lagrangian and contains $W$, one obtains the next lemma (cf. \cite{L07} and \cite{LT}).

\begin{lem}
\label{lem:indexsplitting}
If, for an isotropic  subspace $V \subset \mr{Jac}$, we denote by 
\begin{equation*}
\mr{ind}_V(\gamma) = \sum_{t \in [a,b]}(\mr{dim}(V) - \mr{dim}(\{J \in V \mbox{$|$} J(t) = 0\}))
\end{equation*}
 the $V$-focal index of $\gamma$, we have the equality 
 \begin{equation*}
 \mr{ind}_W(\gamma) + \mr{ind}_{\Lambda^L/W}(\gamma) = \mr{ind}_{\Lambda^L}(\gamma).
 \end{equation*}
  Further, the  index  $\mr{ind}_{\Lambda^L/W}(\gamma)$ is an intrinsic datum of the quotient $M/\f$.  
\end{lem}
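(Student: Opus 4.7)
The plan is to combine a pointwise dimension count, which uses only the symplectic form and the definitions of $W$ and $\widetilde W$, with Wilking's transversal Jacobi equation on $H$. Since $W$ is an isotropic subspace of the Lagrangian $\Lambda^L$, symplectic reduction realises $\Lambda^L/W$ as a Lagrangian in $W^{\omega}/W$, and Wilking's construction identifies this reduction with the space of solutions of $(\nabla^H)^2 J + R^H(J) = 0$ along $H$. Under this identification, the reduced field attached to $\bar J = [J] \in \Lambda^L/W$ at time $t$ is $P(J(t)) \in H_{\gamma(t)}$, where $P : N \to H$ is the orthogonal projection; in particular its vanishing is equivalent to $J(t) \in \widetilde W(t)$.

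The core of the splitting is the pointwise identity
$$\dim\{J \in \Lambda^L : J(t) = 0\} \;=\; \dim\{J \in W : J(t) = 0\} \;+\; \dim\{\bar J \in \Lambda^L/W : P(J(t)) = 0\}$$
for every $t \in [a,b]$; summing over $t$ then yields the claimed identity of indices. An elementary rank--nullity computation applied to the evaluation maps $\Lambda^L \to N_{\gamma(t)}$, $W \to N_{\gamma(t)}$ and the induced map $\Lambda^L/W \to H_{\gamma(t)}$ reduces this identity to the purely linear-algebraic equality
$$\Lambda^L(t) \cap \widetilde W(t) \;=\; W(t),$$
where $\Lambda^L(t) = \{J(t) : J \in \Lambda^L\}$. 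The inclusion $W(t) \subset \Lambda^L(t) \cap \widetilde W(t)$ is immediate from $W \subset \Lambda^L$ and $W(t) \subset \widetilde W(t)$.

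For the reverse inclusion, fix $J \in \Lambda^L$ with $J(t) \in \widetilde W(t)$. For any $J_0 \in W$ with $J_0(t) = 0$, the Lagrangian property of $\Lambda^L$ forces
$$0 \;=\; \omega(J,J_0) \;=\; \langle \nabla J(t), J_0(t)\rangle + \langle J(t), \nabla J_0(t)\rangle \;=\; \langle J(t), \nabla J_0(t)\rangle,$$
so $J(t)$ is orthogonal to the subspace $\{\nabla J_0(t) : J_0 \in W,\ J_0(t) = 0\}$. Running the same computation with $J$ replaced by an arbitrary $J_1 \in W$ and using the isotropy of $W$ shows that $W(t)$ is orthogonal to the same subspace, so the decomposition $\widetilde W(t) = W(t) \oplus \{\nabla J_0(t) : J_0 \in W,\ J_0(t) = 0\}$ is actually orthogonal. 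Combining these orthogonality statements with $J(t) \in \widetilde W(t)$ forces $J(t) \in W(t)$, completing the pointwise identity.

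For the intrinsic statement, along the regular part of $\gamma$ the bundle $H$ is canonically identified with the pullback of $T(M_0/\f)$ along the projected geodesic $\bar\gamma$; under this identification $R^H$ and $\nabla^H$ agree with the Riemann curvature and Levi-Civita connection of $M_0/\f$, so $R^H$-Jacobi fields become honest Jacobi fields on the quotient, and $\Lambda^L/W$ corresponds to the variations of $\bar\gamma$ through geodesics meeting the projection of $L$ orthogonally. Consequently $\mr{ind}_{\Lambda^L/W}(\gamma)$ equals the focal index of $\bar\gamma$ with respect to the projected leaf in $M/\f$, a datum intrinsic to the quotient. The main obstacle is the extension of these identifications across singular times of $\gamma$, where $M/\f$ is not smooth and $\widetilde W$ strictly enlarges $W$; this is handled by Wilking's smooth extension of $\widetilde W$, $\nabla^H$ and $R^H$ across the singular strata, which guarantees that the reduced data, and hence the intrinsic interpretation, remain valid at every point of $\gamma$.
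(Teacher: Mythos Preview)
Your argument is correct and follows the same route the paper outlines: both rely on Wilking's transversal Jacobi equation and the symplectic reduction of $\Lambda^L$ by $W$, with the paper simply citing \cite{L07} and \cite{LT} for the details that you spell out explicitly (the pointwise identity $\Lambda^L(t)\cap\widetilde W(t)=W(t)$ via the Lagrangian property, and the identification of the reduced equation with the quotient geometry on the regular part). Your explicit rank--nullity reduction and orthogonality computation are exactly the kind of detail the paper defers to the references.
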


By the latter statement we mean the following. Assume that two singular Riemannian foliations $(M_i,\f_i), i=1,2$, have isometric  quotients and identify $M_1/\f_1 = M_2/\f_2$ via an isometry. Then for any two horizontal geodesics which start in the respective regular part and have the same projection to the quotient, their horizontal indices coincide. This follows from the fact that their transversal Jacobi equations coincide outside a set of isolated points. Of course, this observation is valid if one only requires neighborhoods of the projections in the respective quotients to be isometric.

\begin{dfn}
For a horizontal geodesic $\gamma$, we call $\mr{ind}_{\Lambda^L/W}(\gamma)$ the \emph{horizontal index of  $\gamma$} and $\mr{ind}_W(\gamma) $ the \emph{vertical index of $\gamma$}.
\end{dfn}

\begin{exl}
In the case of a Riemannian submersion $f: M \to M/\f$, the horizontal geodesics are the horizontal lifts of geodesics in $M/\f$ and the space $W^{\gamma}$ of such a horizontal geodesic is exactly the space of variational vector fields corresponding to variations through horizontal lifts of $f \circ \gamma$. In this case, the vertical index is zero, as it counts the intersections with singular leaves, and the term $AA^*$ in the above equation is just the  O'Neill tensor, so that the indices of $\gamma$ and $f \circ \gamma$ coincide.
\end{exl}

\subsection{A Property of the Quotient}
\label{sec:PQ}

Dealing with singular Riemannian foliations one focuses mainly on the horizontal geometry of the foliation, that is to say the geometry of the quotient. For this reason, one is often interested in geometric properties of the foliation that can be read off the quotient and to consider \emph{equivalence classes} of foliations by means of isometric quotients. An example of such a quotient property is infinitesimal polarity (cf. \cite{LT}) what is equivalent to the property that the quotients are Riemannian orbifolds. Our second main result now states that tautness of a foliation is actually also a property of the quotient, so that one can speak about \emph{equivalence classes} of taut foliations by means of their leaf spaces.

\begin{thm}
\label{thm:B}
Let $\f$ and $\f'$ be closed singular Riemannian foliations on complete Riemannian manifolds $M$ and $M'$ with isometric quotients. Then $\f$ is taut if and only $\f'$ is taut. In particular, if one of them is $\mathbb F$-taut, then both are $\mathbb Z_2$-taut.
\end{thm}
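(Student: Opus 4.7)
My strategy will be to use Theorem~\ref{thm:A} to rephrase tautness as the integrability of the fibers of the normal exponential map, and then to show that this property is intrinsic to the quotient $M/\f$. The final ``In particular'' clause is then immediate from Theorem~\ref{cor:MB2}: if $\f$ is $\F$-taut, each of its leaves is $\F$-taut, hence $\Z_2$-taut, and the main equivalence transfers this conclusion to $\f'$.

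First I will reduce. By Lemma~\ref{tlem:2} I may replace $M$ and $M'$ by their universal Riemannian coverings, so that both are simply connected. By Corollary~\ref{tlem:7} combined with Lemma~\ref{tlem:3} it is enough to show that a regular leaf $L \in \f$ without holonomy is taut if and only if the corresponding regular leaf $L' \in \f'$, singled out by the isometry $\phi: M/\f \to M'/\f'$, is taut. By Theorem~\ref{thm:A} this amounts to proving that the fibers of $\exp^\perp_L : \nu(L) \to M$ are unions of submanifolds if and only if the same holds for $\exp^\perp_{L'}$.

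The core argument exploits the index splitting of Lemma~\ref{lem:indexsplitting}. Fix $v \in \nu(L)$ with $q = \exp^\perp_L(v)$ and let $\gamma_v$ denote the resulting horizontal $L$-geodesic. The tangent space to the fiber through $v$ is $\ker(d\exp^\perp_L)_v$, canonically identified with the space of $L$-Jacobi fields along $\gamma_v$ vanishing at $t=1$. This kernel decomposes, modulo $W^{\gamma_v}$, into a vertical part and a horizontal part. The vertical part has dimension $\dim L - \dim L_q$ and always integrates within $(\exp^\perp_L)^{-1}(q)$ to a submanifold by the foliation structure alone, namely the set of $w \in \nu(L)$ for which $\gamma_w$ ends at $q$ and projects onto the same curve as $\gamma_v$ in $M/\f$. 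By the reduction of the transversal Jacobi equation used in Lemma~\ref{lem:indexsplitting}, the horizontal part is governed by data intrinsic to $M/\f$; under $\phi$ it matches the analogous horizontal part for the corresponding geodesic $\gamma'_{v'}$ in $M'$. Hence whether the full kernel integrates to a submanifold of $\nu(L)$ through $v$ becomes a statement on $M/\f \cong M'/\f'$, which is answered identically on both sides.

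The main obstacle will be making rigorous the final sentence of the previous paragraph: one must turn the infinitesimal quotient-level identification of the horizontal Jacobi data into an actual integrability statement in $\nu(L)$ and in $\nu(L')$. Since $M/\f$ need not be a manifold, this is to be phrased via the canonical Riemannian submersion given by the quotient projection on the regular stratum without holonomy, under which horizontal lifts of geodesic variations in the quotient correspond precisely to horizontal variations of $\gamma_v$ modulo $W^{\gamma_v}$. Once both integrability claims are made equivalent to the same condition on variations in $M/\f \cong M'/\f'$, they agree; combining this with the automatic integrability of the vertical part and applying Theorem~\ref{thm:A} yields $L$ taut iff $L'$ taut, and therefore $\f$ taut iff $\f'$ taut.
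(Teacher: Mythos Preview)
Your overall strategy coincides with the paper's: reduce via Theorem~\ref{thm:A} to integrability of the fibers of $\exp^\perp$, restrict via the genericity results to a regular leaf $L$ without holonomy and its counterpart $L'$, and exploit the horizontal/vertical splitting of Lemma~\ref{lem:indexsplitting} to isolate the quotient-intrinsic part of the kernel. The ``In particular'' clause is handled correctly.

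Two points, however, need attention. First, the passage to universal covers via Lemma~\ref{tlem:2} is unnecessary and not clearly valid here: while tautness of $\f$ is equivalent to tautness of $\widetilde\f$, nothing guarantees that $\widetilde M/\widetilde\f$ and $\widetilde{M'}/\widetilde{\f'}$ remain isometric, so the hypothesis of the theorem may be lost after lifting. The paper makes no such reduction, and you do not use simple connectedness afterwards anyway, so simply drop this step.

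Second, and this is the real gap, you yourself identify the missing piece: the sentence ``Once both integrability claims are made equivalent to the same condition on variations in $M/\f$'' is precisely what must be proved, and your outline neither names that condition nor shows the equivalence. Phrasing things via the Riemannian submersion on the regular stratum is not enough, since $\gamma_v$ typically crosses singular leaves and the endpoint $q=\exp^\perp_L(v)$ may itself be singular. The paper does \emph{not} proceed symmetrically by extracting an abstract quotient condition; instead it argues asymmetrically. Assuming $\f$ taut, the integral manifold $\Delta_v\subset(\exp^\perp_M)^{-1}(q)$ exists. One maps it into $\nu_q(L_q)$ via $w\mapsto d(\exp^\perp_M)_w(w)$, observes that its image is a union of regular leaves of the infinitesimal foliation $\mf F_q^1$, and projects to the tangent cone $T_{Q(q)}B$ of the common quotient via $dQ_q$. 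Pulling this set back through $dQ'_{q'}$ into $\nu_{q'}(L'_{q'})$ yields a compact submanifold $K_{q'}$ saturated by regular leaves of $\mf F^1_{q'}$, of dimension exactly $\dim\ker(d\exp^\perp_{M'})_{v'}$ by the index-splitting comparison. Finally, using homothety-invariance of the infinitesimal foliations, one transports $K_{q'}$ back along the geodesics to obtain $\Delta'_{v'}\subset\nu(L')$ and checks that $T_{w'}\Delta'_{v'}=\ker(d\exp^\perp_{M'})_{w'}$ for every $w'\in\Delta'_{v'}$. This concrete transfer through the infinitesimal foliations at the endpoint is the heart of the proof and is absent from your outline.
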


Before we begin with the proof of the theorem let us discuss and apply this result in the context of the known examples.\\

If $M = S^k$ is the round sphere and $\f$ is the trivial foliation by points, there is a well known cycle construction for critical points of the energy functional (cf. p.95-96 of \cite{M}) which shows that $S^k$ is pointwise taut. Terng and Thorbergsson proved in \cite{TT} that the standard metric on the sphere is the only one with respect to which the sphere is pointwise taut. 

Now consider the more general case $M/\f = N/\Gamma$, where $N$ is a symmetric space.
In their study of Morse theory of symmetric spaces, Bott and Samelson came up with concrete cycles which represent a basis in $\Z_2$-homology of generic path spaces $\mathcal P(N,p \times q)$ and which are in fact compact connected manifolds (see \cite{BS}) and coincide with those cycles we constructed in Theorem \ref{thm:A}. In particular, symmetric spaces are pointwise taut . Therefore, the foliation $\f$ on $M$ has to be taut by Theorem \ref{thm:B}. Of course, one could allow an additional constant direction of non-positive curvature in the quotient, because under this assumption there are no focal points in this direction.

We want to emphasize that the following corollary covers all known examples.

\begin{cor}
If $\f$ is a closed singular Riemannian foliation on a complete Riemannian manifold $M$ and $M/\f = (N \times P)/\Gamma$ is a good Riemannian orbifold, where $N$ is a symmetric space and $P$ is a non-positively curved manifold, then $\f$ is taut.
\end{cor}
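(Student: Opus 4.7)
The plan is to produce a second closed singular Riemannian foliation $\f'$ on a complete Riemannian manifold $M'$ whose quotient $M'/\f'$ is isometric to $M/\f$ and which is manifestly taut; Theorem \ref{thm:B} then finishes the proof. The natural choice is $M' = N \times P$ with $\f'$ the orbit foliation of $\Gamma \subset \mr{Iso}(N \times P)$. Since $(N \times P)/\Gamma$ is a good Riemannian orbifold, $\Gamma$ acts properly with closed orbits, so $\f'$ is indeed a closed singular Riemannian foliation whose quotient coincides with $(N \times P)/\Gamma = M/\f$. Each leaf is a discrete $\Gamma$-orbit, and since a closed submanifold is taut if and only if each of its connected components is taut, the tautness of $\f'$ reduces to the pointwise tautness of $N \times P$.

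The heart of the argument is therefore to show that $N \times P$ is pointwise taut. For the symmetric factor this is the classical theorem of Bott and Samelson \cite{BS}. For the non-positively curved factor $P$, the observation already recalled in Section \ref{sec:Intro} applies: $P$ has no conjugate points, hence every critical point of every energy functional $E_q : \mathcal P(P,p \times q) \To \R$ has index zero, and a unique such minimum is produced in each path component by the Cartan--Hadamard picture on the universal cover; counting critical points against $b_0$ then gives perfectness in every degree. For the product, the splitting of the Riemannian metric induces a canonical isomorphism
\begin{equation*}
\mathcal P(N \times P, (n_0,p_0) \times (n_1,p_1)) \cong \mathcal P(N, n_0 \times n_1) \times \mathcal P(P, p_0 \times p_1),
\end{equation*}
under which the energy becomes the sum $E^N + E^P$; critical points are pairs of critical points with additive indices, and a K\"unneth-type count from the perfectness of the factors yields perfectness on the product. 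Hence $N \times P$ is pointwise taut, and the corollary follows.

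I do not expect a substantive obstacle: the deep input is already packed into Theorem \ref{thm:B} and the Bott--Samelson construction. The only step that warrants a line of care is the product statement, which is essentially a K\"unneth argument once one observes that the energy splits additively across the factors of a Riemannian product, together with the remark (cf. Lemma \ref{tlem:3}) that one may restrict attention to basepoints chosen generically in each factor so as to avoid focal points.
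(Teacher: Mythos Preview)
Your proposal is correct and follows essentially the same route as the paper: invoke Theorem \ref{thm:B} by comparing $\f$ with the $\Gamma$-orbit decomposition of $N\times P$, and reduce to pointwise tautness of the product via Bott--Samelson on $N$ and the absence of conjugate points on $P$. The paper's argument is exactly this (stated in the paragraph preceding the corollary), only more tersely; your added care about the product splitting of the energy and the reduction from discrete orbits to points is implicit there.
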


Another application of Theorem \ref{thm:B} are foliations admitting generalized sections.

\begin{exl}
Let $M$ be a complete Riemannian manifold with an isometric action of a compact Lie group $G$. In \cite{GOT} the authors developed the concept of a generalized section for such an action. They call a connected, complete submanifold $\Sigma$ of $M$ a $k$-\emph{section} if the following hold:
\begin{itemize}
\item $\Sigma$ is totally geodesic;
\item $\Sigma$ intersects all orbits;
\item for every $G$-regular point $p \in \Sigma$ the tangent space $T_p\Sigma$ contains the normal space $\nu_p(G(p))$ as a subspace of codimension $k$;
\item if $p \in \Sigma$ is a $G$-regular point with $g(p) \in \Sigma$ for some $g \in G$ then $g(\Sigma) = \Sigma$.
\end{itemize}
Generalized sections are also called \emph{fat sections} and the copolarity of $(G,M)$ is defined by $\mr{copol}(G,M)=\min \left\{k \in \N \mbox{$|$} \text{ there is a $k$-section } \Sigma \subset M\right\}$ and measures, roughly speaking, how far the action is from being polar, i.e. admitting a $0$-section. If $\Sigma$ is a fat section, then it is shown in \cite{Ma} that there is the \emph{fat Weyl group} $W(\Sigma) = N_G(\Sigma)/Z_G(\Sigma)$ that acts on $\Sigma$ with $G(p) \cap \Sigma = W(\Sigma)(p)$ if $p \in \Sigma$, inducing an isometry between the quotients $\Sigma/W(\Sigma) = M/G$. We therefore deduce that $(\Sigma,\f^W)$ is taut if and only if $(M,\f^G)$ is taut.
\end{exl} 

Before we start with the proof of Theorem \ref{thm:B}, we now state a preparing lemma that says that focal points caused by singular leaves do not provide any difficulties when dealing with tautness. This fact was already discussed in \cite{EN}.

\begin{lem}
\label{lem:fiber}
Let $\f$ be a closed singular Riemannian foliation on a complete Riemannian manifold $M$ and let $L \in \f$ be a regular leaf. For every broken horizontal geodesic $c : [0,1] \To M$ from $L$ to a point $q \in M$ that intersects the singular stratum discretely, let $\Delta(c)$ denote the space of broken horizontal geodesics in the path space $\mathcal P(M,L\times q)$ which have the same projection to the quotient $M/\f$ as $c$. Then $\Delta(c)$ carries a smooth structure of a compact (possibly non-connected) manifold of dimension $\sum_{t \in [0,1]}\mr{dim}(\f) - \mr{dim}(L_{c(t)})$ such that the inclusion into the path space $\mathcal P(M,L \times q)$ becomes an embedding. 
\end{lem}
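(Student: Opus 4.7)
The plan is to realise $\Delta(c)$ as an iterated tower of compact fibre bundles, one layer for each intersection of $c$ with the singular stratum, whose fibre dimensions add up to the claimed total. Beyond the setup already fixed, the only inputs needed are the slice theorem for singular Riemannian foliations from Section \ref{sec:SRB} and the smooth dependence of horizontal lifts on initial data provided by the transversal Jacobi equation of Section \ref{sec:HGI}.

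First, I would list the finitely many times $0 \leq t_1 < \cdots < t_r \leq 1$ at which $c$ meets the singular stratum. On each open interval $(t_{i-1},t_i)$ the curve $c$ runs in the regular part $M_0$, where $\pi : M_0 \to M_0/\f$ is a Riemannian submersion and horizontal lifts of $\pi\circ c|_{[t_{i-1},t_i]}$ are uniquely determined by any one of their values. A break of $c$ lying in such an interval happens at a regular point, where the horizontal velocity is forced by its projection and the break is rigid; hence any $c'\in\Delta(c)$ is completely determined on $[t_{i-1},t_i]$ by its value at either endpoint, and intervening regular breaks contribute nothing to the dimension, in agreement with $\mr{dim}(\f)-\mr{dim}(L_{c(t)}) = 0$ at a regular point.

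I would then construct $\Delta(c)$ by backward induction. Set $\Delta^{(r)} = \{q\}$, and given a compact smooth manifold $\Delta^{(i)}$ parametrising broken horizontal geodesics $c':[t_i,1]\to M$ with $\pi\circ c' = \pi\circ c|_{[t_i,1]}$ and $c'(1)=q$, build $\Delta^{(i-1)}$ as follows. For $c'\in\Delta^{(i)}$, the point $p = c'(t_i)$ lies on the singular leaf $L_{c(t_i)}$, and any admissible backward extension of $c'$ across $t_i$ is determined by a horizontal vector $u\in\nu_p(L_{c(t_i)})$ whose image in the tangent cone of $M/\f$ at $\pi(p)$ agrees with the left-tangent of $\pi\circ c$ at $t_i$. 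By the slice theorem, the set $\mathcal O_p$ of such $u$ is the intersection of a leaf of the infinitesimal foliation $\mf F_p$ with the appropriate sphere in $\nu_p(L_{c(t_i)})$: it is a compact smooth submanifold of dimension $\mr{dim}(\f) - \mr{dim}(L_{c(t_i)})$, compactness following from closedness of $\f$ together with the homothety-invariance of $\mf F_p$. As $c'$ ranges over $\Delta^{(i)}$ these fibres $\mathcal O_p$ glue to a compact smooth fibre bundle $\Delta^{(i-1)}\to\Delta^{(i)}$, with smoothness supplied by the smooth dependence of horizontal lifts on basepoint from Section \ref{sec:HGI}. Iterating to $i=0$ yields $\Delta(c) = \Delta^{(0)}$ as a compact smooth manifold of the claimed dimension, and the inclusion into $\mathcal P(M,L\times q)$ is smooth because each broken horizontal geodesic is reconstructed from finite initial data varying smoothly with the bundle parameters, and injective by the uniqueness of horizontal lifts between singular crossings; this makes the inclusion an embedding.

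The main obstacle I expect is the local analysis at a single singular crossing: verifying that $\mathcal O_p$ really is a compact smooth submanifold of the advertised dimension, and that these fibres assemble smoothly as the basepoint $p$ moves on the singular leaf. Both statements reduce, via the slice theorem, to identifying $\mathcal O_p$ with one leaf of the linear foliation $\mf F_p$ on $\nu_p(L_{c(t_i)})$ and then promoting this pointwise description to a smooth bundle along $\Delta^{(i)}$ using the transversal Jacobi equation of Section \ref{sec:HGI}.
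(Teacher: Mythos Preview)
Your plan is correct and coincides with the paper's at the conceptual level: both build $\Delta(c)$ as an iterated tower of compact fibre bundles, one layer for each singular crossing, with fibre dimensions $\mr{dim}(\f)-\mr{dim}(L_{c(t_i)})$ adding up to the claimed total. The difference is in the parametrisation of each layer. The paper works globally inside $\nu(L)$ using the foliation $\mathcal G(L)$ whose leaf $\mathcal L_v$ through $v=\dot c(0)$ consists of all $w$ with $\gamma_w(t)\in L_{\gamma_v(t)}$ for every $t$; the key input, taken from \cite{LT}, is that each map $\eta_i:\mathcal L_v\to L_i$, $w\mapsto\gamma_w(t_i)$, is a surjective submersion, so the twisted product $V_{j+1}=\mathcal L_v\times_\eta V_j$ is automatically a manifold by transversality to the diagonal in $L_{j+1}\times L_{j+1}$. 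Your parametrisation is instead local at each singular leaf, recording the outgoing direction as a leaf of the infinitesimal foliation $\mf F_p$ at the crossing point.

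The two descriptions are equivalent---the paper in fact makes this identification when proving Theorem~\ref{thm:B}---but the paper's global parametrisation pays off precisely at the point you flag as the main obstacle. The smooth assembly of the fibres $\mathcal O_p$ as $p$ ranges over the singular leaf is immediate once $\eta_i$ is known to be a submersion, whereas your appeal to Section~\ref{sec:HGI} does not quite deliver this: the transversal Jacobi equation there controls how horizontal directions vary along a \emph{fixed} geodesic, not how the infinitesimal foliations $\mf F_p$ fit together as $p$ moves on $L_i$. To close that gap cleanly you would in effect rediscover the submersion property of the $\mathcal G(L_i)$-leaves onto $L_i$, at which point the paper's route is the shorter one.
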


\begin{proof}
Given a leaf $L \in \f$ let $\nu^{\varepsilon}(L)$ be a global $\varepsilon$-tube of $L$. Then the pull back of $\f$ by the normal exponential map is invariant under the homotheties $r_{\lambda}(v) = \lambda v$ for all $\lambda \in [-1,1]\setminus \{ 0 \}$, so that there is a unique singular foliation $\mathcal G(L)$ that extends the pull back to $\nu(L)$ satisfying this property. The singular foliation $\mathcal G(L)$ is closed if $\f$ is closed and it is shown in Section 4 of \cite{LT} that $v,w \in \nu(L)$ are in the same leaf of $\mathcal G(L)$ iff $\gamma_w(t) \in L_{\gamma_v(t)}$ for all $t$, where as usual $\gamma_v$ is the unique geodesic with $\dot{\gamma}_v(0) = v$. Let $V$ be a small open neighborhood of $v$ in the leaf $\mathcal L_v$ of $\mathcal G(L)$ through $v$. Then the vector space of variational vector fields of variations through geodesics $\gamma_w$ with $w \in V$ coincides with the space $W^{\gamma_v}$ of $\f$-vertical Jacobi fields along $\gamma_v$, as defined in the last section. Due to \cite{LT}, one has
\begin{equation*}
W^{\gamma_v}(t) = \left\{J(t) \mbox{$|$} J \in W^{\gamma_v}\right\} = T_{\gamma_v(t)}L_{\gamma_v(t)},
\end{equation*}
and we deduce that the map $\eta_t: \mathcal L_v \To L_{\gamma_v(t)}$, given by $\eta_t(w) = \exp(tw) = \gamma_w(t)$, is a submersion for all $t$, which is surjective if $\f$ is closed. In this case, all the preimages $\eta_t^{-1}(p)$ are compact submanifolds of $\mathcal L_v$ of dimension $\mr{dim}(\mathcal L_v) - \mr{dim}(L_{\gamma_v(t)})$. In particular, if $L$ is a regular leaf, the dimension of such a preimage equals the difference $\mr{dim}(\f) - \mr{dim}(L_{\gamma_v(t)})$.\\

We will now describe the compact set $\Delta(c)$ as the total space of an iterated bundle. Since the general case requires no new ideas, but only some more notation, we will assume for the rest of the proof that $c$ as in the claim is smooth. So let $L \in \f$ be a regular leaf and let $\gamma = \gamma_v$ be a horizontal geodesic from $L$ to a point $q \in M$. Let $\gamma^{-1}(M \setminus M_0) = \{ t_i \}_{i = 1,\dots,r}$ with $0 < t_r < \dots < t_1 \leq 1$ denote the times where $\gamma$ crosses the singular stratum and set $L_i = L_{\gamma(t_i)}$ and $v_i = \mr{dim}(\f) - \mr{dim}(L_i)$. Note that if $q$ is a regular point, the vertical index of $\gamma$ is given by $v(\gamma) = \sum_{i=1}^r v_i$. With the notation from above, let $\eta_i: \mathcal L_v \To L_i$ be the surjective submersion defined by $\eta_i(w) = \exp(t_iw)$. Starting with the furthermost singular leaf, we now define $V_1 = \eta_1^{-1}(\gamma(t_1)) \subset \mathcal L_v$ and identify this space with the subspace  
\begin{equation*}
\Delta_1 = \left\{c_w \in \Delta(\gamma) \mbox{$|$} c_w|_{[0,t_1]} = \gamma_w|_{[0,t_1]} \text{ for } w \in V_1 \text{ and } c_w|_{[t_1,1]} = \gamma |_{[t_1,1]} \right\}
\end{equation*}
of $\Delta(\gamma)$ of (at most) once broken geodesics in the obvious way, i.e. by $w \MTo c_w$. With this identification $\Delta_1$ inherits a smooth structure which turns it into an embedded submanifold of $\mathcal P(M,L \times q)$ of dimension $v_1$.\\

At the second step we define $V_2$ to be the twisted product
\begin{equation*}
\mathcal L_v \times_{\eta}V_1 = \left\{(w_2,w_1) \in \mathcal L_v \times V_1 \mbox{$|$} \eta_2(w_2) = \exp(t_2w_1)\right\},
\end{equation*}  
which can be identified with the subspace $\Delta_2$ of $\Delta(\gamma)$ that consists of all (at most) twice broken horizontal geodesics $c_{(w_2,w_1)}$ with $c_{(w_2,w_1)}|_{[0,t_2]} = \gamma_{w_2}|_{[0,t_2]}$ for some element $w_2 \in \mathcal L_v$ and $c_{(w_2,w_1)}|_{[t_2,1]} = c_{w_1}|_{[t_2,1]}$ for some $w_1 \in V_1$. With the induced smooth structure $\Delta_2$ becomes a submanifold of $\mathcal P(M,L \times q)$ with 
\begin{equation*}
\mr{dim}(\Delta_2) = \mr{dim}(\mathcal L_v) + \mr{dim}(V_1) - \mr{dim}(L_2) = v_2 + v_1.
\end{equation*}
Note that all we need to ensure that $\Delta_2$ is a submanifold is the fact that the map $\eta_2: \mathcal L_v \To L_2$ is a submersion, so that $\mathcal L_v \times V_1 \To L_2 \times L_2$ is transversal to the diagonal in $L_2 \times L_2$.\\  

Now assume that for some $r-1 \geq j \geq 1$ we already have defined $V_j$ as a submanifold of dimension $\sum^j_{i = 1}v_i$ of the $j$-fold product $\mathcal L_v^j$ together with an identification $V_j \cong \Delta_j$ given by $(w_j,\dots,w_1) \MTo c_{(w_j,\dots,w_1)}$. Then we inductively define $V_{j+1}$ and $\Delta_{j+1}$ as follows. Set $V_{j+1} = \mathcal L_v \times_{\eta}V_j$, where again the twisted product is defined by
\begin{equation*}
\mathcal L_v \times_{\eta}V_j = \left\{(w_{j+1},w_j,\dots,w_1) \in \mathcal L_v \times V_j \mbox{$|$} \eta_{j+1}(w_{j+1}) = \exp(t_{j+1}w_j)\right\},
\end{equation*} 
which is therefore a submanifold of $\mathcal L_v^{j+1}$ of dimension 
\begin{eqnarray*}
\mr{dim}(V_{j+1}) & = & \mr{dim}(\mathcal L_v) + \mr{dim}(V_j) - \mr{dim}(L_{j+1})\\
                  & = & v_{j+1} + \mr{dim}(V_j)\\
                  & = & \sum_{i = 1}^{j+1}v_i .
\end{eqnarray*}

Finally, define $\Delta_{j+1}$ to be the subspace of $\Delta(\gamma)$ consisting of all (at most) $(j+1)$-fold broken horizontal geodesics $c_{(w_{j+1},w_j,\dots,w_1)}$ such that 
\begin{eqnarray*}
& c_{(w_{j+1},w_j,\dots,w_1)}|_{[0,t_{j+1}]} & =  \gamma_{w_{j+1}}|_{[0,t_{j+1}]} \text{ for some } w_{j+1} \in \mathcal L_v \text{ and }\\
& c_{(w_{j+1},w_j,\dots,w_1)}|_{[t_{j+1},1]} & = c_{(w_j,\dots,w_1)}|_{[t_{j+1},1]} \text{ for some } (w_j,\dots,w_1) \in V_j.
\end{eqnarray*}

 By construction, it is clear that there is a 1:1 correspondence between $(j+1)$-tupels $(w_{j+1},\dots,w_1) \in V_{j+1}$ and paths $c_{(w_{j+1},w_j,\dots,w_1)} \in \Delta_{j+1}$. Moreover, the identification $\Delta_{j+1} \cong V_{j+1}$ (as manifolds) via $(w_{j+1},w_j,\dots,w_1) \MTo c_{(w_{j+1},w_j,\dots,w_1)}$ turns $\Delta_{j+1}$ into a compact submanifold of $\mathcal P(M,L \times q)$ of dimension $\sum_{i=1}^{j+1}v_i$. In particular, this defines a smooth structure for $\Delta_r = \Delta(\gamma)$ with the desired properties.  
\end{proof}

\begin{rem}
\label{rem:fiber}
The assumptions in Lemma \ref{lem:fiber} are adapted to our setting, but the conclusion also holds if the foliation is not closed, or the manifold is not complete. For this fact, because being a manifold is a local property, one only has to localize the arguments given in the proof of the lemma. 
Further, if $c = \gamma$ is smooth, let $W^{\gamma}$ denotes the space of $\f$-vertical Jacobi fields along $\gamma$ (see Section \ref{sec:HGI}) and let $t_1 < \dots < t_r$ be the $W^{\gamma}$-focal times along $\gamma$. Define $W^{\gamma}_i$ to be the space of continuous vector fields $J$ along $\gamma$ such that $J|_{[0,t_i]} \in W^{\gamma}|_{[0,t_i]}$ and $J$ vanishes on $[t_i,1]$. Then by our description in the proof of the lemma, we conclude that the tangent space of $\Delta(\gamma)$ at $\gamma$ is given by
\begin{equation*}
T_{\gamma}\Delta(\gamma) = \bigoplus_{i=1}^r W^{\gamma}_i.
\end{equation*}
\end{rem} 

\noindent As a consequence of Lemma \ref{lem:fiber} using the same notation we reprove the mentioned special case.

\begin{cor}
\label{cor:fiber}
If there are no horizontal conjugate points, i.e. conjugate points for the transversal Jacobi equation, along the horizontal geodesic $\gamma$ and $\gamma(1)$ is not a focal point of $L_{\gamma(0)}$, then $\gamma$ is of linking type (for $E_{\gamma(1)}$) with respect to $\Z_2$.
\end{cor}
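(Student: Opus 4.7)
The plan is to exhibit $(\Delta(\gamma), \iota)$ from Lemma \ref{lem:fiber}, where $\iota$ denotes the inclusion into $\mathcal P(M, L \times q)$ with $L = L_{\gamma(0)}$ and $q = \gamma(1)$, as an explicit linking cycle for $\gamma$. The strategy is modeled on the very last paragraph of the proof of Theorem \ref{thm:A}: match dimensions via the index splitting, identify the tangent space of the candidate cycle with the negative subspace of the Hessian, and then compare cohomology with that of a Morse chart.

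First I would match the dimensions. By the index splitting of Lemma \ref{lem:indexsplitting}, the hypothesis that there are no horizontal conjugate points along $\gamma$ forces $\mr{ind}_{\Lambda^L/W}(\gamma) = 0$, so the Morse index of $\gamma$ as a critical point of $E_q$ equals the vertical index $\mr{ind}_W(\gamma) = v(\gamma) = \sum_{i=1}^r v_i$. By Lemma \ref{lem:fiber}, this is precisely $\dim \Delta(\gamma)$. Moreover every element of $\Delta(\gamma)$ is a broken horizontal geodesic from $L$ to $q$ with the same projection to $M/\f$ as $\gamma$, and since the length of a horizontal geodesic agrees with that of its projection, $E_q$ is constant on $\Delta(\gamma)$, taking there exactly the critical value $\kappa = \|\dot\gamma(0)\|^2$. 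Note also that $q$ is assumed to be a regular point of $L$ in the sense of focality, so $\gamma$ is non-degenerate.

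Next I would identify $T_\gamma \Delta(\gamma)$ with the negative subspace of the Hessian of $E_q$ at $\gamma$. By Remark \ref{rem:fiber},
\begin{equation*}
T_\gamma \Delta(\gamma) = \bigoplus_{i=1}^r W^\gamma_i,
\end{equation*}
where $W^\gamma_i$ is the space of continuous vector fields along $\gamma$ which are $\f$-vertical Jacobi fields on $[0, t_i]$ and vanish on $[t_i, 1]$. An application of Morse's Index Theorem (cf.\ the reference to \cite{S} at the end of the proof of Theorem \ref{thm:A}), in its form producing independent negative directions from broken Jacobi fields between consecutive focal times, shows that this direct sum is $v(\gamma)$-dimensional and that the Hessian of $E_q$ is negative definite on it. Combined with the dimension count from the first step, the projection of $T_\gamma \Delta(\gamma)$ onto the negative eigenspace of the Hessian is then an isomorphism.

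Having achieved this, I would conclude exactly as in the last diagram of the proof of Theorem \ref{thm:A}. A Morse chart at $\gamma$ allows one to deform a neighbourhood of $\gamma$ in $\Delta(\gamma)$ onto the local unstable manifold, yielding the commutative diagram
\[
\begin{xy}
\xymatrix{
\check H^{v(\gamma)}(\mathcal P_{L,q}^{\kappa}, \mathcal P_{L,q}^{\kappa}\setminus\{\gamma\}) \ar[r] \ar[d] & \check H^{v(\gamma)}(\Delta(\gamma), \Delta(\gamma)\setminus\{\gamma\}) \ar[d]^{\cong}\\
\check H^{v(\gamma)}(\mathcal P_{L,q}^{\kappa}) \ar[r] & \check H^{v(\gamma)}(\Delta(\gamma))
}
\end{xy}
\]
in which the right-hand vertical map is the fundamental class isomorphism of the compact $v(\gamma)$-manifold $\Delta(\gamma)$ (valid in $\Z_2$-cohomology without any orientability assumption). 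Hence the composition from the top-left to the bottom-right is nontrivial, so $\gamma$ is of linking type. The main obstacle will be the second step: carefully verifying that the vertical broken-Jacobi-field subspace $\bigoplus_i W^\gamma_i$ really exhausts the negative subspace of the Hessian. It is precisely here that the absence of horizontal conjugate points enters decisively, preventing the transversal summand $\Lambda^L/W$ in the index splitting from contributing further, independent negative directions that $\Delta(\gamma)$ could not account for.
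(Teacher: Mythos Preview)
Your proposal is correct and follows essentially the same route as the paper: use Lemma~\ref{lem:fiber} to produce the compact manifold $\Delta(\gamma)$ of the right dimension, invoke the index splitting (Lemma~\ref{lem:indexsplitting}) together with the hypothesis to identify $\mr{ind}(\gamma)$ with the vertical index, and then appeal to Remark~\ref{rem:fiber} and the closing argument of the proof of Theorem~\ref{thm:A} to see that $\Delta(\gamma)$ is transversal to the ascending cell in a Morse chart, hence deforms onto the descending cell and provides a linking cycle. Your write-up is somewhat more explicit than the paper's (you spell out the diagram and the fundamental-class step), but the underlying argument is the same.
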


\begin{proof}
By Lemma \ref{lem:fiber}, there is a compact manifold $\Delta(\gamma)$ through every regular horizontal geodesic $\gamma$ consisting of broken horizontal geodesics, all having the same length as $\gamma$, and $\mr{dim}(\Delta(\gamma))$ coincides with the vertical index $v(\gamma)$. But by assumption, the index of $\gamma$ is just the vertical index. Moreover, the satement about $T_{\gamma}\Delta(\gamma)$ and the discussion at the end of the proof of Theorem \ref{thm:A} ensures that if we look at a finite dimensional approximation of the path space, $\Delta(\gamma)$ is transversal to the ascending cell in a Morse chart around $\gamma$, so that $\Delta(\gamma)$ can be deformed into the descending cell, hence defines a linking cycle for $\gamma$.
\end{proof}

\begin{proof}[Proof of Theorem \ref{thm:B}]
Let us briefly sketch the idea of the proof.
For $(M,\f)$ and $(M',\f')$ as in the claim let us identify $B = M/\f = M'/\f'$ via an isometry and consider the following diagram
\[
  \begin{xy}
    \xymatrix
    {
      M \ar[dr]_Q & & M' \ar[dl]^{Q'}\\
      & B & 
     }
  \end{xy}
  \]
 Now assume that $\f$ is taut. In order to prove that $\f'$ is taut it suffices to prove that the normal exponential map of a generic leaf of $\f'$ has integrable fibers, by Theorem \ref{thm:A} and our genericity results from Section \ref{sec:SRB}. Let therefore $L' \in \f'$ be a regular leaf without holonomy. In this case, the leaf $L = Q^{-1}(Q'(L')) \in \f$ is a regular leaf without holonomy, too, and its normal exponential map has integrable fibers, by assumption. For $v \in \nu(L)$, let $\Delta_v$ denote the connected component of the fiber through $v$ that contains $v$ and identify it with the manifold of horizontal geodesics from $L$ to $\exp(v)$ which have initial velocity in $\Delta_v$. Now, given a vector $v' \in \nu(L')$ with the same projection to $B$ as $v$, we push $\Delta_v$ down to $B$ and lift it to $M'$ along $Q'$ to obtain a space $\Delta'_{v'}$ of horizontal geodesics that start in $L'$ and end in $\exp(v')$. The observation that the map $\Delta'_{v'} \To \nu(L')$ which assigns to a horizontal geodesic its starting direction provides an integral manifold of the kernel distribution of the normal exponential map of $L'$ through $v'$ then finishes the proof.
 
Having sketched the proof, let us now work out the details. Given a point $p \in M$, the infinitesimal foliation $\mf F_p$ splits as a product foliation $\mf F_p = T_pL_p \times \mf F_p^1$ on the tangent space $T_pM = T_pL_p \times \nu_p(L_p)$ so that we have $T_pM/\mf F_p = \nu_p(L_p)/\mf F_p^1$, which is the tangent space to a local quotient $U/\f$ at $L \cap U$, where $U$ is a distinguished neighborhood of $p$. The map $U/\f \To M/\f$, induced by the inclusion $U \To M$, is a finite-to-one open map, given by the quotient map of the action of a finite group $\Gamma$ of isometries, onto a neighborhood $T^{\varepsilon}/\f$ of $L_p$, where $T^{\varepsilon}$ is a global $\varepsilon$-tube around $L_p$ with the same $\varepsilon$ as in the definition of the distinguished neighborhood $U$ (cf. Section \ref{sec:SRB}). Identifying $\nu^{\varepsilon}(L_p) \cong T^{\varepsilon}$ via the normal exponential map, we see that we can identify the tangent space $T_{L_p}B$ of $B$ at $L_p$ with $(\nu_p(L_p)/\mf F_p^1)/\Gamma$. We therefore define the differential $dQ_p : T_pM \To T_{Q(p)}B$ of the projection $Q: M \To B$ at the point $p$ to be the composition of projections $T_pM \To \nu_p(L_p) \To (\nu_p(L_p)/\mf F_p^1)/\Gamma$ and write $Q_*: TM \To TB$ for the induced map, i.e. $Q_*(v) = dQ_{P(v)}(v)$. We use the analogous notations for $Q': M' \To B$.
 
Since orbifold geodesics coincide if they coincide initially, we deduce from the fact that the set of $\f$-horizontal vectors $v$ in $TM$ with the property that $Q \circ \gamma_v$ is completely contained in the open and dense orbifold part of $B$ has full measure in the subset of all horizontal vectors (cf. \cite{LT}) that given an $\f$-horizontal vector $v$ and an $\f'$-horizontal vector $v'$ with the same projection, i.e. $Q_*(v) = Q'_*(v')$, we have $Q \circ \gamma_v(t) = Q' \circ \gamma_{v'}(t)$ for all $t \in \R$. \\

Now take a regular leaf $L \in \f$ and recall that in this case the foliation $\mathcal G(L)$ on $\nu(L)$ from the proof of Lemma \ref{lem:fiber} is a regular foliation with closed leaves such that the intersection of every leaf $\mathcal L \in \mathcal G(L)$ with any normal space $\nu_p(L)$ is finite. Further, as explained there, for every normal vector $v \in \nu(L)$ the restriction of the normal exponential map to the leaf $\mathcal L_v$ induces a submersion $\eta_v: \mathcal L_v \To L_{\gamma_v(1)}$, so that all the fibers $\eta_v^{-1}(q)$ for $q \in L_{\gamma_v(1)}$ are (unions of) compact submanifolds of dimension $\mr{dim}(\f) - \mr{dim}(L_{\gamma_v(1)})$, since the normal exponential map is proper. In this case, the smooth map $\eta_v^{-1}(q) \To \nu_q(L_{\gamma_v(1)})$, given by $w \MTo \dot{\gamma}_w(1)$, defines a smooth identification of the connected component of $\eta_v^{-1}(q)$ that contains $w$ with the regular leaf $\mathfrak L_{\dot{\gamma}_w(1)}$ of $\mf F_q^1$ through $\dot{\gamma}_w(1) \in \nu_q(L_{\gamma_v(1)})$, where $w \in \eta_v^{-1}(q)$ is any preimage of $q$. Moreover, due to the fact that a regular horizontal geodesic intersects the singular stratum discretely, we  see that for $v \in \nu(L)$ and a horizontal geodesic $\gamma'_{v'}:[0,1] \To M'$ with $Q \circ \gamma_v = Q' \circ \gamma'_{v'}$ their horizontal indices coincide, because the corresponding transversal Jacobi equations coincide along the regular parts. That is to say, the kernel $\mr{ker}((d\exp^{\perp}_M)_v)$ of the differential of the normal exponential map in $v$ contains the subspace $T_v\mathcal L_v \subset T_v\nu(L)$ and the dimension of $\mr{ker}((d\exp^{\perp}_M)_v)/T_v\mathcal L_v$ is independent of the foliation, or to be more precise, an intrinsic datum of the quotient.\\

We now finish the proof as follows. Assume that $\f$ is taut. Combining Lemma \ref{tlem:7} and the proof of Theorem \ref{thm:A}, it remains to prove that for generic leaves $L'$ of  $\f'$ the normal exponential map $\exp_{M'}^{\perp}: \nu(L') \To M'$ has integrable fibers. Thus we can restrict our attention to a regular leaf $L' \in \f'$ without holonomy, i.e. $Q'(L')$ is a manifold point of $B$ and the restriction of $Q'$ to a tubular neighborhood of $L'$ defines a Riemannian submersion. In particular, in this case the leaf $L = Q^{-1}(Q'(L')) \in \f$ is a regular leaf without holonomy, too. Let $v' \in \nu(L')$ be a horizontal vector and let us set $q'= \gamma_{v'}(1)$. Choose an $\f$-horizontal vector $v \in \nu(L)$ with $Q_*(v) = Q'_*(v')$ and set $q = \gamma_v(1)$. Then, by construction, $Q \circ \gamma_v = Q' \circ \gamma_{v'}$ and 
\begin{eqnarray*}
&\mr{dim}(\mr{ker}((d\exp^{\perp}_{M'})_{v'})) - (\mr{dim}(\f') - \mr{dim}(L_{\exp^{\perp}_{M'}(v')}))\\ & = \mr{dim}(\mr{ker}((d\exp^{\perp}_{M})_{v})) - (\mr{dim}(\f) - \mr{dim}(L_{\exp^{\perp}_{M}(v)})).
\end{eqnarray*}

Since $\f$ is taut, the connected component $\Delta_v$ of $(\exp^{\perp}_M)^{-1}(q)$ containing $v$ is a compact submanifold of $\nu(L)$ that is smoothly foliated by the $(\mr{dim}(\f)- \mr{dim}(L_q))$-dimensional regular foliation whose leaf through a horizontal vector $w \in \Delta_v$ is given by $\mathcal N_w = (\exp^{\perp}_M)^{-1}(q) \cap \mathcal L_w$. Again, we can regard $\Delta_v$ as a saturated subset of the regular part of the singular Riemannian foliation $\mf F_q^1$ on $\nu_q(L_q)$ via $d_v : \Delta_v \To \nu_q(L_q)$, defined by the prescription $d_v(w) = d(\exp^{\perp}_M)_w(w)$. Moreover, by our choice of $L'$, the image of the composition $dQ_q \circ d_v$ is completely contained in the manifold part of $T_qM/\mathfrak F_q = \nu_q(L_q)/\mf F_q^1$ (cf. Section 4 of \cite{LT}), so that every leaf of $\mf F_{q'}$ through a vector $w' \in \nu_{q'}(L'_{q'})$ with $dQ'_{q'}(w') \in dQ_q(d_v(\Delta_v))$ is also regular without holonomy. 

If we therefore define $K_{q'} \subset \mf F_{q'}^1$ to be the preimage
\begin{equation*}
K_{q'} = (dQ'_{q'})^{-1}((dQ_q \circ d_v)(\Delta_v)),
\end{equation*}
then $K_{q'}$ is obviously a union of regular leaves of $\mf F_{q'}^1$ without holonomy, namely of leaves of dimension $\mr{dim}(\f') - \mr{dim}(L'_{q'})$, completely contained in a concentric sphere. Further, because $dQ_q(d_v(\Delta_v))$ carries a natural smooth structure that turns it into a ($\mr{dim}(\Delta_v) - (\mr{dim}(\f)- \mr{dim}(L_q))$)-dimensional manifold, and the restriction of $dQ'_{q'}$ to the set of points lying on regular leaves without holonomy is a submersion, the set $K_{q'}$ is a compact submanifold of $\nu_{q'}(L'_{q'})$ of dimension equal to $\mr{dim}(\mr{ker}((d\exp^{\perp}_{M'})_{v'}))$.

Now recall that all the infinitesimal foliations are invariant under all non-zero homotheties. Thus, if we define $\Delta'_{v'} = \left\{-\dot{\gamma}_{w'}(1) \in \nu(L')\mbox{$|$} -w' \in K_{v'}\right\}$, it easily follows from our above discussion that $\Delta'_{v'}$ is a compact submanifold of $(\exp^{\perp}_{M'})^{-1}(q')$ containing $v'$ that satisfies $T_{w'}\Delta'_{v'} = \mr{ker}((d\exp^{\perp}_{M'})_{w'})$ for all $w' \in \Delta'_{v'}$. This proves the claim.                 
\end{proof}    

As already mentioned before, tautness of a submanifold $L \subset M$ requires very special symmetry of the pair $(M,L)$ around the submanifold $L$, what clarifies the fact that there are not many examples of taut submanifolds actually known. By this reason, it is worth mentioning that the ideas of the last proof can be used to construct lots of examples. For this purpose, consider a closed singular Riemannian foliation $\f$ on $M$ such that the space of leaves $M/\f$ is a good Riemannian orbifold $N/\Gamma$. Assume that there is a submanifold $S \subset N$ completely contained in the interior of a fundamental domain of the $\Gamma$-action which we identify with $M/\f$, and consider the saturated preimage $T = Q^{-1}(S)$ that is a union of regular leaves without holonomy. Now let $v \in \nu_p(T)$ be a normal vector to $T$. Then every $\f$-vertical Jacobi field along $\gamma_v$ (cf. Section \ref{sec:HGI}) is also a $T$-Jacobi field along $\gamma_v$, i.e. $W^{\gamma_v} \subset \Lambda^T$, and similar arguments as in the proof of Theorem \ref{thm:B} can be used to see that the multiplicity of $Q_*(v)$ as a focal vector of $S$ in $N$ is the same as the difference of the multiplicity of $v$ as a focal vector of $T$ in $M$ and the number $\mr{dim}(\f) - \mr{dim}(L_{\gamma_v(1)})$. Thus the following lemma is obtained along the same lines as the proof of Theorem \ref{thm:B}.

\begin{lem}
\label{lem:exl}
Let $\f$ be a closed singular Riemannian foliations on a complete Riemannian manifold $M$, such that the space of leaves $M/\f$ is isometric to a quotient $N/\Gamma$, where $N$ is a Riemannian manifold and $\Gamma \subset \mr{Iso}(N)$ is a discrete group of isometries. Let $N_0 \subset N$ denote a fundamental domain of the $\Gamma$-action and identify $N_0 \cong M/\f$. Now assume that $S \subset N$ is a taut submanifold that is completely contained in the interior of $N_0$. Then if $Q: M \To N_0$ denotes the projection, the submanifold $Q^{-1}(S)$ is taut, too.
\end{lem}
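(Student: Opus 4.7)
The plan is to apply Theorem \ref{thm:A} and reduce the tautness of $T := Q^{-1}(S)$ to the integrability of the fibers of the normal exponential map $\exp^{\perp}_M : \nu(T) \to M$, and then to construct the required integral manifolds by lifting those already provided by tautness of $S$ in $N$, in the spirit of the last part of the proof of Theorem \ref{thm:B}. Since $S$ lies in the interior of $N_0 \cong M/\f$, the set $T$ is a union of regular $\f$-leaves without holonomy, hence a smooth submanifold of $M$; the restriction $Q|_T : T \to S$ is a Riemannian submersion whose fibers are those regular leaves, and $dQ: \nu(T) \to \nu(S)$ is a fiber bundle map whose restriction to each fiber $\nu_p(T) \to \nu_{Q(p)}(S)$ is a linear isomorphism. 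In particular $Q \circ \exp^\perp_M = \exp^\perp_N \circ \, dQ$ on $\nu(T)$.

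Fix $v \in \nu(T)$ and, by the genericity arguments of Section \ref{sec:SP} and the density argument closing the proof of Theorem \ref{thm:A}, assume that $q := \exp^\perp_M(v)$ is a regular non-focal point of $T$. Set $v' := dQ(v) \in \nu(S)$ and $q' := Q(q) = \exp^\perp_N(v')$. The multiplicity identity recalled just before the lemma,
\[
\mu(v) \;=\; \mu(v') + \bigl(\mr{dim}(\f) - \mr{dim}(L_q)\bigr),
\]
reduces to $\mu(v) = \mu(v')$ because $q$ is regular, and $dQ$ induces an isomorphism $\mr{ker}((d\exp^\perp_M)_v) \cong \mr{ker}((d\exp^\perp_N)_{v'})$. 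Tautness of $S$ and Theorem \ref{thm:A} supply a compact submanifold $\Delta_{v'} \subset (\exp^\perp_N)^{-1}(q')$ through $v'$ with $T_{w'}\Delta_{v'} = \mr{ker}((d\exp^\perp_N)_{w'})$ for every $w' \in \Delta_{v'}$. I would then set
\[
\Delta_v \;:=\; \bigl\{\, w \in \nu(T) \,\bigm|\, dQ(w) \in \Delta_{v'},\;\; \exp^\perp_M(w) = q \,\bigr\}.
\]

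Because $dQ$ is a submersion and $\exp^\perp_M$ sends each fiber of $dQ$ diffeomorphically onto the $\f$-leaf $L_q$ (a consequence of $Q$ being a Riemannian submersion near $T$ and time-$1$ horizontal translation being a diffeomorphism of $L_q$), the space $\Delta_v$ is a compact smooth submanifold of $(\exp^\perp_M)^{-1}(q)$ of dimension $\mr{dim}(\Delta_{v'}) = \mu(v') = \mu(v)$, i.e.\ of the correct dimension. The tangential identity $T_w \Delta_v = \mr{ker}((d\exp^\perp_M)_w)$ at every $w \in \Delta_v$ follows from the inclusion $W^{\gamma_w} \subset \Lambda^T$ noted before the lemma together with the index splitting of Section \ref{sec:HGI}: in the regular range the horizontal lift of an $S$-Jacobi field along $\gamma_{v'}$ is precisely a $T$-Jacobi field along $\gamma_v$ projecting non-vertically to $\gamma_{v'}$, so that the kernel of $(d\exp^\perp_M)_w$ is exactly the $dQ$-lift of the kernel of $(d\exp^\perp_N)_{dQ(w)}$.

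The main obstacle is to verify this tangential identity globally on $\Delta_v$, rather than only at $v$, in the presence of possible singular strata of $\f$ crossed by the geodesics $\gamma_w$. As in the proof of Theorem \ref{thm:B}, this is handled by the invariance of the infinitesimal foliations $\mf F_q^1$ under non-zero homotheties (so that horizontal lifts of $N$-Jacobi fields remain $M$-Jacobi fields past singular intersections) and by the fact that, because $S$ lies in the interior of $N_0$, the submanifold $\Delta_{v'}$ stays in the regular stratum of the orbifold quotient once $v'$ does. Once the integrability of the fibers of $\exp^{\perp}_M$ is thus established on a dense open subset of $\nu(T)$, Theorem \ref{thm:A} applied to $T$ yields the tautness of $T = Q^{-1}(S)$.
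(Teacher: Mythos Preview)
Your approach is the paper's: the paper gives no separate proof of this lemma and simply states that it ``is obtained along the same lines as the proof of Theorem \ref{thm:B}'', using exactly the two ingredients you isolate---the inclusion $W^{\gamma_v}\subset\Lambda^T$ and the multiplicity identity $\mu(v)=\mu(v')+(\mr{dim}(\f)-\mr{dim}(L_q))$---to transplant the integral manifolds of $\exp^\perp_N$ to $\nu(T)$.

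There is, however, a genuine confusion in your reduction. You assume that $q=\exp^\perp_M(v)$ is a ``regular non-focal point of $T$''; but if $q$ is not $T$-focal then $\mu(v)=0$, the fiber through $v$ is a point, and there is nothing to integrate. The rest of your argument (with $\mu(v)=\mu(v')$ possibly positive) shows that you really mean $q$ to be $\f$-regular while $v$ is allowed to be focal. Even that restriction is unnecessary and forces you into a density argument that Theorem \ref{thm:A}, as stated, does not support: condition 3 there asks for integrability of \emph{all} fibers, not a dense set of them. In fact your formula
\[
\Delta_v=\{\,w\in\nu(T)\mid dQ(w)\in\Delta_{v'},\ \exp^\perp_M(w)=q\,\}
\]
already works for singular $q$. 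The map $\exp^\perp_M|_{dQ^{-1}(w')}$ is not a diffeomorphism onto $L_q$ in that case (as you claim) but the submersion $\eta_1:\mathcal L_w\to L_q$ from the proof of Lemma \ref{lem:fiber}, with fibers of dimension $\mr{dim}(\f)-\mr{dim}(L_q)$. Hence $\Delta_v$ is a compact submanifold of $(\exp^\perp_M)^{-1}(q)$ of dimension $\mu(v')+(\mr{dim}(\f)-\mr{dim}(L_q))=\mu(v)$, and the index splitting of Section \ref{sec:HGI} together with $W^{\gamma_w}\subset\Lambda^T$ identifies $T_w\Delta_v$ with $\ker(d\exp^\perp_M)_w$ for every $w\in\Delta_v$. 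This gives integrability of every fiber of $\exp^\perp_M:\nu(T)\to M$, so Theorem \ref{thm:A} applies directly and no appeal to density is needed.
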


In the case where $M = \R^{n+k}$ is the standard Euclidean space and $\f$ is an $n$-dimensional isoparametric foliation, i.e. the parallel foliation induced by an isoparametric submanifold $L$ of dimension $n$, identify a section $\Sigma$ with the Euclidean space $\R^k$. Then take a small taut submanifold $S \subset \R^k$ completely contained in the interior of a Weyl chamber associated to the finite Coxeter group generated by the reflections across the $L$-focal hyperplanes in $\Sigma$ and consider the $\f$-saturated set $T = \left\{p \in \R^{n+k} \mbox{$|$} L_p \cap S \neq \emptyset \right\}$. Then, due to the last lemma, $T$ is a taut submanifold of $\R^{n+k}$.\\

We finally come to the refined version of Theorem \ref{thm:B}.

\begin{thm}
\label{thm:infpol}
Let $\f$ be a closed singular Riemannian foliation on a complete Riemannian manifold $M$. Then $M/\f$ is an orbifold and $\f$ is taut if and only if the quotient $M/\f$ is a good Riemannian orbifold with a pointwise taut universal covering orbifold.
\end{thm}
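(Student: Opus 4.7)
The plan is to derive Theorem \ref{thm:infpol} as a corollary of Theorem \ref{thm:B} combined with the regular-foliation characterization in Lemma \ref{thm:main}. The orbifold hypothesis is used to pass between the singular foliation $\f$ on $M$ and a suitable regular foliation with isometric leaf space, and once we land in the regular setting the desired description is already available.

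For the easy direction, assume $M/\f$ is a good Riemannian orbifold isometric to $N/\Gamma$, where $N$ is a complete pointwise taut Riemannian manifold and $\Gamma\subset\mr{Iso}(N)$ is discrete. I would consider the orbit foliation $\f_\Gamma$ of the $\Gamma$-action on $N$. Since $\Gamma$ is discrete and $N/\Gamma$ is Hausdorff, the action is properly discontinuous, so the orbits are closed and discrete, and $\f_\Gamma$ is a closed singular Riemannian foliation with $N/\f_\Gamma=N/\Gamma$. Each leaf $\Gamma\cdot p$ is a discrete union of taut points, hence itself taut: the relevant path space decomposes as a disjoint union over $\Gamma$ of path spaces ending at a single taut point, and the homology of a disjoint union splits so that tautness of each component yields tautness of the union. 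Thus $\f_\Gamma$ is a taut foliation with quotient isometric to $M/\f$, and Theorem \ref{thm:B} gives tautness of $\f$. That $M/\f$ is an orbifold is automatic from $M/\f\cong N/\Gamma$.

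For the converse, assume $M/\f$ is an orbifold and $\f$ is taut. Here I would invoke the canonical geometric resolution of an infinitesimally polar foliation recalled in the introduction: there exists a closed \emph{regular} Riemannian foliation $(M',\f')$ on a complete Riemannian manifold $M'$ such that $M'/\f'$ is isometric to $M/\f$. Theorem \ref{thm:B} then transports tautness from $\f$ to $\f'$. Since $\f'$ is regular, Lemma \ref{thm:main} applies and identifies $M'/\f'$ as a good Riemannian orbifold whose universal covering orbifold is a pointwise taut Riemannian manifold. Transporting this description back along the isometry $M'/\f'\cong M/\f$ finishes the proof.

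The main obstacle in this approach is the invocation of the canonical geometric resolution: it relies on an external construction (cited as \cite{L}) and it must be verified that the resolution really provides a closed regular Riemannian foliation on a complete manifold with isometric quotient. Everything else is then formal once Theorem \ref{thm:B} and Lemma \ref{thm:main} are in hand. If one wished to avoid the external reference, an alternative would be to work directly on the universal orbifold cover of $M/\f$, lift $\f$ to a foliation on a suitable branched cover of $M$, and repeat the simple-connectedness/holonomy argument from the proof of Lemma \ref{thm:main}; but this essentially reconstructs the canonical resolution in the present setup.
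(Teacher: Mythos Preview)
Your proof is correct and follows the same overall strategy as the paper: pass to a \emph{regular} Riemannian foliation with isometric leaf space, transport tautness via Theorem~\ref{thm:B}, and then invoke Lemma~\ref{thm:main}. The only genuine difference is the choice of auxiliary regular foliation in the ``only if'' direction. You use Lytchak's canonical geometric resolution from \cite{L}, whereas the paper takes the orthonormal frame bundle $\widehat M$ of the orbifold $M/\f$ with its almost free $O(k)$-action (cf.\ \cite{Hae2}); this is the standard way to realize any Riemannian orbifold as the leaf space of a regular Riemannian foliation. Both choices work. The frame-bundle route is slightly more self-contained, since it avoids the resolution machinery and the verifications you flag (closedness, completeness, isometric quotient) are immediate. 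Your route, on the other hand, makes Theorem~\ref{thm:infpol} and Corollary~\ref{cor:resolution} two faces of the same argument. For the ``if'' direction your explicit use of the $\Gamma$-orbit foliation on $N$ is precisely what the paper's one-line appeal to Theorem~\ref{thm:B} unpacks to.
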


Let us assume that $M/\f$ is a good Riemannian orbifold. Then, by Theorem \ref{thm:B} the foliation $\f$ is taut if and only if the universal covering orbifold, that is a manifold in this case, is pointwise taut. Thus we prove Theorem \ref{thm:infpol} by the observation that in the orbifold case the quotient of a taut foliation is developable.

\begin{lem}
\label{lem:firsthalf}
If $\f$ is a closed and taut singular Riemannian foliation on a complete Riemannian manifold $M$ that has an orbifold quotient, then $M/\f$ is a good Riemannian orbifold. 
\end{lem}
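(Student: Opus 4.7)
The plan is to exploit the canonical geometric resolution of an infinitesimally polar foliation that is mentioned in the introduction (attributed to \cite{L}), and then reduce to the regular case handled by Lemma \ref{thm:main}. Since $M/\f$ is an orbifold by assumption, the foliation $\f$ is infinitesimally polar, and hence there exists a canonically associated regular Riemannian foliation $\f^{*}$ on a complete Riemannian manifold $M^{*}$ whose leaf space $M^{*}/\f^{*}$ is isometric to $M/\f$. This is the key input I would cite rather than reconstruct.

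Once this resolution is in hand, the proof becomes a short chain. First, since $\f$ is taut and the quotients $M/\f$ and $M^{*}/\f^{*}$ are isometric, Theorem \ref{thm:B} (tautness is a property of the quotient) forces $\f^{*}$ to be taut as well. Second, $\f^{*}$ is a regular closed Riemannian foliation on a complete Riemannian manifold, so Lemma \ref{thm:main} applies and yields that $M^{*}/\f^{*}$ is a good Riemannian orbifold with a pointwise taut universal orbifold covering. Third, because $M/\f$ is isometric to $M^{*}/\f^{*}$, the goodness of the target is transferred back to $M/\f$, which finishes the proof.

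The only step that requires any real care is the first one, namely invoking the existence of the canonical resolution with the right properties (regular, complete, closed, with isometric quotient). I would treat it as a black box from \cite{L,LT}, referring the reader there for the construction. The rest is a direct assembly of Theorem \ref{thm:B} with Lemma \ref{thm:main}, so no new geometry is needed; the content of the lemma lies in reducing the orbifold case to the already established regular case via the resolution. As a consequence, Theorem \ref{thm:infpol} follows immediately by combining Lemma \ref{lem:firsthalf} with Theorem \ref{thm:B} and Lemma \ref{thm:main}: if $M/\f$ is an orbifold and $\f$ is taut, the lemma gives goodness, and then Lemma \ref{thm:main} applied to the canonical resolution gives that the universal covering orbifold is pointwise taut; conversely, if $M/\f$ is good with pointwise taut universal cover, Theorem \ref{thm:B} together with Lemma \ref{thm:main} propagates tautness to $\f$.
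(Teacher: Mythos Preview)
Your argument is correct and follows the same high-level strategy as the paper: realize the orbifold $M/\f$ as the leaf space of some \emph{regular} closed Riemannian foliation on a complete manifold, transfer tautness to that foliation via Theorem~\ref{thm:B}, and then invoke the regular case (Lemma~\ref{thm:main} or its proof) to conclude goodness of the quotient. The only genuine difference lies in the choice of this auxiliary regular foliation. The paper uses the classical frame-bundle description of a Riemannian orbifold: $M/\f \cong \widehat M/O(k)$ where $\widehat M$ is the orthonormal frame bundle of the orbifold and the $O(k)$-action is almost free; it then lifts to the universal cover and argues directly (essentially re-running the proof of Lemma~\ref{thm:main}). You instead invoke Lytchak's canonical geometric resolution $(\widehat M,\widehat{\f})$ of the infinitesimally polar foliation $\f$ and cite Lemma~\ref{thm:main} as a black box.

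Both choices work. The frame-bundle route has the advantage of being a standard, self-contained fact about Riemannian orbifolds (cf.~\cite{Hae2}), requiring no input beyond what is already recalled in Section~\ref{sec:SRB}; closedness of the $O(k)$-orbit foliation is automatic since $O(k)$ is compact. Your route is equally valid but leans on the more recent and heavier machinery of~\cite{L}, and you are right to flag that the properties you need (completeness of $\widehat M$, closedness of $\widehat{\f}$, isometric quotient) must be extracted from that reference; in the paper these facts are only summarized \emph{after} the present lemma, so from an expository standpoint the frame-bundle argument is better placed. Mathematically, though, there is nothing to object to.
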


\begin{proof}
It is a well known fact that every Riemannian orbifold is the quotient of a regular Riemannian foliation. For instance, one could take the foliation $\widehat{\f}$ on the manifold $\widehat M$ of orthonormal frames of $M/\f$ induced by the almost free action of $O(k)$, where $k$ is the dimension of the orbifold (cf. \cite{Hae2}). By Theorem \ref{thm:B}, $\f$ is taut if and only if $\widehat{\f}$ is taut and the latter is taut if and only if its lift $\widetilde{\f}$ to the universal covering is taut as we saw above. Hence $\widetilde{\f}$ is taut, too. Since $\widetilde M$ is simply connected, $\widetilde{\f}$ has trivial holonomy (cf. Section \ref{sec:SP}) and therefore, $\widetilde M/\widetilde{\f}$ is a complete Riemannian manifold, which is also simply connected by the exact homotopy sequence. In particular, $\widetilde M/\widetilde{\f} \To M/\f$ is the universal orbifold covering.
\end{proof}

The property that the quotient of a singular Riemannian foliation is a Riemannian orbifold can be described by means of the infinitesimal foliations. Namely, this class of foliations coincides with the class of singular Riemannian foliations whose infinitesimal foliations have sections. Let us recall that a singular Riemannian foliation $(M,g,\f)$ admits \emph{sections} if there exists a complete, immersed submanifold $\Sigma_p$ through every regular point $p \in M$ that meets every leaf and always orthogonally. It is not hard to see that a section is totally geodesic in $M$. As an example, the set of orbits of a polar action is a singular Riemannian foliation admitting sections. Motivated by this example we also speak about \emph{polar foliations}. We end this section with a short recollection of the basic notions for those foliations and reformulate our result about foliations whose quotients are orbifolds.\\

Singular Riemannian foliation with sections are well understood and were studied, for example by Alexandrino and T\"oben. One nice feature of this class is that one can canonically construct a blow up which has the same horizontal geometry (cf. \cite{T}). In \cite{L} it is shown that the existence of such a \emph{geometric resolution} of a singular Riemannian foliation is equivalent to the fact that the foliation carries at the infinitesimal level the information of a singular Riemannian foliation with sections. Such foliations are called \emph{infinitesimally polar} and were first defined and discussed by Lytchak and Thorbergsson in \cite{LT}.

If $\mr{dim}(\f) =n$ and $\mr{dim}(L_p) = r$, then the infinitesimal singular foliation $\mathfrak{F}_p$ (see Section \ref{sec:SP} for the definition) on $T_pM = T_pM_{n-r} \times (T_pM_{n-r})^{\perp}$ is a product $\mathfrak{F}_p^v \times \mathfrak{F}_p^h$, where $\mathfrak{F}_p^v$ is the trivial foliation given by parallels of $T_pL_p$ and the main part $\mathfrak{F}_p^h$ on $(T_pM_{n-r})^{\perp}$ is a singular Riemannian foliation, invariant under rescalings and with the origin as the only $0$-dimensional leaf. Thus $\mathfrak{F}_p^h$ is the cone over a foliation of dimension $n-r$ on the unit sphere of $T_pM_{n-r}^{\perp}$, which is induced by the intersections of the nearby higher dimensional leaves with a slice through $p$. In particular, the foliation $\mf F_p$ is polar if and only if its factor $\mf F_p^h$ is polar.

\begin{exl}
Let $(M^m,g)$ be a complete, simply connected Riemannian manifold with a closed singular Riemannian foliation $\f$ of codimension 2 and $M/\f = S^2/\Gamma$. Then $\f$ is infinitesimally polar and $\Gamma$ is a finite Coxeter group. Further, $\f$ is taut and therefore has no exceptional leaves. Let $L \in M/\f$ be a point of codimension $2$, i.e. a corner. Take a point $p \in L^{m-k}$ and consider the infinitesimal singular Riemannian foliation $\mathfrak F_p^h$ on $(\nu_p(L),g_p)$. Since $L$ has codimension 2 in $M/\f$, the singular Riemannian foliation $\mathfrak F_p^h$ is the cone foliation over a singular Riemannian foliation of codimension 1 on the unit sphere $S^{k-1}$ in $\nu_p(L)$. By a result of M\"unzner (see \cite{Mu1}, \cite{Mu2}), one therefore has $S^{k-1}/\mathfrak F_p^h = I_d$ for an interval $I_d$ with length $|I_d| = \pi/d$ and $d \in \{1,2,3,4,6\}$, i.e. $\nu_p(L)/\mathfrak F_p^h$ is an open cone over $I_d$ with angle $\pi/d$. Note that to obtain a local isometry between $\nu_p(L)/\mathfrak F_p^h$ and $U/\f$ for a neighborhood $U$ around $p$, indeed one has to change the metric on $\nu_p(L)$, but this has no influence on the possible values of the angle, because the metrics coincide in $0$. Now the finite to one mapping $U/\f \To M/\f$ between the local and global quotient is given by the quotient $(U/\f)/W$, where $W$ is a group acting on $U$ by isometries. But the absence of exceptional leaves implies that $W$ acts trivially, so that a neighborhood of $L$ in $M/\f$ is isometric to $U/\f$. It follows by the known classification of $S^2/\Gamma$ that the quotient $M/\f$ is either the whole sphere $S^2$, the hemisphere $S^2/\Z_2$, a sickle $S^2/D_i$ with $i \in \{2,3,4,6\}$, or it is a spherical triangle with angles $(\pi/n_1,\pi/n_2,\pi/n_3)$ and $(n_1,n_2,n_3) \in \{(2,2,2),(2,2,3),(2,2,4),(2,2,6),(2,3,3),(2,3,4)\}$
\end{exl}

If $\iota: \Sigma \To M$ is a section of $\f$ then $d\iota_p(T_p\Sigma)$ is a section of $\mathfrak{F}_{\iota(p)}$. Thus a singular Riemannian foliation with sections is infinitesimally polar. Conversely, in the general case, a section $\Sigma$ of $\mathfrak{F}_p$ cannot be realized as the tangent space of a local section, because this is equivalent to the fact that the horizontal distribution over the regular stratum given by $\mathcal{H} = \bigcup_{p \in M_0} (T_pL_p)^{\perp_{g_p}}$  is integrable, what, under the assumption of completeness of $M$, is equivalent to existence of sections (cf. \cite{A}). A well known example of an infinitesimally polar singular Riemannian foliation that is not polar is given by the fibers of the Hopf fibration $S^1 \hookrightarrow S^3 \To S^2(\frac{1}{2})$.\\ 

In \cite{LT} Lytchak and Thorbergsson proved the following\\

\noindent \textbf{Theorem} (Lytchak and Thorbergsson, 2010) 
\emph{The following are equivalent:
\begin{enumerate}
\item The infinitesimal singular Riemannian foliation $\mathfrak{F}_p$ is polar;
\item $\f$ is locally closed at $p$ and a local quotient $U/\f$ of a neighborhood $U$ of $p$ is a Riemannian orbifold.\\
\end{enumerate}}

In fact, in \cite{LT} it is shown that the statements above are equivalent to the non-explosion of the curvature in the local quotients as one approaches a boundary point $p$ of $M_0$.\\

Using the description of Lytchak and Thorbergsson and Theorem \ref{thm:infpol} we obtain 

\begin{cor}
\label{cor:infpol}
Let $\f$ be a closed singular Riemannian foliation on a complete Riemannian manifold $M$. Then $\f$ is infinitesimally polar and taut if and only if the quotient $M/\f$ is a good Riemannian orbifold with a pointwise taut universal covering orbifold.
\end{cor}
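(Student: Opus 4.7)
The plan is essentially to combine Theorem \ref{thm:infpol} with the cited characterization of infinitesimally polar foliations due to Lytchak and Thorbergsson. The key observation is that, by their theorem, $\mathcal F$ is infinitesimally polar (i.e.\ every infinitesimal foliation $\mathfrak F_p$ is polar) if and only if every local quotient $U/\mathcal F$ is a Riemannian orbifold. Since being a Riemannian orbifold is a local property, this is equivalent to the global quotient $M/\mathcal F$ being a Riemannian orbifold. So the condition ``infinitesimally polar'' in the corollary can be replaced throughout by the condition ``$M/\mathcal F$ is a Riemannian orbifold'', and the corollary reduces to Theorem \ref{thm:infpol}.

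More concretely, for the forward direction I would assume that $\mathcal F$ is infinitesimally polar and taut. By the Lytchak--Thorbergsson theorem the quotient $M/\mathcal F$ is a Riemannian orbifold, so the hypotheses of Theorem \ref{thm:infpol} are met and we conclude that $M/\mathcal F$ is a good Riemannian orbifold whose universal covering orbifold is pointwise taut.

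For the converse direction, assume that $M/\mathcal F$ is a good Riemannian orbifold with a pointwise taut universal covering orbifold. Then in particular $M/\mathcal F$ is a Riemannian orbifold, so by the Lytchak--Thorbergsson theorem $\mathcal F$ is infinitesimally polar. Moreover, the hypotheses of Theorem \ref{thm:infpol} again yield that $\mathcal F$ is taut, which completes the equivalence.

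There is no real obstacle here beyond correctly invoking the two earlier results; the content of the corollary is purely a matter of translating between the two equivalent descriptions (infinitesimal polarity versus orbifold quotient) and then quoting Theorem \ref{thm:infpol}. The only thing to be a bit careful about is noting explicitly that the Lytchak--Thorbergsson statement, which is formulated locally at a point $p$, globalizes: since it characterizes the orbifold property of local quotients $U/\mathcal F$ around each point, and $M/\mathcal F$ is an orbifold exactly when all such local quotients are, the global equivalence between infinitesimal polarity of $\mathcal F$ and the orbifold property of $M/\mathcal F$ follows.
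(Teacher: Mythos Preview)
Your proposal is correct and matches the paper's approach exactly: the paper simply states that the corollary follows by combining the Lytchak--Thorbergsson characterization of infinitesimally polar foliations with Theorem \ref{thm:infpol}, which is precisely what you do.
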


If the infinitesimal singular Riemannian foliation $\mathfrak{F}_p$ on $(T_pM,g_p)$ is polar, $\mathfrak{F}_p$ is an isoparametric foliation given by the parallel foliation induced by a regular and hence isoparametric leaf. Since isoparametric foliations on a Euclidean space are taut (see \cite{HPT}), the infinitesimal foliation $\mathfrak{F}_p$ is taut in this case. The next lemma states that this is always true.

\begin{lem}
\label{lem:inftaut}
Let $\f$ be a closed singular Riemannian foliation on a complete Riemannian manifold $(M,g)$. If $\f$ is taut, then for every $p \in M$, the infinitesimal foliation $\mf F_p$ on $(T_pM,g_p)$ is taut.
\end{lem}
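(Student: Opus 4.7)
My plan is to invoke Theorem~\ref{thm:A} and to transfer the integrability of the fibers of the normal exponential map from $\f$ on $(M,g)$ to $\mathfrak{F}_p$ on $V:=(T_pM,g_p)$ via a rescaling argument that exploits the homothety invariance of $\mathfrak{F}_p$. Denote by $\eta:\nu(\tilde L)\to V$ the normal exponential map, with respect to $g_p$, of a leaf $\tilde L\subset V$ of $\mathfrak{F}_p$. By Theorem~\ref{thm:A} it suffices to show that for every such $\tilde L$ and every $v\in\nu(\tilde L)$, the fiber $\eta^{-1}(\eta(v))$ is a union of submanifolds in a neighborhood of $v$. This is a local condition, and since $\mathfrak{F}_p$ is invariant under the homotheties $r_\lambda(w)=\lambda w$ (which intertwine with $\eta$ up to scaling), the integrability at $v$ is equivalent to that at $r_\lambda v$ for every $\lambda>0$. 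Choosing $\lambda$ small enough, we may assume that $v$, its footpoint, and its image all lie in a ball $B\subset V$ around the origin on which $\exp_p$ is a diffeomorphism identifying $\mathfrak{F}_p|_B$ with $\f$ near $p$.

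For $\lambda\in(0,1]$, set $F_\lambda:=\exp_p\circ r_\lambda:V\to M$ and let $\tilde h_\lambda:=\lambda^{-2}F_\lambda^*g$ denote the pulled-back metric, which is defined on $r_{1/\lambda}(B)$. Since $\mathfrak{F}_p$ is homothety invariant and coincides with $\exp_p^*\f$ on $B$, one has $F_\lambda^*\f=\mathfrak{F}_p$ on $r_{1/\lambda}(B)$; moreover $F_\lambda$ is an isometric embedding of $(r_{1/\lambda}(B),\tilde h_\lambda)$ into $(M,\lambda^{-2}g)$. Tautness is invariant under scaling of the metric (geodesics and normal bundles are unchanged, so the Morse structure of the energy functional is preserved), hence by Theorem~\ref{thm:A} the normal exponential map of every leaf of $\f$ in $(M,\lambda^{-2}g)$ has integrable fibers. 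Pulling back via $F_\lambda$, the $\tilde h_\lambda$-normal exponential map of $\tilde L$ in $V$ has integrable fibers on $r_{1/\lambda}(B)$, and in particular near $v$. As $\lambda\to 0$, the metrics $\tilde h_\lambda$ converge to $g_p$ in $C^\infty_{\mathrm{loc}}$, and so do the associated normal bundles, normal exponential maps, and their differentials; the union-of-submanifolds description of the fibers should then be inherited in the limit by $\eta^{-1}(\eta(v))$ near $v$, giving tautness of $\mathfrak{F}_p$ by Theorem~\ref{thm:A}.

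The main obstacle is the last step, namely the stability of the integrable-fiber property under the smooth convergence $\tilde h_\lambda\to g_p$. The kernel distribution of the differential of a normal exponential map does not have constant rank---it jumps precisely at focal vectors---so a direct Frobenius-type continuity argument does not apply. The passage to the limit must be handled pointwise: at non-focal points the normal exponential maps are local diffeomorphisms and their discrete fibers persist, whereas at a focal vector the tangent space to the fiber submanifold is the kernel of the differential of the normal exponential map, whose dimension is the focal multiplicity. This multiplicity is upper semi-continuous in the data and is locally constant on the regular focal set (Warner--Hebda, as recalled in the proof of Theorem~\ref{thm:A}). Combining this with the $C^\infty$-convergence of the $\tilde h_\lambda$-Jacobi equations along any given geodesic to the $g_p$-Jacobi equation should yield the required continuity of the fiber-submanifold structure, completing the argument.
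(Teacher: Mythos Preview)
Your overall strategy---rescale via the homothety invariance of $\mathfrak{F}_p$ and pass to the flat limit $g_p$---is exactly the paper's. The difference is in how the limit step is executed, and this is where your proposal has a genuine gap.

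You choose to carry the integrable-fibers characterization (Theorem~\ref{thm:A}) through the limit $\tilde h_\lambda\to g_p$. You correctly identify this as the obstacle, and your sketch does not close it. The kernel distribution of $d\eta$ has jumping rank, focal multiplicities are only upper semi-continuous, and the Warner--Hebda regularity you invoke controls at best the \emph{tangent spaces} of the fibers on a dense open set; it says nothing about whether the fibers themselves remain compact connected submanifolds in the limit. The phrase ``should then be inherited'' is precisely where the argument is missing, and I do not see how to make it work directly at the level of integrable fibers.

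The paper sidesteps this by switching to the Morse-theoretic picture at the limit step. Having established tautness for the rescaled metrics $h^\lambda$ (equivalently, your $\tilde h_\lambda$), one knows the corresponding energy functionals are perfect Morse functions for generic endpoints. These converge in $C^\infty$ to the $g_p$-energy functionals, and the paper uses that a $C^\infty$-limit of perfect Morse functions which is itself Morse must be perfect---a robust statement, since perfectness is a homological condition on fixed sublevel sets, which vary continuously. Combined with the genericity results of Section~\ref{sec:SP}, this gives tautness of $\mathfrak{F}_p$. You could repair your argument the same way: once you have $\tilde h_\lambda$-tautness, abandon the integrable-fibers formulation and pass to the limit via perfect Morse functions instead.
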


\begin{proof}
We will use the notation from the beginning of this section. Take a point $p \in M$ with $d = \mathrm{dim}(L_p)$ and $k = \mr{codim}(M_{n-d})$. Then we have an orthogonal splitting $T_pM = T_pM_{n-d} \oplus \nu_p(M_{n-d})$ that induces a splitting of $\mathfrak{F}_p$ into a product foliation $\mathfrak{F}_p = \mf F_p^v \times \mathfrak{F}_p^h$, where the first factor is the foliation given by parallels of $T_pL_p$ and the main part $\mf F_p^h$ is the cone-foliation, i.e. invariant under all homotheties $r_{\lambda}(v) = \lambda\cdot v$, of a singular Riemannian foliation with compact leaves of dimension at least one on the unit sphere in $\nu_p(M_{n-d})$ (if $\nu_p(M_{n-d}) \neq \left\{0\right\}$).
Since the path spaces of these product submanifolds are the products of the corresponding path spaces in the factors and the critical points are exactly the tupels of critical points in these factors, so that the critical data behave additive, we conclude that 
$\mathfrak{F}_p$ is taut if and only if $\mathfrak{F}_p^h$ is taut, because $T_pL_p$ is contractible and $\mf F_p^v$ is the trivial foliation by parallels of $T_pL_p$. Further, because $\mf F_p$ as well as the set of straight lines in $T_pM$ is invariant under homotheties it follows that $\mf F_p$ is taut if and only if the restricted foliation $\mf F_p^h|_D$ is taut, where $D$ is a small ball in $\nu_p(M_{n-d})$ around the origin. Note that such a ball $D$ is always saturated. Let $U$ be a distinguished tubular neighborhood around $p$ and set $V = \phi(U)$ and $h = \phi_*\ g$, where $\phi : U \To T_pM$ with $\phi_*(\f|_U) = \mf F_p|_V$ is an embedding as in the definition of the infinitesimal foliation at $p$. Now with respect to the metric $h$ and for a small ball $D$ around the origin in $\nu_p(M_{n-d})$, the closed singular Riemannian foliation $\mf F_p|_{(T_pM_{n-d} \times D) \cap \phi(U')}$ is taut, i.e. the saturation of $\f|_{\phi^{-1}(D)}$ in $U'$ is taut, where $U' \subset U$ is a smaller distinguished tubular neighborhood at $p$ that contains $\phi^{-1}(D)$. To see this, we can choose $U'$ so small that we have to consider only critical points $\gamma$ in $U'$ with energy $r$ such that the whole ball of radius $r$ around $\gamma(1)$ is contained in $U$, so that we have 
\begin{equation*}
P(U,L_{\gamma(0)} \cap U \times \gamma(1))^r = P(M,L_{\gamma(0)} \times \gamma(1))^r
\end{equation*}
\noindent  and we conclude by tautness of $\f$ that all the local unstable manifolds can be completed in $U$ below the energy $r$. Thus, since $U$ and $U'$ can be assumed to be diffeomorphic, the local unstable manifolds can also be completed in $U'$, what implies our claim. If we now consider the blow up metrics $h^{\lambda}$ on $V^{\lambda} = \left\{\lambda v \mbox{$|$} v \in V\right\}$ defined by
\begin{equation*}
h^{\lambda} = \lambda^2\cdot (r_{\lambda})_*\ h,
\end{equation*}
\noindent it follows that our restricted foliation is also taut with respect to the metrics $h^{\lambda}$. But the constant metric $g_p$ is just the flat limit $\lim_{\lambda \to \infty}h^{\lambda}$ and we deduce that $\mf F_p$ is taut with respect to $g_p$, because it is not hard to see that if a sequence of perfect Morse functions converge to a Morse function, this limit has to be perfect. This together with our genericity results from Section \ref{sec:SP} finish the proof.        

\end{proof}

In the standard picture of an isometric action of a Lie group $G$ on a Riemannian manifold $M$, one could ask if there exists another Riemannian manifold $(\widehat{M},\hat{g})$, canonically related to $M$, on which $G$ acts by isometries in such a way that all orbits of this action have the same dimension, because the singular leaves are the main source of difficulties. If one additionally tries to resolve the action in a way that preserves the horizontal geometry, there was no such general construction known before the work of Lytchak \cite{L}, who came up with a canonical resolution preserving the transverse geometry.

As the main result in \cite{L}, Lytchak gave an equivalent characterization of the infinitesimally polar foliations as exactly those foliations that admit a \emph{geometric resolution}, where he defined a geometric resolution of $(M,g,\f)$ to be a smooth and surjective map $F: \widehat{M} \To M$ from a Riemannian manifold $(\widehat{M},\hat{g})$ with a regular foliation $\widehat{\f}$ such that the following holds true: For all smooth curves $c$ in $\widehat{M}$ the transverse lengths of $c$ with respect to $\widehat{\f}$ and of $F(c)$ with respect to $\f$ coincide.
The transverse length of a smooth curve $c : [a,b] \To M$ is defined as the length of the projection to local quotients
\begin{equation*}
L_T(c) = \int_a^b \|P_{c(t)}(\dot{c}(t))\| dt,
\end{equation*}
where $P_q : T_qM \To (T_qL_q)^{\perp}$ denotes the orthogonal projection. In particular, $F$, as in the definition of a geometric resolution, sends leaves of $\widehat{\f}$ to leaves of $\f$, such a map is called \emph{foliated}, and induces a length preserving map between the quotients.\\

In \cite{L} Lytchak proved\\ 

\noindent \textbf{Theorem} (Lytchak, 2010) 
\emph{A singular Riemannian foliation $(M,\f)$ has a geometric resolution if and only if $\f$ is infintesimally polar. If $\f$ is infinitesimally polar, then there is a canonical resolution $F: \widehat{M} \To M$ satisfying $\mr{dim}(\widehat M) = \mr{dim}(M)$ and such that $F$ induces a diffeomorphism between the regular part $M_0$ and $F^{-1}(M_0)$ as well as an isometry between the quotients $M/\f$ and $\widehat M/\widehat{\f}$.
If $\f$ is given by the orbits of a group $G \subset \mr{Iso}(M)$, then $G$ acts by isometries on $\widehat{M}$ and $\widehat{\f}$ is the associated orbit foliation. If $M$ is compact or complete, then $\widehat M$  has the respective property .}\\

Note that horizontal in $(M,\f)$ and $(\widehat M,\widehat{\f})$ with the same projection to the common quotient have the same horizontal index (cf. Section \ref{sec:HGI}). \\ 

Let us now say some words about his canonical resolution. Recall that the \emph{Grassmannian bundle} $\mathfrak{G}_k(M)$ of a given manifold $M^{n+k}$ consists fiberwise of the Grassmaniann manifolds 
\begin{equation*}
G_k(T_pM) = \left\{ \sigma \subset T_pM \mbox{$|$} \sigma \text{ is a $k$-plane} \right\}
\end{equation*}
 of the $k$-dimensional linear subspaces of the tangent space $T_pM$, that is to say $\mathfrak{G}_k(M) = \bigcup_{p \in M} G_k(T_pM)$. For a detailed discussion of the Grassmanian bundle with its natural metric we refer the reader to \cite{W}.
\newline 
 
For a singular Riemannian foliation $\f$ of codimension $k$ on $M$ which has sections, Boulam defined in \cite{Bou} the set \begin{equation*}
\widehat{M}' = \left\{ T_p\Sigma \mbox{$|$}  \text{$\Sigma$ is a section through $p$} \right\}
\end{equation*}
 of the Grassmannian bundle. Let $P : \widehat{M}' \To M$ denote the restriction of the canonical map $\mathfrak{G}_k(M) \To M$. Boualem constructed a differentiable structure on $\widehat{M}'$ and showed that there is some Riemannian metric on $N$ such that the lifted partition $\widehat{\f}' = \left \{ P^{-1}(L) \mbox{$|$} L \in \f \right \}$ becomes a regular Riemannian foliation on $\widehat{M}'$. The foliation $\widehat{\f}'$ is called the \emph{blow up of} $\f$.
 
In \cite{T} T\"oben proved this result again with another technique and gives the following amplification:
If we denote by $h$ the natural Riemannian metric on $\mathfrak{G}_k(M)$ and by $\hat{g}' = \iota^*h$ the pull back on $\widehat{M}'$, then the pair $(\widehat{\f}',\widehat{\f}'^{\perp})$ is a bi-foliation on $\widehat{M}'$ with a Riemannian foliation $\widehat{\f}'$ and totally geodesic foliation $\widehat{\f}'^{\perp}$.
\newline

Therefore, in some sense, the sections of $\f$ play the role of a global benchmark and give rise to a resolution of the singularities. In \cite{L} Lytchak generalized this construction replacing $\widehat{M}'$ by 
\begin{equation*}
\widehat{M} = \left\{ \Sigma \subset T_pM \mbox{$|$} \text{$\Sigma$ is a section of $\mathfrak{F}_p$ through $0$} \right\}
\end{equation*}
 to infinitesimally polar singular Riemannian foliations. There is a unique Riemannian metric $\widehat g$ on $\widehat M$ such that its restriction to $T\widehat{\f} = T((P|_{\widehat M})^*\f)$ coincides with the restriction of the canonical metric on the Grassmannian bundle and $\nu_{\Sigma}(\widehat L_{\Sigma})$ is isometrically identified with $\Sigma$, where $\Sigma$ is a section of $\mf F_{P(\Sigma)}$. If we denote the restriction $P|_{\widehat M}$ by $F$, then $F: (\widehat M,\widehat F,\widehat g) \To (M,\f,g)$ is the canonical geometric resolution. Thus what is really needed for such a resolution is just the infinitesimal geometric information of a singular Riemannian foliation with sections, but not the actual existence of sections.\\
 
Using Lythak's characterization of infinitesimally polar foliations and Theorem \ref{thm:B} we therefore obtain

\begin{cor}
\label{cor:resolution}
Let $\f$ be a closed and infinitesimally polar singular Riemannian foliation on a complete Riemannian manifold $M$ and let $(\widehat M, \widehat{\f})$ denote the canonical geometric resolution. Then $\widehat{\f}$ is taut if and only if $\f$ is taut.
\end{cor}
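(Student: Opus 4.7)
The plan is to derive this as an immediate consequence of Theorem \ref{thm:B}. Recall that by Lytchak's theorem (quoted just before the corollary) the canonical geometric resolution $F:\widehat M\to M$ has the property that it induces an \emph{isometry} between the quotients $\widehat M/\widehat{\f}$ and $M/\f$. This is the single key input: tautness is, by Theorem \ref{thm:B}, a property of the leaf space, so two foliations with isometric quotients are tautness-equivalent.

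First I would verify that $(\widehat M,\widehat{\f})$ satisfies the hypotheses of Theorem \ref{thm:B}. Completeness of $\widehat M$ is part of Lytchak's theorem (``If $M$ is compact or complete, then $\widehat M$ has the respective property''). Closedness of $\widehat{\f}$ follows from closedness of $\f$: since $F$ is a foliated map sending leaves of $\widehat{\f}$ onto leaves of $\f$, and since the induced map $\widehat M/\widehat{\f}\to M/\f$ is an isometry (hence in particular a homeomorphism of quotient spaces), a leaf $\widehat L\in\widehat{\f}$ is closed in $\widehat M$ iff its image class in $\widehat M/\widehat{\f}$ is closed iff the corresponding class in $M/\f$ is closed iff the corresponding leaf in $\f$ is closed. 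Thus $\widehat{\f}$ is a closed (in fact regular) Riemannian foliation on the complete Riemannian manifold $\widehat M$.

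Having checked the hypotheses, Theorem \ref{thm:B} applied to $(M,\f)$ and $(\widehat M,\widehat{\f})$ yields immediately that $\f$ is taut if and only if $\widehat{\f}$ is taut, which is the claim.

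There is no substantive obstacle here: the corollary is essentially a ``tautness is invariant under canonical geometric resolution'' statement, and the whole point of Theorem \ref{thm:B} is that one can freely pass between singular Riemannian foliations with the same quotient. The only conceptual ingredient one needs beyond citing the two earlier theorems is to remember that the quotient isometry identifies not only the metric data but also the (infinitesimal) horizontal geometry, so that all focal-index considerations used in the proof of Theorem \ref{thm:B} transfer between $(M,\f)$ and $(\widehat M,\widehat{\f})$ without any extra work.
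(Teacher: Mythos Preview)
Your proposal is correct and follows essentially the same approach as the paper: the corollary is stated there as an immediate consequence of Lytchak's theorem (isometric quotients) together with Theorem~\ref{thm:B}, with no further proof given. Your additional verification that $\widehat{\f}$ is closed and $\widehat M$ is complete is a reasonable hygiene check that the paper leaves implicit.
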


\end{document}